\crefname{hypothesis}{Hypothesis}{Hypotheses}
\def\vavg#1{\{\!\!\{#1\}\!\!\}}
\newcommand{\triplenorm}[1]{%
	\left\vert\kern-0.9pt\left\vert\kern-0.9pt\left\vert #1
	\right\vert\kern-0.9pt\right\vert\kern-0.9pt\right\vert}
\title{Discontinuous Galerkin Methods for Magnetic Advection-Diffusion Problems
\thanks{
The work of Shuonan Wu is supported in part by the National Natural
Science Foundation of China grant No. 11901016.}
}
\author{
Jindong Wang\thanks{School of Mathematical Sciences, Peking
University, Beijing 100871, China} (\email{jdwang@pku.edu.cn}). \and 
Shuonan Wu\thanks{School of Mathematical Sciences, Peking University,
Beijing 100871, China (\email{snwu@math.pku.edu.cn}, 
\url{http://dsec.pku.edu.cn/\~snwu}).}
}
\begin{document}

\maketitle

\begin{abstract}
We devise and analyze a class of the primal discontinuous Galerkin methods for the magnetic advection-diffusion problems based on the weighted-residual approach. In addition to the upwind stabilization, we find a new mechanism under the vector case that provides more flexibility in constructing the schemes. 
For the more general Friedrichs system, we show the stability and optimal error estimate, which boil down to two core ingredients -- the weight function and the special projection -- that contain information of advection. Numerical experiments are provided to verify the theoretical results.
\end{abstract}

\begin{keywords}
discontinuous Galerkin, magnetic advection-diffusion, degenerate
Friedrichs system, upwind scheme, inf-sup condition
\end{keywords}

\begin{AMS}
  65N60, 65N12
\end{AMS}

\section{Introduction} \label{sc:intro}

Let $\bm{f} \in \bm{L}^2(\Omega)$, $\bm{g}_D \in \bm{H}^{3/2}(\Gamma_D)$, $\bm{g}_N \in \bm{H}^{1/2}(\Gamma_N)$. We consider the following magnetic advection-diffusion problem: 
\begin{equation} \label{eq:Hcurl-cd}
\left\{
\begin{aligned}
\nabla \times (\varepsilon \nabla \times \bm{u} ) - \bm{\beta} \times
  (\nabla \times \bm{u}) + \nabla (\bm{\beta} \cdot \bm{u}) +  \gamma
  \bm{u}&= \bm{f} ~~\quad \text{in }\Omega, \\
\bm{n} \times \bm{u} + \chi_{\Gamma_D^-} (\bm{u} \cdot \bm{n}) \bm{n}
  &= \bm{g}_D \quad \text{on } \Gamma_D, \\
\varepsilon \bm{n} \times (\nabla \times \bm{u}) + \chi_{\Gamma_N^-}
  (\bm{\beta} \cdot \bm{n}) \bm{u} &= \bm{g}_N \quad \text{on }
  \Gamma_N,
\end{aligned}
\right.
\end{equation}
where $\Omega \subset \mathbb{R}^3$ is a bounded, convex, polygonal domain, $\Gamma_D\neq \emptyset$ and $\Gamma_N$ are the parts of the boundary $\Gamma=\partial\Omega$ where Dirichlet and Neumann boundary
conditions are assigned. The velocity field is assumed to be $\bm{\beta}({x}) \in \bm{W}^{1,\infty}(\Omega)$.  $\gamma({x}) \in L^\infty(\Omega)$ is the reaction coefficient, and $\varepsilon$ is a positive constant diffusivity coefficient. The magnetic advection is known as the Lie derivative
\begin{equation} \label{eq:Lie-advection}
L_{\bm{\beta}} \bm{u} := - \bm{\beta} \times (\nabla \times \bm{u}) +
  \nabla (\bm{\beta} \cdot \bm{u}).
\end{equation}
The inflow and outflow parts of $\Gamma = \partial \Omega$ are defined in the usual fashion:
$$ 
\Gamma^- := \{x \in \Gamma: ~ \bm{\beta}(x) \cdot \bm{n}(x) < 0\}, \quad 
\Gamma^+ := \{x \in \Gamma: ~ \bm{\beta}(x) \cdot \bm{n}(x) \geq 0\},
$$ 
where $\bm{n}(x)$ denotes the unit outward normal vector to $\Gamma$
at $x \in \Gamma$. We further denote $\Gamma_D^\pm := \Gamma_D \cap \Gamma^\pm$,
$\Gamma_N^\pm := \Gamma_N \cap \Gamma^\pm$.

The model problem \eqref{eq:Hcurl-cd} arises in many important applications, especially the magnetohydrodynamics \cite{gerbeau2006mathematical}. For instance, it can model the behavior of electromagnetic fields in the stationary flow field of a conducting fluid \cite{heumann2011eulerian, heumann2013stabilized}, where $\bm{u}$ stands for the magnetic vector potential.

One of the main numerical challenges for \eqref{eq:Hcurl-cd} is the stability with respect to $\varepsilon$ for the {\it advection-dominated} case, i.e., $\varepsilon \ll |\bm{\beta}|$. Such an issue becomes more apparent and well-studied for the closely related scalar advection-diffusion problem 
\begin{equation} \label{eq:scalar-cd}
-\nabla\cdot (\varepsilon \nabla u) + \bm{\beta}\cdot \nabla u+\gamma u=f \quad \text{in } \Omega,
\end{equation}
as the standard finite element methods for \eqref{eq:scalar-cd} may suffer from strong numerical oscillations and instability due to the presence of interior or boundary layer. To solve this challenge for the scalar case, one class of methods is based on the idea of exponential fitting, where some exponential functions are introduced in either constructing basis functions \cite{o1991analysis,o1991globally,dorfler1999uniform,dorfler1999uniform2,wang1997novel} or assembling stiffness matrix \cite{xu1999monotone}. Another class of methods for \eqref{eq:scalar-cd} follows the idea of upwind or streamline stabilization. Such methods include the stabilized discontinuous Galerkin method \cite{houston2002discontinuous, brezzi2004discontinuous,ayuso2009discontinuous}, streamline upwind/Petrov-Galerkin (SUPG) method \cite{mizukami1985petrov,hughes1979finite, franca1992stabilized, chen2005optimal, burman2010consistent}, Galerkin/least squares finite element method \cite{hughes1989new}, bubble function stabilization \cite{brezzi1994choosing, brezzi1998applications, brezzi1998further, brezzi1999priori, franca2002stability}, local projection stabilization \cite{braack2006local,matthies2007unified}, edge stabilization \cite{burman2004edge}, and continuous interior penalty method \cite{burman2005unified, burman2007continuous,burman2009weighted}. 

In recent years, the vector case of the advection-diffusion problem \eqref{eq:Hcurl-cd} has received more and more attention from the numerical computation. In terms of exterior calculus, \eqref{eq:scalar-cd} and \eqref{eq:Hcurl-cd} are basically particular instances of a linear advection-diffusion problem for differential forms \cite{heumann2011eulerian, heumann2013stabilized}. Therefore,  the underlying philosophy of devising their schemes would have something in common. In fact, motivated by \cite{xu1999monotone}, a simplex-averaged finite element (SAFE) method for both scalar and vector cases of advection-diffusion problems was proposed in \cite{wu2020simplex}, which is of the class of exponential fitting methods that lead to a natural upwind effect when the diffusion coefficient approaches to zero. The stabilized Galerkin method was studied in \cite{heumann2013stabilized} for \eqref{eq:Hcurl-cd} with $\varepsilon = 0$ (pure advection case), and was extended to more general cases \cite{heumann2011eulerian, heumann2015stabilized}. We note that these DG methods are constructed by specifying the numerical fluxes at the interelement boundaries. 

In this paper,  we apply the weighted-residual approach \cite{brezzi2006stabilization} to derive the discontinuous Galerkin formulations for \eqref{eq:Hcurl-cd}, which establishes a linear relationship between the residual inside each element and the necessary jumps across interelement boundaries. Such an attempt was successfully applied to the scalar advection-diffusion problem \cite{ayuso2009discontinuous}, where several DG methods \cite{cockburn1999some, castillo2002optimal, zarin2005interior, sun2005symmetric} can be recovered and two new methods were developed.  

Compared to the scalar case, the Lie advection \eqref{eq:Lie-advection} becomes more complicated. To clarify the mechanism of stabilization, we first divide the vector jump into a tangential component and a normal component, by which the connection between weighted and standard averages can be established (see Lemma \ref{lm:weighted-jumps}). With the help of such results, we find that the tangential component of the weighted average corresponds to the cross jump terms, which will cancel out with a suitable choice of the test function. We note that such a cancellation effect is unique to the vector case. As a result, more flexibility can be brought in the design of stabilized DG scheme for \eqref{eq:Lie-advection}. By setting different parameters, our scheme can correspond to the generalization of a variety of classical DG schemes for the vector cases. In particular, the scheme in \cite{heumann2013stabilized} can be recovered under a special problem setting and parameter selection.

The analysis of the scheme is established under a general assumption concerning the variable advection and reaction. Specifically, we admit the {\it degenerate Friedrichs system} which requires the minimal eigenvalue of the Friedrichs system only to be non-negative (see Assumption \ref{as:Friedrichs}). 
To the best of our knowledge, the existing works need the minimal eigenvalue to have a positive constant lower bound, in both continuous  \cite{friedrichs1958symmetric} and discrete cases \cite{ern2006discontinuous, heumann2013stabilized}. Clearly, the current analysis broadens the applicability of the proposed DG schemes, especially for the case in which the advection dominates in different parts of the domain.  

There are two crucial steps in the stability analysis. Inspired by \cite{ayuso2009discontinuous}, a weight function is introduced in Section \ref{subsec:weight} to obtain a suitable test function for the stability at the continuous level.  However, such a test function does not belong to the DG space whence a special projection has to be introduced in Section \ref{subsec:projection}. Different from the scalar case, the appearance of the jump term under the tangential direction makes the standard $L^2$  projection inadequate to control the normal component. Therefore, the special projection needs to take into account the normal moments of some facets (i.e., $(n-1)$-dimensional face), as considered in the DG schemes of scalar problems in the mixed form \cite{cockburn2008optimal,fu2015analysis}. With these two ingredients, we show the discrete inf-sup stability under a very mild mesh assumption, which is valid for the mesh size $h < h_0$ with $h_0$ independent of $\varepsilon$ (see Assumption \ref{as:normal-beta} and Appendix \ref{as:normal-beta-proof}).

The rest of the paper is organized as follows. In Section \ref{sc:setting}, we introduce the setup of the problem by presenting some relevant notations, assumptions, and DG identities. In Section \ref{sc:discretization}, we give the DG scheme in a general form. The stability analysis under a suitable DG energy norm is shown in Section \ref{sc:stability}. Section \ref{sc:error} is devoted to a priori error estimate, where the optimal convergence is proved in the DG energy norm.  Finally, in Section \ref{sc:numerical}, we present several numerical experiments to validate our theoretical results.

\section{Problem setting} \label{sc:setting}
In this section, we introduce some assumptions to establish the well-posedness of the DG method.  We confine the discussion to the three-dimensional case as the discretization and analysis can be easily applied to the two-dimensional translational symmetry case.  Given a bounded domain $D\subset \mathbb{R}^3$, a positive integer $s$ and $p\in[1,\infty]$, $W^{s,p}(D)$ is the Sobolev space with the corresponding usual norm and semi-norm, which are denoted by
$\|\cdot\|_{s,p,D}$ and $|\cdot|_{s.p,D}$, respectively. When $p=2$, $H^{s}(D):=W^{s,2}(D)$ with $\|\cdot\|_{s,D}=\|\cdot\|_{s,2,D}$ and $|\cdot|_{s,D}=|\cdot|_{s,2,D}$. The $L^2$-inner product on $D$ and $\partial D$ are denoted by $(\cdot, \cdot)_{D}$ and $\langle\cdot, \cdot\rangle_{\partial D}$, respectively. 


\subsection{Assumptions}
To begin with, motivated by \cite{devinatz1974asymptotic,
ayuso2009discontinuous}, an important assumption on the velocity field $\bm{\beta}$ is introduced.
\begin{assumption}[neither closed curve nor stationary point] \label{as:b0}
The velocity field $\bm{\beta}$ satisfies 
$$
\bm{\beta} \in \bm{W}^{1,\infty}(\Omega)  \text{ has no closed curves},
\quad \bm{\beta}(x) \neq \bm{0}, \quad \forall x \in\Omega.
$$
\end{assumption}

Under Assumption \ref{as:b0},  there exists a smooth function $\psi$ such that
\begin{equation}\label{eq:psi}
\bm{\beta} \cdot \nabla \psi(x)\ge 2b_0 \quad \forall x\in \Omega,
\end{equation}
for some constant $b_0>0$; See \cite{devinatz1974asymptotic} or
\cite[Appendix A]{ayuso2009discontinuous} for a proof.  

The next assumption gives special care to the Lie advection
$L_{{\bm{\beta}}} \bm{u}$ defined in \eqref{eq:Lie-advection}. To fix
the idea, the formal dual operator for $L_{{\bm{\beta}}}$, denoted by
$\mathcal{L}_{{\bm{\beta}}}$, is defined by
\begin{equation} \label{eq:dual-Lie}
\mathcal{L}_{{\bm{\beta}}} \bm{v} := \nabla \times ({\bm{\beta}}
  \times \bm{v}) - {\bm{\beta}} \nabla \cdot \bm{v},
\end{equation}
which yields, after a direct calculation, that for any domain $D$
\begin{subequations} \label{eq:Lie}
\begin{align}
L_{{\bm{\beta}}} \bm{u}+ \mathcal{L}_{{\bm{\beta}}}\bm{u}  &= -
  (\nabla \cdot {\bm{\beta}}) \bm{u} + \left[ \nabla {\bm{\beta}} +
  (\nabla {\bm{\beta}})^T \right] \bm{u}, \label{eq:Lie-1} \\
(L_{{\bm{\beta}}} \bm{u}, \bm{v})_D &= (\bm{u},
  \mathcal{L}_{{\bm{\beta}}}\bm{v})_D  + \langle \bm{\beta} \cdot
  \bm{n}, \bm{u} \cdot \bm{v} \rangle_{\partial D} \quad \forall
  \bm{u}, \bm{v} \in \bm{H}^{1}(D). \label{eq:Lie-2}
\end{align}
\end{subequations}
Note that no derivative of $\bm{u}$ appears on the right-hand side of
\eqref{eq:Lie-1}, which implies that half of the Lie advection in
\eqref{eq:curl-scheme-B} should be integrated by parts in the analysis
of well-posedness.  As a result, the ``effective'' reaction matrix is
assumed to be positive semi-definite, as stated below. 

\begin{assumption}[degenerate Friedrichs system]  \label{as:Friedrichs}
The smallest eigenvalue of the ``effective'' reaction matrix satisfies
\begin{equation} \label{eq:Friedrichs}
  \rho(x) := \lambda_{\min} \left[ 
(\gamma - \frac{\nabla \cdot \bm{\beta}}{2}) I + \frac{\nabla
  \bm{\beta} + (\nabla \bm{\beta})^T}{2} \right] \geq \rho_0 \geq
  0, \quad x\in \Omega.
\end{equation}
\end{assumption}

We note that the Assumption \ref{as:Friedrichs} stems from the
framework of Friedrichs symmetric operators
\cite{friedrichs1958symmetric}. The improvement in the current
work is that $\rho(x)$ does not need to have a positive lower bound,
compared to the existing DG schemes such as
\cite{ern2006discontinuous, heumann2013stabilized}.

Let $\mathcal{T}_h$ be a shape-regular family of decompositions of
$\Omega$, such that each open boundary facet belongs to either to
$\Gamma_D^\pm$, or to $\Gamma_N^\pm$ and $\mathcal{F}_h$ be the set of facets.  Let $\mathcal{F}_h^0 =
\mathcal{F}_h \setminus \partial\Omega$ be the set of interior facets
and $\mathcal{F}_h^\partial = \mathcal{F}_h \setminus \mathcal{F}_h^0$
be the set of boundary facets. The requirement of the $\mathcal{T}_h$
implies that $\mathcal{F}_h^\partial = \mathcal{F}_{h,D}^{\partial, -}
\cup \mathcal{F}_{h,D}^{\partial, +} \cup \mathcal{F}_{h,N}^{\partial,
-} \cup \mathcal{F}_{h,N}^{\partial, +}$, where 
$$
\mathcal{F}_{h,D}^{\partial, \pm} := \{F \in \mathcal{F}_h^\partial:~
F\subset \Gamma_D^\pm\}, \quad 
\mathcal{F}_{h,N}^{\partial, \pm} := \{F \in \mathcal{F}_h^\partial:~
F\subset \Gamma_N^\pm\}.
$$
We also denote by $h_T$ and $h_F$ the diameter of $T$ and $F$, respectively and set $h=\max_{T\in\mathcal{T}_h}h_T$. For $F \in \mathcal{F}_h$, we select a fixed normal unit direction, denoted by $\bm{n}_F$. Since we admit a milder Assumption \ref{as:Friedrichs} on the Friedrichs system, an additional assumption on the mesh is considered as follows. 
\begin{assumption}[normal domination] \label{as:normal-beta} 
  There exists a constant $C_{\bm{\beta}} > 0$ such that for any $T \in
\mathcal{T}_h$,
  \begin{equation} \label{eq:normal-beta}
    \|\bm{\beta}\|_{0,\infty,F} \leq C_{\bm{\beta}} \min_{x \in F}
    |\bm{\beta}(x) \cdot \bm{n}_F|
  \end{equation}
holds at least on {\it one facet} $F$ of $T$. 
\end{assumption}
\begin{remark}[verification of Assumption \ref{as:normal-beta}]
  We note that the Assumption \ref{as:normal-beta} is very mild.
  Intuitively, if $\bm{\beta}|_T$ is a constant vector,
  \eqref{eq:normal-beta} requires only one facet of $T$ to have the
  uniform positive angle between $\bm{\beta}|_T$, which is guaranteed
  by the shape regularity (in this case the constant depends only on
  the shape regularity).  In general, it can be shown that there
  exists a mesh size $h_0>0$ such that Assumption \ref{as:normal-beta}
  holds for $h\leq h_0$ (see Appendix \ref{as:normal-beta-proof} for
  details).
\end{remark}

To simplify the presentation in the analysis, we also introduce the
following set of facets 
\begin{equation} \label{eq:facet-star}
  \mathcal{F}_h^\star := \{F \in \mathcal{F}_h:~
    \|\bm{\beta}\|_{0,\infty,F} \leq C_{\bm{\beta}} \min_{x \in F}
    |\bm{\beta}(x) \cdot \bm{n}_F|\}. 
\end{equation}

\subsection{DG notation}
Let $F$ be the common facet of two elements $T^+$ and $T^-$, and
$\bm{n}^\pm $ be the unit outward norm vector on $\partial T^\pm$. We
define the jumps on the interior facet $F\in \mathcal{F}_h^0$,
$$ 
\begin{aligned}
\llbracket \bm{v} \rrbracket_{\rm t} &:= \bm{n}^+ \times \bm{v}^+ + \bm{n}^- \times \bm{v}^-, 
\qquad [\bm{v}]_{\rm n} := \bm{v}^+ \cdot \bm{n}^+ + \bm{v}^- \cdot \bm{n}^-, \\
\llbracket v \rrbracket &:= v^+\bm{n}^+ + v^-  \bm{n}^-.
\end{aligned}
$$ 
Clearly, the above jumps are independent of the orientation of the facet. For
further proposes, we also define an orientation-dependent jump $
\llbracket \bm{v} \rrbracket_F := \bm{v}^+ - \bm{v}^-$, which
satisfies 
$$ 
\llbracket \bm{v} \rrbracket_F \cdot \llbracket \bm{w} \rrbracket_F =
\llbracket \bm{v} \rrbracket_{\rm t} \cdot \llbracket \bm{w}
\rrbracket_{\rm t} + [\bm{v}]_{\rm n} [\bm{w}]_{\rm n} \quad \forall F
\in \mathcal{F}_h^0.
$$ 

With each interior facet $F$, we associate a pair of real numbers
$\alpha^\pm$, called {\it (general) weight}, satisfying $\alpha^+ + \alpha^-
= 1$.  Given a weight $\alpha$ on the interior facet, the weighted averages for
scalar-valued and vector-valued functions are respectively defined as
\begin{equation} \label{eq:weighted-avg}
\{v\}_\alpha = \alpha^+ v^+ + \alpha^- v^-, \qquad 
\vavg{\bm{v}}_\alpha = \alpha^+ \bm{v}^+ + \alpha^- \bm{v}^-.
\end{equation}
Specifically, $\alpha^\pm = 1/2$ corresponds to the standard average,
whose index is omitted for simplicity.  On the boundary facet $F \in \mathcal{F}_h^\partial$, we
set 
$$ 
\llbracket \bm{v} \rrbracket_{\rm t} := \bm{n} \times \bm{v}, \quad
[\bm{v}]_n := \bm{v} \cdot \bm{n}, \quad \llbracket v \rrbracket := v
\bm{n}, 
\quad \vavg{\bm{v}} := \bm{v}, \quad \{v\} := v.
$$ 
We also define some inner products as follows:
$$ 
(\cdot,\cdot)_{\mathcal{T}_h} = \sum_{T \in \mathcal{T}_h} (\cdot,
\cdot)_T, \qquad \langle \cdot, \cdot \rangle_{\mathcal{F}} =
\sum_{F\in \mathcal{F}}\langle \cdot, \cdot \rangle_F,
$$
where the set $\mathcal{F}$ can be taken as $\mathcal{F}_h^0$,
$\mathcal{F}_{h,D}^{\partial,\pm}$, $\mathcal{F}_{h,N}^{\partial,
\pm}$, or some of their union. The following DG identities are frequently used (cf. \cite[Eqn. (3.3)]{arnold2002unified} and \cite[Eqn. (3.9)]{perugia2003hp})
\begin{subequations}
\begin{align}
\sum_{T \in \mathcal{T}_h} \int_{\partial T} \bm{v} \cdot \bm{n} w &=
  \sum_{F\in \mathcal{F}_h} \int_F \vavg{\bm{v}} \cdot \llbracket w
  \rrbracket + \sum_{F \in \mathcal{F}_h^0} \int_F [\bm{v}]_{\rm n}
  \{w\}, \label{eq:DG-id-dot} \\
\sum_{T\in \mathcal{T}_h} \int_{\partial T}  (\bm{n} \times \bm{v})
  \cdot \bm{w} &= - \sum_{F \in \mathcal{F}_h} \int_F \vavg{\bm{v}}
  \cdot \llbracket \bm{w} \rrbracket_{\rm t} + \sum_{F \in
  \mathcal{F}_h^0} \int_{F} \llbracket \bm{v} \rrbracket_{\rm t} \cdot
  \vavg{\bm{w}}. \label{eq:DG-id-times}
\end{align}
\end{subequations}
By direct calculation, another identity that involves the velocity field $\bm{\beta}$ is given below:
\begin{equation} \label{eq:beta-id}
\llbracket \bm{\beta} \times \bm{v} \rrbracket_{\rm t} \cdot \vavg{\bm{w}} 
- [\bm{v}]_{\rm n} \{\bm{\beta} \cdot \bm{w}\}
= -\bm{\beta} \cdot \bm{n}^+ \llbracket \bm{v} \rrbracket_F \cdot \vavg{\bm{w}} \quad \text{on any }F \in \mathcal{F}_h.
\end{equation}

Next, we give some useful identities on the weighted averages. 
\begin{lemma}[weighted average identities] \label{lm:weighted-jumps}
Whenever $\bm{n}_F$ is orthogonal to $F \in \mathcal{F}_h^0$, the following relations hold:
\begin{subequations} \label{eq:weighted-jumps}
\begin{align}
\{v\}_\alpha &= \{v\} + \llbracket v \rrbracket \cdot \frac{\llbracket
  \alpha \rrbracket}{2}, \label{eq:weighted-jump}\\
\vavg{\bm{v}}_\alpha\cdot \bm{n}_F &= \left( \vavg{\bm{v}} +
  [\bm{v}]_{\rm n}  \frac{\llbracket \alpha \rrbracket}{2} \right)
  \cdot \bm{n}_F,  \label{eq:weighted-jump-n} \\
\bm{n}_F \times \vavg{\bm{v}}_\alpha &= \bm{n}_F \times \left(
  \vavg{\bm{v}} + \llbracket \bm{v}\rrbracket_{\rm t} \times
  \frac{\llbracket \alpha \rrbracket}{2} \right).
  \label{eq:weighted-jump-t}
\end{align}
\end{subequations}
\end{lemma}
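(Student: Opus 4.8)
The plan is to fix an orientation on the facet and reduce all three identities to elementary vector algebra, since each is really the projection of a single ``master'' decomposition of the weighted average onto the scalar, normal, and tangential directions.

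First I would fix $F \in \mathcal{F}_h^0$ shared by $T^\pm$ and, since $\bm{n}^+ = -\bm{n}^-$, take the fixed normal to be $\bm{n}_F = \bm{n}^+ = -\bm{n}^-$. Both sides of each of \eqref{eq:weighted-jump}--\eqref{eq:weighted-jump-t} are invariant under the simultaneous relabeling $+\leftrightarrow-$: the standard average and the jumps $\llbracket v\rrbracket$, $[\bm{v}]_{\rm n}$, $\llbracket\bm{v}\rrbracket_{\rm t}$, $\llbracket\alpha\rrbracket$ are orientation-independent, while $\{v\}_\alpha$ and $\vavg{\bm{v}}_\alpha$ are invariant when $\alpha^\pm$ is relabeled together with $\bm{v}^\pm$, so this choice is without loss of generality. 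With $\alpha^+ + \alpha^- = 1$ I write $\alpha^\pm = \tfrac12 \pm \delta$ where $\delta := (\alpha^+-\alpha^-)/2$, so that $\llbracket\alpha\rrbracket/2 = \delta\,\bm{n}_F$. The master identity is then $\vavg{\bm{v}}_\alpha = \vavg{\bm{v}} + \delta\,(\bm{v}^+ - \bm{v}^-)$, together with its scalar analogue $\{v\}_\alpha = \{v\} + \delta\,(v^+ - v^-)$, each of which is immediate from $\alpha^\pm = \tfrac12\pm\delta$.

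From here \eqref{eq:weighted-jump} and \eqref{eq:weighted-jump-n} follow by matching the correction term. For the scalar case, $\llbracket v\rrbracket = (v^+-v^-)\bm{n}_F$ and $\llbracket\alpha\rrbracket/2 = \delta\bm{n}_F$ give $\llbracket v\rrbracket\cdot\frac{\llbracket\alpha\rrbracket}{2} = \delta(v^+-v^-)$, which is exactly the correction in the scalar master identity. For the normal component I dot the vector master identity with $\bm{n}_F$ and use $[\bm{v}]_{\rm n} = (\bm{v}^+-\bm{v}^-)\cdot\bm{n}_F$ to rewrite $\delta\,(\bm{v}^+-\bm{v}^-)\cdot\bm{n}_F = [\bm{v}]_{\rm n}\,\frac{\llbracket\alpha\rrbracket}{2}\cdot\bm{n}_F$.

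The tangential identity \eqref{eq:weighted-jump-t} is the only place requiring more than dot products. Crossing the master identity with $\bm{n}_F$ gives $\bm{n}_F\times\vavg{\bm{v}}_\alpha = \bm{n}_F\times\vavg{\bm{v}} + \delta\,\bm{n}_F\times(\bm{v}^+-\bm{v}^-)$, so it remains to show $\delta\,\bm{n}_F\times(\bm{v}^+-\bm{v}^-) = \bm{n}_F\times\bigl(\llbracket\bm{v}\rrbracket_{\rm t}\times\tfrac{\llbracket\alpha\rrbracket}{2}\bigr)$. Using $\llbracket\bm{v}\rrbracket_{\rm t} = \bm{n}_F\times(\bm{v}^+-\bm{v}^-)$ and the vector triple product (BAC--CAB) identity, one finds $\llbracket\bm{v}\rrbracket_{\rm t}\times\tfrac{\llbracket\alpha\rrbracket}{2} = \delta\,\bigl[(\bm{v}^+-\bm{v}^-) - \bm{n}_F\,((\bm{v}^+-\bm{v}^-)\cdot\bm{n}_F)\bigr]$, i.e. $\delta$ times the tangential part of $\bm{v}^+-\bm{v}^-$; crossing this with $\bm{n}_F$ restores $\delta\,\bm{n}_F\times(\bm{v}^+-\bm{v}^-)$ because $\bm{n}_F\times$ annihilates the $\bm{n}_F$-parallel component. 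The one subtle point---and the only place a sign or a term could be dropped---is this triple-product manipulation together with the observation that $\bm{n}_F\times$ kills the normal part; everything else is bookkeeping with $\alpha^++\alpha^-=1$ and $\bm{n}^+=-\bm{n}^-$.
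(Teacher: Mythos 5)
Your proof is correct and takes essentially the same route as the paper: the paper likewise starts from the expansion $\vavg{\bm{v}}_\alpha = \vavg{\bm{v}} + \frac{\alpha^+-\alpha^-}{2}(\bm{v}^+ - \bm{v}^-)$ and settles \eqref{eq:weighted-jump-t} with the same triple-product fact, written there as the decomposition $\bm{v} = (\bm{n}\times\bm{v})\times\bm{n} + (\bm{v}\cdot\bm{n})\bm{n}$, whose normal part is annihilated by the outer $\bm{n}_F\times$, together with $\bm{n}^- = -\bm{n}^+$ and $\llbracket\alpha\rrbracket = (\alpha^+-\alpha^-)\bm{n}^+$. The only cosmetic difference is that the paper writes out just \eqref{eq:weighted-jump-t}, citing \eqref{eq:weighted-jump} as standard and \eqref{eq:weighted-jump-n} from the literature, whereas you verify all three from the single master identity; your dot-product checks of the first two are the same bookkeeping.
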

\begin{proof}
We only present the proof of \eqref{eq:weighted-jump-t} as \eqref{eq:weighted-jump} is standard and 
  \eqref{eq:weighted-jump-n} has been given in \cite[Eqn.
  (2.9)]{ayuso2009discontinuous}. Using the identity $\bm{v} = (\bm{n}
  \times \bm{v}) \times \bm{n} + (\bm{v} \cdot \bm{n}) \bm{n}$, we
  have 
$$
\begin{aligned}
\bm{n}_F \times \vavg{\bm{v}}_\alpha &= \bm{n}_F \times \left(
  \vavg{\bm{v}} + (\bm{v}^+ - \bm{v}^-) \frac{(\alpha^+ -
  \alpha^-)}{2} \right) \\
&=  \bm{n}_F \times \left( \vavg{\bm{v}} + [(\bm{n}^+ \times \bm{v}^+)
  \times \bm{n}^+ -  (\bm{n}^- \times \bm{v}^-) \times \bm{n}^-]
  \frac{(\alpha^+ - \alpha^-)}{2} \right) \\
&= \bm{n}_F \times \left( \vavg{\bm{v}} + \llbracket \bm{v}
  \rrbracket_{\rm t}  \times \bm{n}^+\frac{(\alpha^+ - \alpha^-)}{2}
  \right) = \bm{n}_F \times \left( \vavg{\bm{v}} + \llbracket
  \bm{v}\rrbracket_{\rm t} \times  \frac{\llbracket \alpha
  \rrbracket}{2} \right),
\end{aligned}
$$
which proves \eqref{eq:weighted-jump-t}.
\end{proof}

We conclude this section by presenting more identities that involve the general weight $\alpha$, whose proofs are straightforward and therefore omitted. On any $F \in \mathcal{F}_h^0$, denote $[\alpha]_F := \alpha^+ - \alpha^-$, then we have 
\begin{equation} \label{eq:alpha-id1}
\begin{aligned}
& ~ [\bm{v}]_{\rm n} (\llbracket \bm{\beta} \cdot \bm{w} \rrbracket \cdot \llbracket \alpha \rrbracket) 
+ \llbracket \bm{v} \rrbracket_{\rm t} \cdot  (\llbracket \bm{\beta} \times \bm{w} \rrbracket_{\rm t} \times \llbracket \alpha \rrbracket)\\
= &~ (\bm{\beta}\cdot\llbracket\alpha\rrbracket) (\llbracket\bm{v}\rrbracket_{F} \cdot \llbracket\bm{w} \rrbracket_{F}) 
- [\alpha]_F \bm{\beta}\cdot (\llbracket\bm{v}\rrbracket_{F} [\bm{w}]_{\rm n} - \llbracket\bm{w}\rrbracket_{F} [\bm{v}]_{\rm n}),
\end{aligned}
\end{equation}
and
\begin{equation} \label{eq:alpha-id2}
\begin{aligned}
&~\llbracket \bm{\beta}\times\bm{v}\rrbracket_{\rm t}\cdot ( \llbracket \bm{w} \rrbracket_{\rm t}\times \llbracket \alpha \rrbracket) 
+ \llbracket \bm{v} \rrbracket_{\rm t} \cdot (\llbracket \bm{\beta} \times \bm{w} \rrbracket_{\rm t} \times \llbracket \alpha \rrbracket)\\
	=&~- [\alpha]_F \bm{\beta}\cdot (\llbracket\bm{v}\rrbracket_{F} [\bm{w}]_{\rm n} - \llbracket\bm{w}\rrbracket_{F} [\bm{v}]_{\rm n}).
\end{aligned}
\end{equation}

\section{DG scheme} \label{sc:discretization}
We rewrite the first equation in \eqref{eq:Hcurl-cd} as 
$$
\nabla \times(\varepsilon\nabla \times \bm{u} -\bm{\beta}\times
\bm{u}) + \bm{\beta} (\nabla\cdot \bm{u}) +\left[
 (\gamma-\nabla\cdot \bm{\beta})I + \nabla \bm{\beta} +(\nabla \bm{\beta})^T
 \right] \bm{u} = \bm{f}.
$$
By introducing the flux $\bm{\sigma}(\bm{u}) := \varepsilon \nabla
\times \bm{u} - \bm{\beta} \times \bm{u}$, the problem
\eqref{eq:Hcurl-cd} is equivalent to 
\begin{equation} \label{eq:Hcurl-cd-jump}
\left\{
\begin{aligned}
\nabla \times (\varepsilon \nabla \times \bm{u}) + L_{\bm{\beta}}
  \bm{u} + \gamma \bm{u} &= \bm{f} ~~\quad \text{in each } T \in
  \mathcal{T}_h, \\
\llbracket \bm{u} \rrbracket_{\rm t} &= \bm{0} ~~\quad \text{on each } F \in
  \mathcal{F}_h^0, \\
\llbracket \bm{\sigma}(\bm{u}) \rrbracket_{\rm t} &= \bm{0} ~~\quad \text{on
  each } F \in \mathcal{F}_h^0, \\
[\bm{u}]_{\rm n} &= 0 ~~\quad \text{on each } F \in \mathcal{F}_h^0, \\
\bm{n} \times \bm{u} + \chi_{\Gamma_D^-} (\bm{u} \cdot \bm{n}) \bm{n}
  &= \bm{g}_D \quad \text{on each } F \in \mathcal{F}_{h,D}^\partial, \\
\varepsilon \bm{n} \times (\nabla \times \bm{u}) + \chi_{\Gamma_N^-}
  ({\bm{\beta}} \cdot \bm{n}) \bm{u} &= \bm{g}_N \quad \text{on each }
  F \in \mathcal{F}_{h,N}^\partial.
\end{aligned}
\right.
\end{equation}

Following the residual-based approach \cite{brezzi2006stabilization,ayuso2009discontinuous}, we shall introduce a variational formulation of \eqref{eq:Hcurl-cd-jump} in which each of the equations above has the same relevance and is therefore treated in the same fashion. To this end, we introduce the space
$$
\bm{V}(\mathcal{T}_h) := \{\bm{v} \in \bm{L}^2(\Omega)~:~\bm{v}|_{T}
\in \bm{H}^s(T), \forall T \in \mathcal{T}_h, ~s > 3/2\},
$$ 
and we assume the operators $\mathcal{B}_1$, $\mathcal{B}_2$,
$\mathcal{B}_3$, $\mathcal{B}_1^D$, $\mathcal{B}_2^N$ mapping from
$\bm{V}(\mathcal{T}_h)$ to $\bm{L}^2(\mathcal{F}_h^0)$,
$\bm{L}^2(\mathcal{F}_h^0)$, $L^2(\mathcal{F}_h^0)$,
$\bm{L}^2(\Gamma_D)$, $\bm{L}^2(\Gamma_N)$, respectively. Then, we
consider the problem: Find $\bm{u} \in \bm{V}(\mathcal{T}_h)$ such
that for any $ \bm{v} \in \bm{V}(\mathcal{T}_h)$,
\begin{equation} \label{eq:curl-scheme-B}
\begin{aligned}
&\int_\Omega \left[\nabla_h \times(\varepsilon \nabla_h \times \bm{u})
  + L_{{\bm{\beta}},h} \bm{u} + \gamma \bm{u} - \bm{f}\right] \cdot
  \bm{v} \\
+& \sum_{F \in \mathcal{F}_h^0} \int_F \llbracket \bm{u}
  \rrbracket_{\rm t} \cdot \mathcal{B}_1 \bm{v} + \llbracket
  \bm{\sigma}(\bm{u}) \rrbracket_{\rm t} \cdot \mathcal{B}_2\bm{v} +
  [\bm{u}]_{\rm n} \mathcal{B}_3 \bm{v} \\
+ & \sum_{F \in \mathcal{F}_{h,D}^\partial} \int_F \left[ \bm{n} \times \bm{u} +
  \chi_{\Gamma_D^-} (\bm{u} \cdot \bm{n}) \bm{n} - \bm{g}_D\right]
  \cdot \mathcal{B}_1^D \bm{v} \\
+ & \sum_{F\in \mathcal{F}_{h,N}^\partial} \int_F\left[\varepsilon \bm{n} \times
  (\nabla_h \times \bm{u}) + \chi_{\Gamma_N^-} ({\bm{\beta}} \cdot
  \bm{n}) \bm{u} - \bm{g}_N\right] \cdot \mathcal{B}_2^N \bm{v} = 0.
\end{aligned}
\end{equation}
Here, $\nabla_h \times$ and $L_{\bm{\beta},h}$ denote the curl and Lie convection operators element by element, respectively. As a distinctive feature, the solution of the original problem \eqref{eq:Hcurl-cd} coincides with the solution of \eqref{eq:curl-scheme-B} with an arbitrary choice of $\mathcal{B}$'s operators.  

In this paper, we consider a general choice as follows:
\begin{subequations} \label{eq:general-Bs}
\begin{align}
\mathcal{B}_1 \bm{v} &= \underbrace{ \varepsilon\eta_F h_F^{-1}
  \llbracket \bm{v} \rrbracket_{\rm t} - \theta \vavg{  \varepsilon
  \nabla_h \times \bm{v} }_{\alpha_d}}_{\mathcal{B}_{11} \bm{v}}
\underbrace{-\vavg{ {\bm{\beta}} \times \bm{v}}_{\alpha_d} + \vavg{
  {\bm{\beta}} \times \bm{v}}_{\alpha}}_{\mathcal{B}_{12}\bm{v}}, \\
\mathcal{B}_2 \bm{v} &= -\vavg{\bm{v}}_{1- \alpha_d}, \\
\mathcal{B}_3 \bm{v} &= -\{{\bm{\beta}} \cdot \bm{v}\}_{1-\alpha} +
  \tau_F |\llbracket \alpha - \alpha_d \rrbracket| [\bm{v}]_n, \\
\mathcal{B}_1^D\bm{v} &= \underbrace{\varepsilon \eta_{F} h_F^{-1}
  \bm{n} \times \bm{v} - \theta [\bm{n} \times (\varepsilon \nabla_h
  \times \bm{v})] \times \bm{n} }_{\mathcal{B}_{11}^D\bm{v}}  \\
& \quad \underbrace{- ({\bm{\beta}} \cdot \bm{n})[\bm{n} \times \bm{v}
  + (\bm{n} \cdot \bm{v}) \bm{n}] 
  \chi_{\Gamma_D^-}}_{\mathcal{B}_{12}^D \bm{v}} ,  \notag \\
\mathcal{B}_2^N\bm{v} &= -\bm{v}.
\end{align}
\end{subequations}
Here, $\alpha$ and $\alpha_d$ are the general weights that are used to define the weighted averages \eqref{eq:weighted-avg}, $\theta \in [-1,1]$, $\tau_F$ and $\eta_F$ are the positive parameters that will be discussed later. Next, we derive the bilinear form for this choice.  

\paragraph{Part 1: diffusion} In view of $\mathcal{B}_2 \bm{v}$ and
\eqref{eq:weighted-jump-t}, we have 
$$ 
\begin{aligned}
&\quad \sum_{F\in \mathcal{F}_h^0} \int_F \llbracket
  \bm{\sigma}(\bm{u}) \rrbracket_{\rm t} \cdot \mathcal{B}_2 \bm{v} 
= \langle \llbracket \bm{\sigma}(\bm{u}) \rrbracket_{\rm t},
  -\vavg{\bm{v}}_{1 - \alpha_d} \rangle_{\mathcal{F}_h^0} \\
&= -\langle \llbracket \bm{\sigma}(\bm{u}) \rrbracket_{\rm t},
  \vavg{\bm{v}} \rangle_{\mathcal{F}_h^0} - 
\langle \llbracket \bm{v} \rrbracket_{\rm t},  \llbracket \varepsilon
  \nabla_h \times \bm{u} \rrbracket_{\rm t} \times \frac{\llbracket
  \alpha_d
\rrbracket}{2} \rangle_{\mathcal{F}_h^0} - \langle \llbracket
  {\bm{\beta}} \times \bm{u} \rrbracket_{\rm t}, \llbracket \bm{v}
  \rrbracket_{\rm t} \times \frac{\llbracket \alpha_d \rrbracket}{2}
  \rangle_{\mathcal{F}_h^0}.
\end{aligned}
$$
Further, on the Dirichlet boundary, we have 
$$ 
\sum_{F\in \mathcal{F}_{h,D}^\partial} \int_F (\bm{n} \times \bm{u}) \cdot
\mathcal{B}_{11}^D \bm{v} =
 \langle \varepsilon \eta_F h_F^{-1} \llbracket \bm{u} \rrbracket_t,
 \llbracket \bm{v} \rrbracket_t \rangle_{\mathcal{F}_{h,D}^\partial} -
 \theta \langle 
 \llbracket \bm{u} \rrbracket_{\rm t}, \vavg{\varepsilon \nabla_h
 \times \bm{v}} \rangle_{\mathcal{F}_{h,D}^\partial}.
$$  
Using above equations, together with \eqref{eq:DG-id-times}, we have 
\begin{equation} \label{eq:1-diffusion}
\begin{aligned}
& ~(\nabla_h \times (\varepsilon \nabla_h \times \bm{u}), \bm{v}) +
  \sum_{F \in \mathcal{F}_h^0} \int_F \llbracket \bm{u}
  \rrbracket_{\rm t} \cdot \mathcal{B}_{11} \bm{v} + \llbracket
  \bm{\sigma}(\bm{u}) \rrbracket_{\rm t} \cdot \mathcal{B}_2\bm{v} \\
& ~ + \sum_{F\in \mathcal{F}_{h,D}^\partial} \int_F (\bm{n} \times \bm{u}) \cdot
  \mathcal{B}_{11}^D \bm{v} + \sum_{F\in \mathcal{F}_{h,N}^\partial}
  \int_F\left[\varepsilon \bm{n} \times (\nabla_h \times
  \bm{u})\right] \cdot \mathcal{B}_2^N \bm{v}\\
= & ~ (\varepsilon \nabla_h \times \bm{u}, \nabla_h \times \bm{v}) -
  \langle \vavg{\varepsilon \nabla_h \times \bm{u}}_{\alpha_d},
  \llbracket \bm{v} \rrbracket_{\rm t} \rangle_{\mathcal{F}_h^0 \cup
  \mathcal{F}_{h,D}^\partial} + \sum_{F \in \mathcal{F}_h^0} \int_F
  \llbracket \bm{u} \rrbracket_{\rm t} \cdot \mathcal{B}_{11} \bm{v}
  \\
& + \sum_{F\in \mathcal{F}_{h,D}^\partial} \int_F (\bm{n} \times \bm{u}) \cdot
  \mathcal{B}_{11}^D \bm{v} +\langle \llbracket {\bm{\beta}} \times
  \bm{u} \rrbracket_{\rm t}, \vavg{\bm{v}} \rangle_{\mathcal{F}_h^0} -
\langle \llbracket {\bm{\beta}} \times \bm{u} \rrbracket_{\rm t},
  \llbracket \bm{v} \rrbracket_{\rm t} \times \frac{\llbracket
  \alpha_d \rrbracket}{2} \rangle_{\mathcal{F}_h^0}  \\
= & ~ (\varepsilon \nabla_h \times \bm{u}, \nabla_h \times \bm{v}) -
  \langle \vavg{\varepsilon \nabla_h \times \bm{u}}_{\alpha_d},
  \llbracket \bm{v} \rrbracket_{\rm t} \rangle_{\mathcal{F}_h^0 \cup
  \mathcal{F}_{h,D}^\partial} \\
&~ - \theta \langle \llbracket \bm{u} \rrbracket_{\rm t},
  \vavg{\varepsilon \nabla_h \times \bm{v} }_{\alpha_d}
  \rangle_{\mathcal{F}_h^0 \cup \mathcal{F}_{h,D}^\partial} + \langle
  \varepsilon \eta_F h_F^{-1}  \llbracket \bm{u} \rrbracket_{\rm t},
  \llbracket \bm{v} \rrbracket_{\rm t}  \rangle_{\mathcal{F}_h^0 \cup
  \mathcal{F}_{h,D}^\partial} \\
&~ +\langle \llbracket {\bm{\beta}} \times \bm{u} \rrbracket_{\rm t},
  \vavg{\bm{v}}_{1-\alpha_d} \rangle_{\mathcal{F}_h^0}.
\end{aligned}
\end{equation}
Here, we see that $\theta \in [-1,1]$ is a parameter that allows up to include
various formulations for treating the diffusive part: symmetric for
$\theta = 1$, skew-symmetric for $\theta = -1$, and neutral for
$\theta = 0$. 

\paragraph{Part 2: reaction-convection} The remaining terms are the interior part
\begin{equation}
\sum_{F\in \mathcal{F}_h^0} \int_F \llbracket \bm{u} \rrbracket_{\rm
  t} \cdot \mathcal{B}_{12} \bm{v} + \sum_{F\in \mathcal{F}_h^0}
  \int_F [\bm{u}]_{\rm n} \mathcal{B}_3\bm{v},
\end{equation}
and the boundary part
\begin{equation} \label{eq:2-Lie-boundary}
\begin{aligned}
\sum_{F \in \mathcal{F}_{h,N}^\partial} \int_F \chi_{\Gamma_N^-} (\bm{\beta} \cdot
  \bm{n}) \bm{u} \cdot \mathcal{B}_2^N\bm{v} &= - \langle \bm{\beta}
  \cdot \bm{n}, \bm{u} \cdot \bm{v}
  \rangle_{\mathcal{F}_{h,N}^{\partial, -}}, \\
\sum_{F \in \mathcal{F}_{h,D}^\partial} \int_F (\bm{n} \times \bm{u}) \cdot
  \mathcal{B}_{12}^D\bm{v} + \chi_{\Gamma_D^-} (\bm{u} \cdot \bm{n})
  \bm{n} \cdot \mathcal{B}^D\bm{v} &= - \langle \bm{\beta} \cdot
  \bm{n}, \bm{u} \cdot \bm{v} \rangle_{\mathcal{F}_{h,D}^{\partial,
  -}}.
\end{aligned}
\end{equation}

Combining \eqref{eq:1-diffusion}--\eqref{eq:2-Lie-boundary}, we obtain
the following scheme: Find $\bm{u} \in \bm{V}(\mathcal{T}_h)$ such
that 
\begin{equation} \label{eq:variational-form}
a_h(\bm{u}, \bm{v}) = F(\bm{v}),\quad \forall \bm{v}\in\bm{V}(\mathcal{T}_h),
\end{equation}
where $a_h(\bm{u}, \bm{v}) := a_h^d(\bm{u}, \bm{v}) + a_h^{rc}(\bm{u},
\bm{v})$ with 
\begin{subequations}\label{eq:partvariation-form}
\begin{align}
a_h^d(\bm{u}, \bm{v}) &:=  (\varepsilon \nabla_h \times \bm{u},
  \nabla_h \times \bm{v}) 
- \langle \vavg{\varepsilon \nabla_h \times \bm{u}}_{\alpha_d},
  \llbracket \bm{v} \rrbracket_{\rm t} \rangle_{\mathcal{F}_h^0 \cup
  \mathcal{F}_{h,D}^\partial} \label{eq:ad} \\
&~\quad - \theta \langle \llbracket \bm{u} \rrbracket_{\rm t},
  \vavg{\varepsilon \nabla_h \times \bm{v} }_{\alpha_d}
  \rangle_{\mathcal{F}_h^0 \cup \mathcal{F}_{h,D}^\partial} + \langle
  \varepsilon \eta_F h_F^{-1}  \llbracket \bm{u} \rrbracket_{\rm t},
  \llbracket \bm{v} \rrbracket_{\rm t}  \rangle_{\mathcal{F}_h^0 \cup
  \mathcal{F}_{h,D}^\partial}, \notag \\
a_h^{rc}(\bm{u}, \bm{v}) &:= (L_{\bm{\beta},h}\bm{u}+\gamma
  \bm{u},\bm{v}) +\langle \llbracket \bm{\beta}\times
  \bm{u}\rrbracket_{\rm
  t},\vavg{\bm{v}}_{1-\alpha_d}\rangle_{\mathcal{F}_h^0}-\langle\left[\bm{u}\right]_{\rm{n}},
    \{\bm{\beta}\cdot\bm{v}\}_{1-\alpha}\rangle_{\mathcal{F}_h^0}
    \notag \\
    &~\quad +\langle\llbracket \bm{u}\rrbracket_{\rm
    t},\vavg{\bm{\beta}\times \bm{v}}_\alpha-\vavg{\bm{\beta}\times
    \bm{v}}_{\alpha_d}\rangle_{\mathcal{F}_h^0}+\langle
    \tau_F|\llbracket\alpha-\alpha_d\rrbracket|[\bm{u}]_{\rm{n}},[\bm{v}]_{\rm{n}}\rangle_{\mathcal{F}_h^0}
    \label{eq:arc} \\
    &~\quad -\langle
    \bm{\beta}\cdot\bm{n},\bm{u}\cdot\bm{v}\rangle_{\mathcal{F}_{h,D}^{\partial,-}
    \cup \mathcal{F}_{h,N}^{\partial,-} }, \notag \\
F(\bm{v}) &:= (\bm{f}, \bm{v}) + \langle \bm{g}_D, \varepsilon \eta_F
  h_F^{-1} \bm{n} \times \bm{v} - \theta [\bm{n} \times (\varepsilon
  \nabla_h \times \bm{v})] \times \bm{n}
  \rangle_{\mathcal{F}_{h,D}^\partial} \\ 
&~\quad -\langle (\bm{\beta} \cdot \bm{n}) \bm{g}_D, \bm{n} \times
  \bm{v} + (\bm{n} \cdot \bm{v}) \bm{n}
  \rangle_{\mathcal{F}_{h,D}^{\partial, -}} - \langle \bm{g}_N,
  \bm{v}\rangle_{\mathcal{F}_{h,N}^\partial} \notag.
\end{align}
\end{subequations}

We will show in Section \ref{sc:stability} that a proper choice of $\alpha$ corresponds to the upwind stabilization, and $\alpha_d$ corresponds to the cancellation of the cross jump terms. Two parameters $\eta_F$ and $\tau_F$ play different roles: the former penalizes the tangential jump which is assumed to be large enough, and the latter stabilizes the normal jump which is assumed only to be positive. Further, we denote
\begin{equation} \label{eq:eta-tau}
\eta_0 := \min_{F \in \mathcal{F}_h^0 \cup \mathcal{F}_{h,D}^\partial}
\eta_F, \qquad \tau_0 := \min_{F \in \mathcal{F}_h^0} \tau_F. 
\end{equation}

\begin{remark}[new mechanism in vector case]
Note that in the pure advection case ($\varepsilon=0$) with a special choice $\alpha=\alpha_d$, subtracting \eqref{eq:alpha-id1} from \eqref{eq:alpha-id2}, and using \eqref{eq:Lie-2}, \eqref{eq:beta-id}, we obtain
 $$
 \begin{aligned}
a_h^{rc}(\bm{u},\bm{v}) 
& = (L_{\bm{\beta},h}\bm{u}+\gamma
 \bm{u},\bm{v}) +\langle \llbracket \bm{\beta}\times
 \bm{u}\rrbracket_{\rm
 	t},\vavg{\bm{v}}_{1-\alpha}\rangle_{\mathcal{F}_h^0}-\langle\left[\bm{u}\right]_{\rm{n}},
 \{\bm{\beta}\cdot\bm{v}\}_{1-\alpha}\rangle_{\mathcal{F}_h^0}\\&\quad -\langle
 \bm{\beta}\cdot\bm{n},\bm{u}\cdot\bm{v} \rangle_{\mathcal{F}_{h,D}^{\partial,-}
    \cup \mathcal{F}_{h,N}^{\partial,-} }\\
& =(\bm{u},\mathcal{L}_{\bm{\beta},h}\bm{v}+\gamma \bm{v}) +\langle \bm{\beta}\cdot \bm{n}^+\vavg{\bm{u}},\llbracket\bm{v}\rrbracket_{F}\rangle_{\mathcal{F}_h^0} \\
& \quad + \langle \frac{\alpha^+-\alpha^-}{2}\bm{\beta}\cdot \bm{n}^+\llbracket\bm{u}\rrbracket_{F},\llbracket\bm{v}\rrbracket_{F}\rangle_{\mathcal{F}_h^0}
   +\langle\bm{\beta}\cdot \bm{n},\bm{u}\cdot\bm{v}\rangle_{\mathcal{F}_{h}^{\partial,+}},
 \end{aligned}
 $$
 which recovers the scheme introduced in \cite[Eqn. (2.8)]{heumann2013stabilized} with $c_F=\frac{\alpha^+-\alpha^-}{2}$ if $\bm{n}_F=\bm{n}^+$ for facet $F$. In (\ref{eq:partvariation-form}), the roles of $\alpha$ and $\alpha_d$ are separated due to a new mechanism compared to the scalar case: the following cross jump terms
 $$
 \llbracket \bm{u} \rrbracket_{\rm t} \cdot \left( \llbracket \bm{\beta} \times \bm{v} \rrbracket_{\rm t} \times \frac{\llbracket {\alpha_d} \rrbracket}{2} \right) 
 +\llbracket \bm{\beta} \times \bm{u} \rrbracket_{\rm t} \cdot \left( \llbracket\bm{v} \rrbracket_{\rm t} \times \frac{\llbracket {\alpha_d} \rrbracket}{2} \right),
 $$
 will not vanish in general unless $\bm{v}=\bm{u}\zeta$ for some scalar smooth function $\zeta$. 
\end{remark}

\section{Stability analysis} \label{sc:stability}
With any integer $k\ge 1$, we use the finite element space of
discontinuous polynomial functions
$$
\bm{V}_h^k:=\{\bm{v}\in {{\bm{L}}}^2(\Omega):\bm{v}_{\mid T}\in
\bm{\mathcal{P}}_k(T) \quad\forall T\in\mathcal{T}_h\},
$$
where ${{\bm{\mathcal{P}}}}_k(T)$ is the space of polynomials of
degree at most $k$ on $T$. We have the following discrete problem:
\begin{equation} \label{eq:variational-form2}
	\left\{
	\begin{aligned}
		&\text{Find }\bm{u}_h\in\bm{V}_h^k \text{ such that }\\
		& a_h(\bm{u}_h,\bm{v}_h)=F(\bm{v}_h), \quad \forall \bm{v}_h\in \bm{V}_h^k.
	\end{aligned}
	\right.
\end{equation}
The variational form is consistent by construction, so that
\begin{equation}\label{eq:consistency}
	a_h(\bm{u}-\bm{u}_h,\bm{v}_h)=0 \quad \forall \bm{v}_h\in \bm{V}_h^k.
\end{equation}
where $\bm{u}$ is the solution of (\ref{eq:Hcurl-cd}).

We will prove the stability in the following norm
\begin{equation}\label{eq:energynorm}
\triplenorm{\bm{v}}^2:=\triplenorm{\bm{v}}^2_{d}+\triplenorm{\bm{v}}^2_{rc},
\end{equation}
with
$$
\begin{aligned}
&\triplenorm{\bm{v}}^2_{d}:=\varepsilon\|\nabla_h\times\bm{v}\|_{0,\Omega}^2+\varepsilon
  \|\bm{v}\|_{\rm
  t}^2:=\varepsilon\|\nabla_h\times\bm{v}\|_{0,\Omega}^2+\varepsilon\sum_{F\notin
  \Gamma_N}h_F^{-1}\|\llbracket\bm{v}\rrbracket_{\rm t}\|_{0,F}^2,\\
  &\triplenorm{\bm{v}}^2_{rc}:=\|(\bar{\rho}+b_0)^{1/2}\bm{v}\|_{0,\Omega}^2+\sum_{F\in
  \mathcal{F}_h}\||\bm{\beta}\cdot\bm{n}|^{1/2}\llbracket\bm{v}\rrbracket_{F}||^2_{0,F}
  + \|\bm{v}\|_{\rm{n}}^2, \\
  &\|\bm{v}\|_{\rm n}^2:=\sum_{F \in
  \mathcal{F}_h^0}|\llbracket\alpha-\alpha_d\rrbracket|
  \|[\bm{v}]_{\rm n}\|^2_{0,F},
	\end{aligned}
$$
where $\bar{\rho}(x)$ is a piecewise constant function defined as
\begin{equation}
  \bar{\rho}(x)_{\mid T} :=\bar{\rho}_{\mid T} :=\min _{x \in T}
  \rho(x) \quad \forall T \in \mathcal{T}_{h}.
\end{equation}

Using the local trace inequality and inverse inequality, we get for
any $\bm{w} \in \bm{V}_h^k$ and $\bm{v} \in \bm{V}(\mathcal{T}_h)$,
\begin{equation}\label{eq:traceinverse}
\begin{aligned}
&~~ \langle \vavg{\varepsilon \nabla_h \times
  \bm{w}}_{\alpha_d},\llbracket\bm{v}\rrbracket_{\rm
  t}\rangle_{\mathcal{F}_h^0 \cup \mathcal{F}_{h,D}^\partial} \\
\le&~ \varepsilon (\sum_{F\notin \Gamma_N}h_F \| \vavg{\nabla_h\times
  \bm{w}}_{\alpha_d}\|_{0,F}^2)^{1/2}(\sum_{F\notin
  \Gamma_N}h_F^{-1}\|\llbracket\bm{v}\rrbracket_{\rm
  t}\|_{0,F}^2)^{1/2}\\
\le&~ C_g\alpha_d^{\max}\varepsilon
  \|\nabla_h\times\bm{w}\|_{0,\Omega}\|\bm{v}\|_{\rm t},
\end{aligned}
\end{equation}
where $C_g$ depends only on the degree of the polynomials and shape-regularity constant, and $\alpha_d^{\max}$ is defined
by $\alpha_d^{\max} :=
\sup_{F\in\mathcal{F}_h^0}(|\alpha_d^+|+|\alpha_d^-|)$.  Next, we will
prove the stability in the norm (\ref{eq:energynorm}) through an inf-sup
condition. To this end, we shall discuss two ingredients for generalizing the condition of the Friedrichs system: a weight function in Section \ref{subsec:weight} and a tailored projection in Section \ref{subsec:projection}.

\subsection{Weight function} \label{subsec:weight}
Using the smooth function $\psi$ defined in Assumption \ref{as:b0} (neither closed curve nor stationary point), we
introduce the weight function \cite{ayuso2009discontinuous}
\begin{equation}\label{eq:weightfunction}
	\varphi := e^{-\psi}+\kappa := \chi + \kappa,
\end{equation}
where $\kappa$ is a positive constant to be determined later.
Assumption \ref{as:b0} demonstrates that there exist positive
constants $\chi^*,\chi_*,\tilde{\chi}$ such that
\begin{equation}\label{eq:chiinequality}
	\chi_*\le\chi\le\chi^*,\quad |\nabla\chi|\le \tilde{\chi}.
\end{equation}

 The following lemma is a generalization of results in \cite[Lemma
 4.1]{ayuso2009discontinuous} for magnetic advection-diffusion problems. 
\begin{lemma}[weighted coercivity] \label{lm:weightinfsup}
  Let $a_h(\cdot,\cdot)$ be defined in (\ref{eq:variational-form}),
  with $\tau_0 \geq 1$, $\eta_0$ and $\kappa$ satisfying
\begin{subequations}\label{eq:chicondition}
	\begin{align}
  \eta_0 & \geq \max\{8C_g^2 (\theta+1)^2(\alpha_d^{\max})^2, 1\},
    \label{eq:chicondition1}\\ 
  \chi_*+\kappa& \geq \max\{\frac{\chi^*+\kappa}{2},
    \frac{2\tilde{\chi}^2}{b_0\chi_*}\}, \label{eq:chicondition2}
	\end{align}
\end{subequations}
and the corresponding $\varphi$ is defined in
(\ref{eq:weightfunction}). If $\alpha$ is taken to be an upwind
direction, i.e. $\bm{\beta}\cdot\llbracket\alpha\rrbracket \geq
C_{up} |\bm{\beta} \cdot \bm{n}|$ for all $F\in \mathcal{F}_h^0$ ,
then the following inequalities hold for all $\bm{v_h}\in \bm{V}_h^k$,
\begin{subequations}
\begin{align}
a_h(\bm{v}_h,\bm{v}_h\varphi) &\ge \frac{\chi_*+\kappa}{2}
  \triplenorm{\bm{v}_h}_{d}^2 +
  \frac{\chi_*\min\{C_{up},1\}}{2}\triplenorm{\bm{v}_h}^2_{rc}
  \label{eq:weightinfsup1} \\
& \ge \frac{\chi_* \min\{C_{up}, 1\}}{2}\triplenorm{\bm{v}_h}^2,
  \notag \\ \triplenorm{\bm{v}_h\varphi} &\le
  \sqrt{2}(\chi^*+\kappa)\triplenorm{\bm{v}_h}.
  \label{eq:weightinfsup2}
\end{align}
\end{subequations}
\end{lemma}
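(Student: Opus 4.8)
The plan is to follow the weighted-testing strategy of \cite{ayuso2009discontinuous}: evaluate $a_h(\bm{v}_h, \bm{v}_h\varphi)$ and exploit that $\varphi = \chi + \kappa$ is smooth, hence single-valued on every facet, so that all jumps and weighted averages of $\bm{v}_h\varphi$ factor as $\varphi$ times the corresponding quantity for $\bm{v}_h$, while the volume curl and Lie operators generate extra terms in $\nabla\varphi$. I would split $a_h = a_h^d + a_h^{rc}$, bound the two pieces separately, and then combine.

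For $a_h^d(\bm{v}_h,\bm{v}_h\varphi)$ I first expand $\nabla_h\times(\bm{v}_h\varphi) = \varphi\,\nabla_h\times\bm{v}_h + \nabla\varphi\times\bm{v}_h$, so the volume term yields $\varepsilon\int\varphi|\nabla_h\times\bm{v}_h|^2 \ge (\chi_*+\kappa)\varepsilon\|\nabla_h\times\bm{v}_h\|_{0,\Omega}^2$ plus a cross term $\varepsilon(\nabla_h\times\bm{v}_h, \nabla\varphi\times\bm{v}_h)$. Using continuity of $\varphi$, the two consistency terms and the penalty collapse to $-(1+\theta)\varepsilon\langle\varphi\vavg{\nabla_h\times\bm{v}_h}_{\alpha_d}, \llbracket\bm{v}_h\rrbracket_{\rm t}\rangle + \langle\varepsilon\eta_F h_F^{-1}\varphi\llbracket\bm{v}_h\rrbracket_{\rm t}, \llbracket\bm{v}_h\rrbracket_{\rm t}\rangle$, plus a further cross term $-\theta\varepsilon\langle\llbracket\bm{v}_h\rrbracket_{\rm t}, \nabla\varphi\times\vavg{\bm{v}_h}_{\alpha_d}\rangle$. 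I would bound the consistency term by the trace--inverse inequality \eqref{eq:traceinverse} (absorbing the weight via $\chi_*+\kappa \le \varphi \le \chi^*+\kappa$) and Young's inequality; the penalty condition \eqref{eq:chicondition1} on $\eta_0$ is calibrated so that, after using $\chi^*+\kappa \le 2(\chi_*+\kappa)$ from \eqref{eq:chicondition2}, the retained coefficients on both $\varepsilon\|\nabla_h\times\bm{v}_h\|_{0,\Omega}^2$ and $\varepsilon\|\bm{v}_h\|_{\rm t}^2$ are at least $\tfrac{\chi_*+\kappa}{2}$, giving $\tfrac{\chi_*+\kappa}{2}\triplenorm{\bm{v}_h}_d^2$.

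For $a_h^{rc}$ I would integrate half of the Lie advection by parts. Using \eqref{eq:dual-Lie}--\eqref{eq:Lie} together with the product rule $\mathcal{L}_{\bm{\beta}}(\varphi\bm{v}_h) = \varphi\mathcal{L}_{\bm{\beta}}\bm{v}_h - (\bm{\beta}\cdot\nabla\varphi)\bm{v}_h$, I obtain on each element $(L_{\bm{\beta}}\bm{v}_h + \gamma\bm{v}_h, \varphi\bm{v}_h)_T = \int_T \varphi\, R\bm{v}_h\cdot\bm{v}_h - \tfrac12\int_T(\bm{\beta}\cdot\nabla\varphi)|\bm{v}_h|^2 + \tfrac12\langle\bm{\beta}\cdot\bm{n}, \varphi|\bm{v}_h|^2\rangle_{\partial T}$, where $R$ is the effective reaction matrix of \eqref{eq:Friedrichs}. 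Since $\nabla\varphi = -\chi\nabla\psi$ and $\bm{\beta}\cdot\nabla\psi\ge 2b_0$, the second term is $\ge \chi_* b_0\|\bm{v}_h\|_{0,\Omega}^2$, and with $R\bm{v}_h\cdot\bm{v}_h\ge\bar\rho|\bm{v}_h|^2$ and $\varphi\ge\chi_*+\kappa$ the volume contribution is $\ge\chi_*\|(\bar\rho+b_0)^{1/2}\bm{v}_h\|_{0,\Omega}^2$; this is the mechanism compensating the possibly degenerate $\rho$. I then assemble all facet contributions: the element-boundary terms combine with the $\vavg{\cdot}$ interface terms through \eqref{eq:beta-id}, where the standard-average pieces cancel exactly, and the weighted-average corrections are resolved using \eqref{eq:weighted-jumps}, \eqref{eq:alpha-id1}, \eqref{eq:alpha-id2}. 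The decisive simplification is the new vector mechanism: because the test function is $\bm{v}_h\varphi$, applying \eqref{eq:alpha-id2} with $\bm{v}=\bm{w}=\bm{v}_h$ makes all $\alpha_d$ cross-jump terms vanish identically, leaving only the $\alpha$-upwind term $\tfrac12(\bm{\beta}\cdot\llbracket\alpha\rrbracket)|\llbracket\bm{v}_h\rrbracket_F|^2$ (via \eqref{eq:alpha-id1}), the normal stabilization $\tau_F|\llbracket\alpha-\alpha_d\rrbracket|\,[\bm{v}_h]_{\rm n}^2$, and the nonnegative boundary term $\tfrac12|\bm{\beta}\cdot\bm{n}|\,\varphi|\bm{v}_h|^2$. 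The upwind hypothesis $\bm{\beta}\cdot\llbracket\alpha\rrbracket\ge C_{up}|\bm{\beta}\cdot\bm{n}|$ and $\tau_0\ge1$ then deliver $\tfrac{\chi_*\min\{C_{up},1\}}{2}\triplenorm{\bm{v}_h}_{rc}^2$.

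Finally I would absorb the two diffusion cross terms in $\nabla\varphi$, both of size $O(\varepsilon\tilde\chi)$ (the volume one bounded by $\varepsilon\tilde\chi\|\nabla_h\times\bm{v}_h\|_{0,\Omega}\|\bm{v}_h\|_{0,\Omega}$, the facet one controlled by the tangential-jump seminorm and $\|\bm{v}_h\|_{0,\Omega}$), against the $b_0$-coercivity $\chi_* b_0\|\bm{v}_h\|_{0,\Omega}^2$ just produced; the requirement $\chi_*+\kappa\ge 2\tilde\chi^2/(b_0\chi_*)$ in \eqref{eq:chicondition2} is exactly what makes this Young's inequality close. For the continuity bound \eqref{eq:weightinfsup2} I would estimate $\triplenorm{\bm{v}_h\varphi}^2$ term by term using $\varphi\le\chi^*+\kappa$, continuity of $\varphi$, and $\|\nabla_h\times(\bm{v}_h\varphi)\|_{0,\Omega}^2 \le 2(\chi^*+\kappa)^2\|\nabla_h\times\bm{v}_h\|_{0,\Omega}^2 + 2\tilde\chi^2\|\bm{v}_h\|_{0,\Omega}^2$, again absorbing the $\nabla\varphi$ contribution via $\|\bm{v}_h\|_{0,\Omega}^2\le b_0^{-1}\triplenorm{\bm{v}_h}_{rc}^2$ and \eqref{eq:chicondition2}. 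I expect the main obstacle to be the bookkeeping in the reaction--convection facet terms: correctly matching the element-boundary contributions with the four families of interface terms and verifying, through \eqref{eq:beta-id}--\eqref{eq:alpha-id2}, both the exact cancellation of the $\alpha_d$ cross-jumps and of the standard-average pieces, so that only the clean upwind and normal-stabilization terms survive.
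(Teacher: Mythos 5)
Your proposal is correct and follows essentially the same route as the paper's proof: the weighted test function $\bm{v}_h\varphi$, the split into $a_h^d$ and $a_h^{rc}$, the half-integration by parts via $\mathcal{L}_{\bm{\beta},h}(\bm{v}_h\varphi)=\varphi\mathcal{L}_{\bm{\beta},h}\bm{v}_h-(\bm{\beta}\cdot\nabla\varphi)\bm{v}_h$ yielding the $\chi_*\|(\bar\rho+b_0)^{1/2}\bm{v}_h\|_{0,\Omega}^2$ coercivity, the cancellation of the standard-average pieces through \eqref{eq:beta-id}, the vanishing of the $\alpha_d$ cross-jump terms through \eqref{eq:alpha-id2} with $\bm{v}=\bm{w}=\bm{v}_h$, the upwind term from \eqref{eq:alpha-id1}, and the Young absorptions calibrated by \eqref{eq:chicondition1}--\eqref{eq:chicondition2} all match the paper's argument. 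If anything, you are slightly more careful than the paper's displayed computation, which does not show the extra facet cross term $-\theta\varepsilon\langle\llbracket\bm{v}_h\rrbracket_{\rm t},\vavg{\nabla\varphi\times\bm{v}_h}_{\alpha_d}\rangle_{\mathcal{F}_h^0\cup\mathcal{F}_{h,D}^\partial}$ arising from $\nabla_h\times(\bm{v}_h\varphi)$, whereas you track and absorb it correctly.
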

\begin{proof}
Using (\ref{eq:traceinverse}) for the diffusion part, we have
$$
\begin{aligned}
& ~~ a_h^d(\bm{v}_h, \bm{v}_h\varphi)\\
= &~ (\varepsilon\nabla_h\times\bm{v}_h,\varphi \nabla_h\times
  \bm{v}_h ) +\langle
  \varepsilon\eta_Fh_F^{-1}\llbracket\bm{v}_h\rrbracket_{\rm
  t},\llbracket\bm{v}_h\varphi\rrbracket_{\rm
  t}\rangle_{\mathcal{F}_h^0 \cup \mathcal{F}_{h,D}^\partial}\\
  &~ - (\theta+1)\langle
  \vavg{\varepsilon\nabla_h\times\bm{v}_h}_{\alpha_d},\llbracket\bm{v}_h\rrbracket_{\rm
  t}\varphi\rangle_{\mathcal{F}_h^0 \cup \mathcal{F}_{h,D}^\partial} 
   + (\varepsilon\nabla_h\times\bm{v}_h,\nabla\varphi\times\bm{v_h})\\
\ge &~ \varepsilon\Big[
  (\chi_*+\kappa)(\|\nabla_h\times\bm{v}_h\|_{0,\Omega}^2+\eta_0\|\bm{v}_h\|_{\rm
  t}^2)\\
  &~-C_g(\chi^*+\kappa)\alpha_d^{\max}|\theta+1| \cdot
  \|\nabla_h\times\bm{v}_h\|_{0,\Omega}\|\bm{v}_h\|_{\rm
  t}-\tilde{\chi}\|\nabla_h\times\bm{v}_h\|_{0,\Omega}\|\bm{v}_h\|_{0,\Omega}\Big].
\end{aligned}
$$
Further, \eqref{eq:chicondition1} and \eqref{eq:chicondition2} imply
  that 
$$
\begin{aligned}
 C_g(\chi^*+\kappa)\alpha_d^{\max}|(\theta+1)|& \cdot 
  \|\nabla_h\times\bm{v}_h\|_{0,\Omega}\|\bm{v}_h\|_{\rm t} \\
&\le \frac{\chi_*+\kappa}{4} \|\nabla_h\times \bm{v}_h\|_{0,\Omega}^2 +
  \frac{\chi_*+\kappa}{2} \eta_0 \|\bm{v}_h\|_{\rm t}^2, \\
  \tilde{\chi}\|\nabla_h\times\bm{v}_h\|_{0,\Omega}\|\bm{v}_h\|_{0,\Omega} & \le  \frac{\chi_* +
  \kappa}{4}\|\nabla_h\times\bm{v}_h\|_{0,\Omega}^2 +
  \frac{b_0\chi_*}{2} \|\bm{v}_h\|_{0,\Omega}^2,
\end{aligned}
$$
whence 
\begin{equation}\label{eq:infsupdiffusion}
\begin{aligned}
a_h^d(\bm{v}_h,&\bm{v}_h\varphi)\ge \varepsilon \left[
  \frac{\chi_*+\kappa}{2}(\|\nabla_h\times\bm{v}_h\|_{0,\Omega}^2 +
  \|\bm{v}_h\|_{\rm t}^2) - \frac{b_0 \chi_*}{2}
  \|\bm{v}_h\|_{0,\Omega}^2 \right].
\end{aligned}
\end{equation}

As for the reactive-convective part, we first notice that the dual operator of the Lie derivative in \eqref{eq:dual-Lie} satisfies 
$$
\begin{aligned}
  \mathcal{L}_{\bm{\beta},h}(\bm{v}_h\varphi) &= \varphi
  \mathcal{L}_{\bm{\beta},h}\bm{v}_h+
  \nabla\varphi\times(\bm{\beta}\times\bm{v}_h)-(
  \nabla\varphi\cdot\bm{v}_h)\bm{\beta} = \varphi
  \mathcal{L}_{\bm{\beta},h}\bm{v}_h-(\bm{\beta}\cdot\nabla\varphi)\bm{v}_h.
\end{aligned}
$$
Then, using the properties of Lie derivative \eqref{eq:Lie}, we get
\begin{equation}\label{eq:infsupLbeta}
\begin{aligned}
&\quad (L_{\bm{\beta},h}\bm{v}_h+\gamma \bm{v}_h,\bm{v}_h\varphi) -
  \langle \bm{\beta}\cdot\bm{n},\varphi \bm{v}_h\cdot\bm{v}_h
  \rangle_{\mathcal{F}_{h,D}^{\partial,-} \cup
  \mathcal{F}_{h,N}^{\partial,-} }\\
=&~(\frac{1}{2}L_{\bm{\beta},h}\bm{v}_h+\frac{1}{2}\mathcal{L}_{\bm{\beta},h}\bm{v}_h+\gamma
  \bm{v}_h,\bm{v_h}\varphi) 
	-\frac{1}{2}(\bm{\beta}\cdot \nabla \varphi,\bm{v}_h\cdot \bm{v}_h)\\
  &~ ~+\frac{1}{2}\langle \bm{\beta} \varphi,\llbracket \bm{v}_h \cdot
  \bm{v}_h \rrbracket\rangle_{\mathcal{F}_h^0} 
+ \frac12\langle |\bm{\beta} \cdot \bm{n}|\varphi, |\bm{v}_h|^2
  \rangle_{\mathcal{F}_h^\partial}  \\
  \ge&~
  \chi_*\|(\bar{\rho}+b_0)^{1/2}\bm{v}_h\|_{0,\Omega}^2+\frac{1}{2}\langle
  \bm{\beta}\varphi,\llbracket\bm{v_h}\cdot\bm{v_h}\rrbracket\rangle_{\mathcal{F}_h^0}+\frac{1}{2}\langle
  |\bm{\beta}\cdot
  \bm{n}|\varphi,|\bm{v}_h|^2\rangle_{\mathcal{F}_{h}^{\partial} }.
	\end{aligned}
\end{equation}
Here, we apply \eqref{eq:psi} in the last step to deduce
$-\bm{\beta}\cdot\nabla\varphi = (\bm{\beta}\cdot\nabla\psi) \chi\ge
2b_0\chi_*$.  For the terms in $a_h^{rc}(\cdot,\cdot)$ on the interior
facets, using (\ref{eq:weighted-jump}) and (\ref{eq:weighted-jump-t}),
we have
$$
\begin{aligned}
 &\langle \llbracket \bm{\beta}\times \bm{v}_h\rrbracket_{\rm
  t},\vavg{\bm{v}_h\varphi}_{1-\alpha_d}\rangle_{\mathcal{F}_h^0} -
  \langle\left[\bm{v}_h\right]_{\rm{n}},
  \{\bm{\beta}\cdot\bm{v}_h\varphi\}_{1-\alpha}\rangle_{\mathcal{F}_h^0}
  \notag \\
  &~\quad +\langle\llbracket \bm{v}_h\rrbracket_{\rm
  t},\vavg{\bm{\beta}\times
  \bm{v}_h\varphi}_\alpha-\vavg{\bm{\beta}\times
  \bm{v}_h\varphi}_{\alpha_d}\rangle_{\mathcal{F}_h^0}\\
  =&~ \langle \llbracket \bm{\beta}\times \bm{v}_h\rrbracket_{\rm
  t},\vavg{\bm{v}_h\varphi}\rangle_{\mathcal{F}_h^0}-\langle
  \llbracket \bm{\beta}\times \bm{v}_h\rrbracket_{\rm
  t},\llbracket{\bm{v}_h\rrbracket_{\rm
  t}}\times\frac{\llbracket\alpha_d\rrbracket}{2}\varphi\rangle_{\mathcal{F}_h^0}\\
  &~~-\langle\left[\bm{v}_h\right]_{\rm{n}},
  \{\bm{\beta}\cdot\bm{v}_h\varphi\}\rangle_{\mathcal{F}_h^0}
  +\langle\left[\bm{v}_h\right]_{\rm{n}},
  \llbracket\bm{\beta}\cdot\bm{v}_h\rrbracket \cdot
  \frac{\llbracket\alpha\rrbracket}{2}\varphi\rangle_{\mathcal{F}_h^0}
  \\
  &~ ~+\langle\llbracket \bm{v}_h\rrbracket_{\rm
  t},\llbracket{\bm{\beta}\times \bm{v}_h}\rrbracket_{\rm
  t}\times\frac{\llbracket\alpha\rrbracket}{2}\varphi\rangle_{\mathcal{F}_h^0}-\langle\llbracket
  \bm{v}_h\rrbracket_{\rm t},\llbracket{\bm{\beta}\times
  \bm{v}_h}\rrbracket_{\rm
  t}\times\frac{\llbracket\alpha_d\rrbracket}{2}\varphi\rangle_{\mathcal{F}_h^0}
  := \sum_{i=1}^{6}I_i.
\end{aligned}
$$
The estimates of $I_i~(i=1,\ldots,6)$ are given as follows. From \eqref{eq:beta-id},
\begin{equation}\label{eq:infsupbdy}
  I_1+I_3 =-\langle (\bm{\beta}\cdot \bm{n}^+)
  (\bm{v}_h^+-\bm{v}_h^-),\vavg{\bm{v}_h}\varphi\rangle_{\mathcal{F}_h^0}
  =-\frac{1}{2}\langle \bm{\beta}\varphi,
  \llbracket\bm{v}_h\cdot\bm{v}_h\rrbracket\rangle_{\mathcal{F}_h^0}.
\end{equation}
It is obvious that $I_2+I_6=0$. Further, \eqref{eq:alpha-id1} and the upwind choice of
$\alpha$ yield
\begin{equation}\label{eq:infsupstability}
  I_4+I_5 = \frac{1}{2}\langle \bm{\beta}\cdot
  \llbracket\alpha\rrbracket\varphi, |\llbracket
  \bm{v}_h\rrbracket_{F}|^2\rangle_{\mathcal{F}_h^0} 
  \geq \frac{C_{up}}{2} \chi_* \langle |\bm{\beta} \cdot \bm{n}|,
  |\llbracket \bm{v}_h\rrbracket_{F}|^2\rangle_{\mathcal{F}_h^0}.
\end{equation}
The stabilization term gives $\langle \tau_F|\llbracket \alpha -
\alpha_d \rrbracket| [\bm{v}_h]_{\rm n}, [\bm{v}_h \varphi]_{\rm n}
\rangle \geq \chi_* \|\bm{v}_h\|_{\rm n}^2$. Then, combining
\eqref{eq:infsupLbeta}--\eqref{eq:infsupstability}, we obtain
 \begin{equation}\label{eq:infsuprc}
 \begin{aligned}
 & \quad a_h^{rc}(\bm{v}_h,\bm{v}_h\varphi) \\
 & \ge\chi_*\left(
   \|(\bar{\rho}+b_0)^{1/2}\bm{v}_h\|_{0,\Omega}^2 + \|\bm{v}_h\|_{\rm
   n}^2 
 +\frac{\min\{C_{up},1\}}{2}\langle |\bm{\beta} \cdot \bm{n}|,
   |\llbracket
   \bm{v}_h\rrbracket_{F}|^2\rangle_{\mathcal{F}_h}\right).
   \end{aligned}
 \end{equation}
Therefore, \eqref{eq:weightinfsup1} is obtained by combining
(\ref{eq:infsupdiffusion}) and (\ref{eq:infsuprc}). The boundedness
\eqref{eq:weightinfsup2} can be easily obtained by using the
definition of energy norm \eqref{eq:energynorm}, the bound of
$\varphi$ in \eqref{eq:chiinequality} and the condition
\eqref{eq:chicondition}.
\end{proof}

\subsection{Projection} \label{subsec:projection}
In order to obtain the discrete inf-sup condition, we introduce the following projection from
$\bm{L}^2(\mathcal{T}_h)$ onto $\bm{V}_h^k$ satisfying
\begin{subequations}\label{eq:projection}
	\begin{align}
		(\bm{\Pi}_h\bm{u},\bm{v}_h)_T&=(\bm{u},\bm{v}_h)_T \quad & \forall \bm{v}_h\in {\bm{\mathcal{P}}}_{k-1}(T),\label{eq:projectionT}\\
		\langle \bm{\Pi}_h \bm{u}\cdot \bm{n},v_h\rangle_F&=\langle \bm{u}\cdot\bm{n},v_h\rangle_F \quad
		&\forall v_h \in \mathcal{P}_k(F),\forall F\in \partial T\backslash F^\star,\label{eq:projectionF}
	\end{align}
\end{subequations}
where $F^\star$ is an arbitrary facet of $T$ that satisfies
Assumption \ref{as:normal-beta}. We have the following approximation
property for $\bm{\Pi}_h$, see \cite[Prop. 2.1]{cockburn2008optimal} for a proof. 
\begin{lemma}[approximation property] \label{lm:projectionapprox}
	Assume that $\bm{u}\in \bm{H}^{s+1}(T)$ for $s\in [0,k]$ on an element $T\in\mathcal{T}_h$. Then,
	\begin{equation}
		\|\bm{\Pi}_h\bm{u}-\bm{u}\|_{0,T}\le Ch^{s+1}|\bm{u}|_{s+1,T}.
	\end{equation}
\end{lemma}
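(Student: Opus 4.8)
The plan is to reduce \cref{lm:projectionapprox} to two facts: (i) the local problem defined by \eqref{eq:projectionT}--\eqref{eq:projectionF} is \emph{unisolvent} on each $T$, with a stability constant depending only on $k$ and the shape-regularity; and (ii) the resulting operator reproduces $\bm{\mathcal{P}}_k(T)$. Granting these, the estimate follows from the Bramble--Hilbert lemma together with the standard affine scaling, exactly as in \cite[Prop.~2.1]{cockburn2008optimal}.

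First I would check that \eqref{eq:projection} is a \emph{square} system, so that unisolvence is equivalent to uniqueness. On a simplex $T\subset\mathbb{R}^3$ one has $\dim\bm{\mathcal{P}}_k(T)=3\binom{k+3}{3}$, while \eqref{eq:projectionT} imposes $3\binom{k+2}{3}$ conditions and \eqref{eq:projectionF} imposes $3\binom{k+2}{2}$ conditions (three facets, since $F^\star$ is omitted); by Pascal's rule these sum to $3\binom{k+3}{3}$, and the analogous count holds for triangles in the two-dimensional case. For uniqueness, suppose $\bm{p}\in\bm{\mathcal{P}}_k(T)$ satisfies the homogeneous conditions. Let $\lambda_i$ be the barycentric coordinate vanishing on the facet $F_i\neq F^\star$. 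Testing the homogeneous \eqref{eq:projectionF} on $F_i$ against $w=\bm{p}\cdot\bm{n}_i|_{F_i}\in\mathcal{P}_k(F_i)$ gives $\bm{p}\cdot\bm{n}_i\equiv 0$ on $F_i=\{\lambda_i=0\}$, hence $\bm{p}\cdot\bm{n}_i=\lambda_i q_i$ for some $q_i\in\mathcal{P}_{k-1}(T)$. Choosing the admissible interior test function $\bm{v}_h=q_i\bm{n}_i\in\bm{\mathcal{P}}_{k-1}(T)$ in the homogeneous \eqref{eq:projectionT} yields $\int_T\lambda_i q_i^2=0$; since $\lambda_i>0$ in the interior of $T$, this forces $q_i\equiv 0$ and thus $\bm{p}\cdot\bm{n}_i=0$ for the three facets $F_i\neq F^\star$. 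As the three outward normals of a nondegenerate simplex are linearly independent, $\bm{p}=\bm{0}$. Note that this argument is insensitive to which facet is designated $F^\star$, so Assumption~\ref{as:normal-beta} plays no role in the well-posedness of $\bm{\Pi}_h$; its role is reserved for the stability analysis.

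With unisolvence established, the reproduction property $\bm{\Pi}_h\bm{q}=\bm{q}$ for $\bm{q}\in\bm{\mathcal{P}}_k(T)$ is immediate, since both sides satisfy \eqref{eq:projection}. To get the estimate I would pull $T$ back to a reference simplex $\hat{T}$ via the affine map $\bm{x}=B_T\hat{\bm{x}}+\bm{b}_T$. The interior moments \eqref{eq:projectionT} transform into interior moments on $\hat{T}$, while \eqref{eq:projectionF} becomes $\hat{\bm{p}}\cdot(B_T^{-T}\hat{\bm{n}})=0$ on the reference facets. The transformed directions $B_T^{-T}\hat{\bm{n}}$ remain linearly independent with a condition number controlled by the shape-regularity, so the pulled-back operator $\hat{\bm{\Pi}}$ is uniformly well-posed and bounded; using the trace theorem on $\hat{T}$ one obtains $\|\hat{\bm{\Pi}}\hat{\bm{u}}\|_{0,\hat{T}}\le C\|\hat{\bm{u}}\|_{1,\hat{T}}$ with $C$ depending only on $k$ and the shape-regularity. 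Writing $\hat{\bm{u}}-\hat{\bm{\Pi}}\hat{\bm{u}}=(\hat{\bm{u}}-\hat{\bm{q}})-\hat{\bm{\Pi}}(\hat{\bm{u}}-\hat{\bm{q}})$ for arbitrary $\hat{\bm{q}}\in\bm{\mathcal{P}}_k(\hat{T})$ and taking the infimum over $\hat{\bm{q}}$, the Bramble--Hilbert lemma gives $\|\hat{\bm{u}}-\hat{\bm{\Pi}}\hat{\bm{u}}\|_{0,\hat{T}}\le C|\hat{\bm{u}}|_{s+1,\hat{T}}$ for $0\le s\le k$. Mapping back with the usual bounds $\|B_T\|\lesssim h_T$, $\|B_T^{-1}\|\lesssim h_T^{-1}$, $|\det B_T|\simeq h_T^3$ then produces $\|\bm{u}-\bm{\Pi}_h\bm{u}\|_{0,T}\le C h^{s+1}|\bm{u}|_{s+1,T}$.

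I expect the conceptual crux to be the uniqueness step, which is where the specific choice of degrees of freedom (omitting $F^\star$ to make the system square) is exploited; fortunately it collapses to the clean positivity argument $\int_T\lambda_i q_i^2=0$. The genuine technical care lies instead in the scaling step: because the facet functionals in \eqref{eq:projectionF} involve the geometric normals, the pulled-back conditions carry the $T$-dependent directions $B_T^{-T}\hat{\bm{n}}$, and one must argue that the well-posedness constant of $\hat{\bm{\Pi}}$ is uniform over the (compact, shape-regular) family of admissible configurations so that the final constant $C$ is independent of $T$ and of $\varepsilon$.
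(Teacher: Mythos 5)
Your proposal is correct and takes essentially the same route as the paper, which does not spell out a proof but delegates it to \cite[Prop.~2.1]{cockburn2008optimal}: that argument is exactly your chain of a square system (your dimension count via Pascal's rule is right in both 2D and 3D), uniqueness via $\bm{p}\cdot\bm{n}_i=\lambda_i q_i$ and the positivity test $\int_T\lambda_i q_i^2=0$ together with linear independence of the remaining facet normals, polynomial reproduction, and affine scaling with Bramble--Hilbert, including the genuine technical point you flag that the facet functionals are not affine-invariant so uniformity of the pulled-back operator over the shape-regular family must be argued. Your side observations are also accurate: the choice of $F^\star$ is immaterial for unisolvence, and Assumption~\ref{as:normal-beta} enters only the stability analysis (the estimate of $K_3$ in Lemma~\ref{lm:diffestimate}), not the well-posedness or approximation property of $\bm{\Pi}_h$.
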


With this projection, we also need to estimate the difference between
$\bm{v}_h\varphi$ and the corresponding projection
$\bm{\Pi}_h(\bm{v}_h\varphi)\in \bm{V}_h^k$ which is established in
the following lemma.
\begin{lemma}[superconvergence] \label{lm:diffPivphi}
  Let $\varphi\in W^{k+1,\infty}(\Omega)$ be the function defined in
  (\ref{eq:weightfunction}). Then, for any $\bm{v}_h\in \bm{V}_h^k$ and
  $\kappa\in \mathbb{R}^+$,
\begin{subequations}
\begin{align} 
    \|\bm{v}_h\varphi-\bm{\Pi}_h(\bm{v}_h\varphi)\|_{0,\Omega} & \le
    Ch\|\chi\|_{k+1,\infty,\Omega}\|\bm{v}_h\|_{0,\Omega},    \label{eq:phivapprox1} \\
    |\bm{v}_h\varphi-\bm{\Pi}_h(\bm{v}_h\varphi)|_{1,\Omega} & \le
    C\|\chi\|_{k+1,\infty,\Omega}\|\bm{v}_h\|_{0,\Omega},  \label{eq:phivapprox2} \\
  \bigg(	\sum_{F\in
    \mathcal{F}_h}\|\bm{v_h}\varphi -
    \bm{\Pi}_h(\bm{v}_h\varphi)\|_{0,F}^2\bigg)^{1/2} & \le
    Ch^{1/2}\|\chi\|_{k+1,\infty,\Omega}\|\bm{v}_h\|_{0,\Omega},  \label{eq:phivapprox3}
\end{align}
\end{subequations}
where the constant $C$ depends only on $k$, shape-regularity constant, and $\Omega$.
\end{lemma}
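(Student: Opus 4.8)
The plan is to reduce all three bounds to the local approximation estimate of Lemma~\ref{lm:projectionapprox}, combined with standard inverse and trace inequalities, exploiting two structural facts: on each element $\bm{v}_h$ is a polynomial of degree $k$, and $\varphi=\chi+\kappa$ differs from its smooth part $\chi$ only by the constant $\kappa$. The latter is what ultimately removes $\kappa$ from the right-hand sides.

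First I would establish \eqref{eq:phivapprox1} elementwise. On a fixed $T$ I apply Lemma~\ref{lm:projectionapprox} to $\bm{u}=\bm{v}_h\varphi\in\bm{H}^{k+1}(T)$ (which is legitimate since $\varphi\in W^{k+1,\infty}(\Omega)$ and $\bm{v}_h$ is polynomial) with $s=k$, giving $\|\bm{v}_h\varphi-\bm{\Pi}_h(\bm{v}_h\varphi)\|_{0,T}\le Ch^{k+1}|\bm{v}_h\varphi|_{k+1,T}$. The key observation is that $\kappa\bm{v}_h\in\bm{\mathcal{P}}_k(T)$ has vanishing derivatives of order $k+1$, so $|\bm{v}_h\varphi|_{k+1,T}=|\bm{v}_h\chi|_{k+1,T}$; this is precisely why the final constant is independent of $\kappa$. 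Expanding $\partial^\alpha(\bm{v}_h\chi)$ by the Leibniz rule, the top-order term $\partial^\alpha\bm{v}_h$ vanishes, so every surviving term carries a derivative $\partial^\beta\bm{v}_h$ with $|\beta|\le k$; an inverse inequality $\|\partial^\beta\bm{v}_h\|_{0,T}\le Ch^{-|\beta|}\|\bm{v}_h\|_{0,T}$ and the bound $\|\partial^{\alpha-\beta}\chi\|_{0,\infty,T}\le\|\chi\|_{k+1,\infty,\Omega}$ yield $|\bm{v}_h\chi|_{k+1,T}\le Ch^{-k}\|\chi\|_{k+1,\infty,\Omega}\|\bm{v}_h\|_{0,T}$ (for $h\le1$ the factor $h^{-k}$ dominates every $h^{-|\beta|}$). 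Combining the two estimates gives the local $O(h)$ bound, and summing the squares over $T\in\mathcal{T}_h$ produces \eqref{eq:phivapprox1}.

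For the $H^1$-seminorm bound \eqref{eq:phivapprox2} I introduce the degree-$k$ polynomial $\bm{p}:=(\kappa+\bar\chi)\bm{v}_h$, where $\bar\chi$ is the mean value of $\chi$ on $T$, and split $\bm{v}_h\varphi-\bm{\Pi}_h(\bm{v}_h\varphi)=(\bm{v}_h\varphi-\bm{p})+(\bm{p}-\bm{\Pi}_h(\bm{v}_h\varphi))$. The smooth remainder $\bm{v}_h\varphi-\bm{p}=\bm{v}_h(\chi-\bar\chi)$ is estimated by the Leibniz rule, using $\|\chi-\bar\chi\|_{0,\infty,T}\le Ch|\chi|_{1,\infty,T}$ together with an inverse inequality on $\bm{v}_h$, which gives $|\bm{v}_h(\chi-\bar\chi)|_{1,T}\le C\|\chi\|_{k+1,\infty,\Omega}\|\bm{v}_h\|_{0,T}$. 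The second term lies in $\bm{\mathcal{P}}_k(T)$, so an inverse inequality converts its $H^1$-seminorm into an $L^2$-norm at the cost of $h^{-1}$, and the triangle inequality bounds that $L^2$-norm by $\|\bm{p}-\bm{v}_h\varphi\|_{0,T}+\|\bm{v}_h\varphi-\bm{\Pi}_h(\bm{v}_h\varphi)\|_{0,T}$, both of which are $O(h\|\chi\|_{k+1,\infty,\Omega}\|\bm{v}_h\|_{0,T})$ by the previous step; the factors $h^{-1}$ and $h$ cancel. Summing over $T$ gives \eqref{eq:phivapprox2}.

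Finally, \eqref{eq:phivapprox3} follows from the scaled multiplicative trace inequality $\|\bm{w}\|_{0,F}^2\le C(h^{-1}\|\bm{w}\|_{0,T}^2+h|\bm{w}|_{1,T}^2)$ applied to $\bm{w}=\bm{v}_h\varphi-\bm{\Pi}_h(\bm{v}_h\varphi)$ on each $F\subset\partial T$: inserting the local forms of \eqref{eq:phivapprox1} and \eqref{eq:phivapprox2} makes both terms of size $Ch\|\chi\|_{k+1,\infty,\Omega}^2\|\bm{v}_h\|_{0,T}^2$, and summing over facets (each shared by at most two elements) and taking the square root yields the $h^{1/2}$ rate. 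I expect the only delicate aspect to be bookkeeping rather than conceptual difficulty: one must verify that every inverse-, trace-, and approximation-constant depends solely on $k$, the shape-regularity of $\mathcal{T}_h$, and $\Omega$, and keep careful track of the powers of $h$ so that the interplay between the superconvergent $O(h)$ $L^2$-bound and the $h^{-1}$ in the inverse inequality lands exactly on the stated rates.
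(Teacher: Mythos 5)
Your proof is correct and follows essentially the same route the paper takes: the paper's own ``proof'' is a one-line deferral to \cite[Lemma 4.2]{ayuso2009discontinuous}, whose argument is precisely your combination of the approximation property of $\bm{\Pi}_h$ (Lemma \ref{lm:projectionapprox}) with Leibniz-rule, inverse, and scaled trace inequalities, the superconvergent factor $h$ coming from the vanishing of $\partial^{\alpha}\bm{v}_h$ for $|\alpha|=k+1$. The only cosmetic difference is your handling of $\kappa$ via the $(k+1)$-seminorm identity and the mean-value split $\bm{p}=(\kappa+\bar\chi)\bm{v}_h$, rather than invoking that $\bm{\Pi}_h$ reproduces $\bm{\mathcal{P}}_k(T)$; this has the small advantage of using nothing about the tailored projection \eqref{eq:projection} beyond the estimate of Lemma \ref{lm:projectionapprox} itself.
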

\begin{proof}
  The proof of this lemma is straightforward through the results in
  Lemma \ref{lm:projectionapprox}, trace inequality and inverse
  inequality. We refer the readers to 
  \cite[Lemma 4.2]{ayuso2009discontinuous} for a detailed proof.
\end{proof}

We are now in the position to give the following results.
\begin{lemma}[projection difference] \label{lm:diffestimate}
	With the hypothesis of Lemma \ref{lm:weightinfsup}, there exist two positive constants $\chi_1,\chi_2$ depending on $\chi,b_0,\theta,\alpha,\alpha_d$ and $\|\bm{\beta}\|_{1,\infty,\Omega}$, such that for any value of $\kappa$, the corresponding $\varphi$ verifies
\begin{subequations} \label{eq:projection-difference}
\begin{align}
	a_h^d(\bm{v}_h,\bm{v}_h\varphi-\bm{\Pi}_h(\bm{v}_h\varphi)) &\le \chi_1\triplenorm{\bm{v}_h}_{d}\triplenorm{\bm{v}_h}_{rc}, \quad \forall\bm{v}_h\in \bm{V}_h^k, \label{eq:diffestimatediff}\\
	a_h^{rc}(\bm{v}_h,\bm{v}_h\varphi-\bm{\Pi}_h(\bm{v}_h\varphi)) &\le \chi_2h^{1/2}\triplenorm{\bm{v}_h}_{rc}^2, \quad \forall\bm{v}_h\in \bm{V}_h^k.\label{eq:diffestimaterc}
\end{align}
\end{subequations}
\end{lemma}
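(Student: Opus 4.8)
The plan is to estimate both bilinear forms applied to $\bm{v}_h$ and the projection error $\bm{e}_h:=\bm{v}_h\varphi-\bm{\Pi}_h(\bm{v}_h\varphi)$, the common object on both left-hand sides. The engine is Lemma~\ref{lm:diffPivphi} together with its elementwise versions (read off from Lemma~\ref{lm:projectionapprox} and the trace/inverse inequalities), namely $\|\bm{e}_h\|_{0,T}\lesssim h_T\|\chi\|\,\|\bm{v}_h\|_{0,T}$, $|\bm{e}_h|_{1,T}\lesssim\|\chi\|\,\|\bm{v}_h\|_{0,T}$ and $\|\bm{e}_h\|_{0,F}\lesssim h_T^{1/2}\|\chi\|\,\|\bm{v}_h\|_{0,T}$ for $F\subset\partial T$. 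I will lean on three structural facts about $\bm{\Pi}_h$: the interior orthogonality $(\bm{e}_h,\bm{q})_T=0$ for $\bm{q}\in\bm{\mathcal{P}}_{k-1}(T)$ from \eqref{eq:projectionT}; the facet property that $\bm{e}_h\cdot\bm{n}$ is $L^2(F)$-orthogonal to $\mathcal{P}_k(F)$ on every facet other than $F^\star$, from \eqref{eq:projectionF}; and the fact that $\bm{\Pi}_h$ reproduces $\bm{\mathcal{P}}_k(T)$. Finally, since $\bar{\rho}+b_0\ge b_0>0$, the norm \eqref{eq:energynorm} gives $\|\bm{v}_h\|_{0,\Omega}\le b_0^{-1/2}\triplenorm{\bm{v}_h}_{rc}$, which is what turns the $\bm{e}_h$-factors into $\triplenorm{\bm{v}_h}_{rc}$.

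For \eqref{eq:diffestimatediff} I would bound the four terms of $a_h^d(\bm{v}_h,\bm{e}_h)$ in \eqref{eq:ad} by Cauchy--Schwarz, each time splitting $\varepsilon=\varepsilon^{1/2}\cdot\varepsilon^{1/2}$ so that the $\bm{v}_h$-factor assembles $\triplenorm{\bm{v}_h}_d$ and the surplus $\varepsilon^{1/2}$ travels with the $\bm{e}_h$-factor. The volume and penalty terms and the consistency term $-\langle\vavg{\varepsilon\nabla_h\times\bm{v}_h}_{\alpha_d},\llbracket\bm{e}_h\rrbracket_{\rm t}\rangle$ are immediate from $|\bm{e}_h|_1\lesssim\|\chi\|\|\bm{v}_h\|_0$, the trace/inverse bound of \eqref{eq:traceinverse} for the $\bm{v}_h$-side, and $\sum_F h_F^{-1}\|\llbracket\bm{e}_h\rrbracket_{\rm t}\|_{0,F}^2\lesssim\|\chi\|^2\|\bm{v}_h\|_0^2$; each is $\lesssim\varepsilon^{1/2}\|\chi\|\,\triplenorm{\bm{v}_h}_d\triplenorm{\bm{v}_h}_{rc}$, with no gain in $h$, which is all \eqref{eq:diffestimatediff} requires. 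The delicate term is the adjoint-consistency term $-\theta\langle\llbracket\bm{v}_h\rrbracket_{\rm t},\vavg{\varepsilon\nabla_h\times\bm{e}_h}_{\alpha_d}\rangle$, which needs $\nabla_h\times\bm{e}_h$ on facets; since $\bm{e}_h\notin\bm{V}_h^k$ no discrete inverse-trace inequality is available, and I expect this to be the main obstacle of part~(a). I would clear it with the second-order bound $h_T|\bm{e}_h|_{2,T}\lesssim\|\chi\|\|\bm{v}_h\|_{0,T}$: writing $\bm{e}_h=(I-\bm{\Pi}_h)\bigl(\bm{v}_h(\varphi-\bar{\varphi}_T)\bigr)$ with $\bar{\varphi}_T$ the mean of $\varphi$ on $T$ (admissible because $\bm{\Pi}_h$ reproduces $\bm{\mathcal{P}}_k$, so $\bm{v}_h\bar{\varphi}_T$ is annihilated), bounding the smooth factor by Leibniz plus inverse inequalities and the polynomial part $\bm{\Pi}_h(\cdot)$ by an inverse inequality fed by the $L^2$-superconvergence bound. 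A continuous trace inequality then gives $\sum_F h_F\|\vavg{\nabla_h\times\bm{e}_h}_{\alpha_d}\|_{0,F}^2\lesssim\|\chi\|^2\|\bm{v}_h\|_0^2$ and the $\theta$-term joins the same estimate (the $\varepsilon^{1/2}$ prefactors are harmless once $\varepsilon$ is bounded, leaving $\chi_1$ independent of $\varepsilon$).

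For \eqref{eq:diffestimaterc} the right-hand side carries an extra $h^{1/2}$, so every term must surrender a power of $h$. The volume term $(L_{\bm{\beta},h}\bm{v}_h+\gamma\bm{v}_h,\bm{e}_h)$ would naively lose $h^{-1}$ through the derivative in $L_{\bm{\beta},h}\bm{v}_h$; instead I would freeze $\bm{\beta}$ at a point $\bar{\bm{\beta}}_T$, observe $L_{\bar{\bm{\beta}}_T,h}\bm{v}_h\in\bm{\mathcal{P}}_{k-1}(T)$, and annihilate it by the interior orthogonality \eqref{eq:projectionT}, leaving only the commutator with $\|\bm{\beta}-\bar{\bm{\beta}}_T\|_{0,\infty,T}\lesssim h_T\|\bm{\beta}\|_{1,\infty}$, which with $\|\bm{e}_h\|_0\lesssim h\|\chi\|\|\bm{v}_h\|_0$ is $O(h)$. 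For the facet terms of \eqref{eq:arc} I would decompose the weighted averages via \eqref{eq:weighted-jumps} and combine the three advection facet terms through \eqref{eq:beta-id}, \eqref{eq:alpha-id1}, \eqref{eq:alpha-id2}, exactly as in Lemma~\ref{lm:weightinfsup} but now with distinct trial $\bm{v}_h$ and test $\bm{e}_h$. This yields a group weighted by $\bm{\beta}\cdot\bm{n}$ (since $\bm{\beta}\cdot\llbracket\alpha\rrbracket=[\alpha]_F\,\bm{\beta}\cdot\bm{n}$), controlled by the $|\bm{\beta}\cdot\bm{n}|^{1/2}$-part of $\triplenorm{\bm{v}_h}_{rc}$ with $\bm{e}_h$-traces supplying $h^{1/2}$ from \eqref{eq:phivapprox3}, together with an antisymmetric residual $([\alpha]_F-[\alpha_d]_F)\,\bm{\beta}\cdot(\llbracket\bm{v}_h\rrbracket_F[\bm{e}_h]_{\rm n}-\llbracket\bm{e}_h\rrbracket_F[\bm{v}_h]_{\rm n})$ that vanished in Lemma~\ref{lm:weightinfsup} because test and trial agreed. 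The crucial point is $[\alpha]_F-[\alpha_d]_F=\llbracket\alpha-\alpha_d\rrbracket\cdot\bm{n}^+$, so this residual carries precisely the weight $|\llbracket\alpha-\alpha_d\rrbracket|$ of $\|\bm{v}_h\|_{\rm n}$; its $\llbracket\bm{e}_h\rrbracket_F[\bm{v}_h]_{\rm n}$ half then closes by a weighted Cauchy--Schwarz against \eqref{eq:phivapprox3}, and the stabilization term in \eqref{eq:arc} and the inflow-boundary term fall to the same two weights, all at $O(h^{1/2})$.

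The genuinely hard half is the remaining piece $([\alpha]_F-[\alpha_d]_F)\,\bm{\beta}\cdot\llbracket\bm{v}_h\rrbracket_F\,[\bm{e}_h]_{\rm n}$, in which $\llbracket\bm{v}_h\rrbracket_F$ is multiplied by the full $\bm{\beta}$ and is \emph{not} controlled by $\triplenorm{\bm{v}_h}_{rc}$ once $|\bm{\beta}\cdot\bm{n}|$ degenerates; this is exactly what the special projection absorbs. On facets of $\mathcal{F}_h^\star$ the normal-domination Assumption~\ref{as:normal-beta} gives $\|\bm{\beta}\|_{0,\infty,F}\lesssim|\bm{\beta}\cdot\bm{n}|$, so $\llbracket\bm{v}_h\rrbracket_F$ may be reweighted to $|\bm{\beta}\cdot\bm{n}|^{1/2}$ and the term closes as before. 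On every other facet --- which, by construction of $F^\star$, is never a skipped facet of either neighbour --- I would use the facet moment property \eqref{eq:projectionF}: $[\bm{e}_h]_{\rm n}$ is orthogonal to $\mathcal{P}_k(F)$, so subtracting the $\mathcal{P}_k(F)$-interpolant of the non-polynomial coefficient $\bm{\beta}$ costs only $\|\bm{\beta}-\mathrm{const}\|_{0,\infty,F}\lesssim h_F\|\bm{\beta}\|_{1,\infty}$, trading the uncontrolled weight for an extra power of $h$ and giving $O(h)$. Collecting the elementwise bounds and summing over $\mathcal{T}_h$ yields \eqref{eq:diffestimaterc} with $\chi_2$ depending only on the stated data, the $h^{1/2}$ coming uniformly from \eqref{eq:phivapprox3} and these commutators. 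The two principal difficulties, as anticipated, are the curl-of-$\bm{e}_h$ trace in part~(a) and this degenerate facet residual in part~(b), both dispatched by the structural properties of $\bm{\Pi}_h$.
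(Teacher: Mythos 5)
Your proposal is correct and follows essentially the same route as the paper's proof: the same annihilation of the frozen-coefficient advection term via the interior moments \eqref{eq:projectionT}, the same regrouping of the facet terms through \eqref{eq:beta-id} and \eqref{eq:alpha-id1}--\eqref{eq:alpha-id2} into the $|\bm{\beta}\cdot\bm{n}|$-weighted group, the $\|\cdot\|_{\rm n}$-weighted group, and the degenerate residual, and the same splitting of that residual into $\mathcal{F}_h^\star$ (handled by Assumption \ref{as:normal-beta}) and $\mathcal{F}_h^0\setminus\mathcal{F}_h^\star$ (handled by the facet moments \eqref{eq:projectionF} after replacing $\bm{\beta}$ by a \emph{constant} approximation on each facet so that the product stays in $\mathcal{P}_k(F)$ --- your phrase ``$\mathcal{P}_k(F)$-interpolant of $\bm{\beta}$'' should be read this way, as your own error bound $\|\bm{\beta}-\mathrm{const}\|_{0,\infty,F}\lesssim h_F\|\bm{\beta}\|_{1,\infty}$ already indicates, since a genuine degree-$k$ interpolant would push the product to degree $2k$ and break the orthogonality). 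Your explicit second-order bound $h_T|\bm{\xi}|_{2,T}\lesssim \|\chi\|_{k+1,\infty,\Omega}\|\bm{v}_h\|_{0,T}$ for the facet traces of $\nabla_h\times\bigl(\bm{v}_h\varphi-\bm{\Pi}_h(\bm{v}_h\varphi)\bigr)$ in the $\theta$-term is a correct filling-in of a detail the paper leaves implicit behind Lemma \ref{lm:diffPivphi} and the trace/inverse inequalities.
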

	
\begin{proof}
To simplify the notation, we write $\bm{\xi}:= \bm{v}_h\varphi-\bm{\Pi}_h(\bm{v}_h\varphi)$.
\paragraph{Step 1: diffusive part} Using inverse inequality, trace inequality and (\ref{eq:phivapprox2})-(\ref{eq:phivapprox3}), we have
\begin{equation}\label{eq:diffencediffusion}
	\begin{aligned}
		\quad &~ a^d_h(\bm{v}_h,\bm{\xi})\\
		\le&~ \varepsilon\|\nabla_h\times \bm{v}_h\|_{0,\Omega}\| \nabla_h\times\bm{\xi}\|_{0,\Omega}+\varepsilon|\theta|\alpha_d^{\max}\big(\sum_{F\notin \Gamma_N} h_F\|\nabla_h\times \bm{\xi}\|_{0,F}^2\big)^{1/2}\|\bm{v}_h\|_{\rm t}\\
		&\quad+\varepsilon\alpha_d^{\max}\big(\sum_{F\notin \Gamma_N} h_F\|\nabla_h\times \bm{v}_h\|_{0,F}^2\big)^{1/2}\|\xi\|_{\rm t}+\varepsilon\eta^{\max} \|\bm{v}_h\|_{\rm t}\|\xi\|_{\rm t}\\
		\le&~C\varepsilon^{1/2}\|\chi\|_{k+1,\infty,\Omega}\triplenorm{\bm{v}_h}_{d}\triplenorm{\bm{v}_h}_{rc},
	\end{aligned}
\end{equation}
where $\eta^{\max}=\sup_{F\notin \Gamma_N}\eta_F$ and (\ref{eq:diffencediffusion}) follows with $\chi_1=C\|\chi\|_{k+1,\infty,\Omega}$ since $\varepsilon$ is a bounded constant.
\paragraph{Step 2: reactive-convective part}
Let $\bm{P}_h^0\bm{\beta}$ be the piecewise $L^2$-projection of $\bm{\beta}$ onto constants. By definition of $\bm{\Pi}_h$ (\ref{eq:projectionT}), it holds that
$$
( -\bm{P}_h^0\bm{\beta}\times (\nabla_h\times\bm{v}_h)+(\nabla_h \bm{v}_h)^T\bm{P}_h^0\bm{\beta},\bm{\xi})_T =0 \quad \forall T \in \mathcal{T}_h,
$$
since the leftmost component belongs to $\bm{\mathcal{P}}_{k-1}(T)$. Then, we have
\begin{equation}\label{eq:rcdiff}
\begin{aligned}
	&~~ a^{rc}_h (\bm{v}_h,\bm{\xi})\\
	=&~([\gamma+(\nabla\bm{\beta})^T]\bm{v}_h,\bm{\xi})+( -[\bm{\beta}-\bm{P}_h^0\bm{\beta}]\times (\nabla_h \times\bm{v}_h)+(\nabla_h \bm{v}_h)^T[\bm{\beta}-\bm{P}_h^0\bm{\beta}],\bm{\xi})\\
	&~ +\langle \llbracket \bm{\beta}\times \bm{v}_h\rrbracket_{\rm t},\vavg{\bm{\xi}}\rangle_{\mathcal{F}_h^0}-\langle \llbracket \bm{\beta}\times \bm{v}_h\rrbracket_{\rm t},\llbracket{\bm{\xi}\rrbracket_{\rm t}}\times\frac{\llbracket\alpha_d\rrbracket}{2}\rangle_{\mathcal{F}_h^0}\\
	&~ -\langle\left[\bm{v}_h\right]_{\rm{n}},
	\{\bm{\beta}\cdot\bm{\xi}\}\rangle_{\mathcal{F}_h^0} +\langle\left[\bm{v}_h\right]_{\rm{n}},
	\llbracket\bm{\beta}\cdot\bm{\xi}\rrbracket\cdot\frac{\llbracket\alpha\rrbracket}{2}\rangle_{\mathcal{F}_h^0} \\
	&~ +\langle\llbracket \bm{v}_h\rrbracket_{\rm t},\llbracket{\bm{\beta}\times \bm{\xi}}\rrbracket_{\rm t}\times\frac{\llbracket\alpha\rrbracket}{2}\rangle_{\mathcal{F}_h^0}-\langle\llbracket \bm{v}_h\rrbracket_{\rm t},\llbracket{\bm{\beta}\times \bm{\xi}}\rrbracket_{\rm t}\times\frac{\llbracket\alpha_d\rrbracket}{2}\rangle_{\mathcal{F}_h^0}\\
	&~ +\langle \tau_F|\llbracket\alpha-\alpha_d\rrbracket|[\bm{v}_h]_{\rm{n}},[\bm{\xi}]_{\rm{n}}\rangle_{\mathcal{F}_h^0}  -\langle \bm{\beta}\cdot\bm{n},\bm{v}_h\cdot\bm{\xi}\rangle_{\mathcal{F}_{h,D}^{\partial,-} \cup \mathcal{F}_{h,N}^{\partial,-} }\notag 
	=: \sum_{i=1}^{10} J_i.
\end{aligned}
\end{equation}
The estimates of $J_i~(i=1,\cdots,10)$ are given as follows. 

\noindent{\it Step 2-1: terms without weight}. Firstly, (\ref{eq:phivapprox1}) implies
\begin{equation}\label{eq:approxJ1}
J_1\le Ch\|\chi\|_{k+1,\infty,\Omega}\|\bm{v}_h\|_{0,\Omega}^2.
\end{equation}
The approximation property of $\bm{P}_h^0$, (\ref{eq:phivapprox2}) and the inverse inequality give
\begin{equation}\label{eq:approxJ2}
	J_2\le Ch^2\|\chi\|_{k+1,\infty,\Omega}|\bm{\beta}|_{1,\infty,\Omega}|\bm{v}_h|_{1,\Omega}\|\bm{v}_h\|_{0,\Omega}\le Ch\|\chi\|_{k+1,\infty,\Omega}|\bm{v}_h\|_{0,\Omega}^2.
\end{equation}
Using \eqref{eq:beta-id} and \eqref{eq:phivapprox3}, we deduce that 
\begin{equation}\label{eq:approxJ3510}
	\begin{aligned}
	J_3+J_5+J_{10}=&-\langle (\bm{\beta}\cdot \bm{n}^+)\llbracket\bm{v}_h\rrbracket_{F},\vavg{\bm{\xi}}\rangle_{\mathcal{F}_h^0}-\langle \bm{\beta}\cdot\bm{n},\bm{v}_h\cdot\bm{\xi}\rangle_{\mathcal{F}_{h,D}^{\partial,-} \cup \mathcal{F}_{h,N}^{\partial,-} }\\
	\le  &~Ch^{1/2}\|\chi\|_{k+1,\infty,\Omega}\big(\sum_{F\in \mathcal{F}_h}\||\bm{\beta}\cdot\bm{n}|^{1/2}\llbracket\bm{v}_h\rrbracket_{F}\|_{0,F}^2\big)^{1/2}\|\bm{v}_h\|_{0,\Omega}.
	\end{aligned}
\end{equation}

\noindent{\it Step 2-2: weighed terms}. Further, \eqref{eq:alpha-id1} and \eqref{eq:alpha-id2} leads to  
\begin{equation}\label{eq:approxJrest}
\begin{aligned}
&~ J_4+J_6+J_7+J_8\\
=&~\frac{1}{2}\langle(\bm{\beta}\cdot\llbracket\alpha\rrbracket)\llbracket\bm{v}_h\rrbracket_{F},\llbracket\bm{\xi}\rrbracket_{F}\rangle_{\mathcal{F}_h^0}+\langle\frac{[\alpha-\alpha_d]_F}{2}\bm{\beta}\cdot \llbracket\bm{\xi}\rrbracket_{F} ,[\bm{v}_h]_{\rm n}\rangle_{\mathcal{F}_h^0}\\
&-\langle\frac{[\alpha-\alpha_d]_F}{2}\bm{\beta}\cdot \llbracket\bm{v}_h\rrbracket_{F}, [\bm{\xi}]_{\rm n}\rangle_{\mathcal{F}_h^0} =:K_1+K_2+K_3.
\end{aligned}
\end{equation}
In light of definition of $\triplenorm{\cdot}_{rc}$ and \eqref{eq:phivapprox3}, we get
\begin{align}
	K_1 & \le Ch^{1/2}\|\chi\|_{k+1,\infty,\Omega}\big(\sum_{F\in \mathcal{F}_h}\||\bm{\beta}\cdot\bm{n}|^{1/2}\llbracket\bm{v}_h\rrbracket_{F}\|_{0,F}
	^2\big)^{1/2}\|\bm{v}_h\|_{0,\Omega}, \label{eq:approxK1} \\
	K_2+J_9 & \le Ch^{1/2}\|\chi\|_{k+1,\infty,\Omega}\|\bm{v}_h\|_{\rm n}\|\bm{v}_h\|_{0,\Omega}. \label{eq:approxK2J9}
\end{align}

We now consider the estimate of the most difficult term $K_3$, where $\bm{\beta} \cdot \llbracket \bm{v}_h \rrbracket_F$ does not appear in $\triplenorm{\cdot}_{rc}$ so that the simple argument by Cauchy-Schwarz inequality does not work.  Note that $\mathcal{F}_h^0$ can be divided into $\mathcal{F}_h^\star$ and $\mathcal{F}_h^0 \setminus \mathcal{F}_h^\star$, where the projection $\bm{\Pi}_h$ on the latter facet preserves the normal moments up to polynomial of order $k$, due to \eqref{eq:projectionF}. Therefore, we have
$$
-\langle (\bm{P}_{\mathcal{F},h}^{0}\bm{\beta})\cdot \llbracket\bm{v}_h\rrbracket_{F}, [\bm{\xi}]_{\rm n}\rangle_{\mathcal{F}_h^0/\mathcal{F}_h^\star}=0,
$$
where $\bm{P}_{\mathcal{F},h}^0$ is the $L^2$-projection in the piecewise constant space of $\mathcal{F}_h$. Then, we have
\begin{equation}\label{K3_3}
\begin{aligned}
& -\langle\frac{[\alpha-\alpha_d]_F}{2}\bm{\beta}\cdot \llbracket\bm{v}_h\rrbracket_{F}, [\bm{\xi}]_{\rm n}\rangle_{\mathcal{F}_h^0/\mathcal{F}_h^\star}\\
		=&-\langle\frac{[\alpha-\alpha_d]_F}{2}(\bm{\beta}-\bm{P}_{\mathcal{F},h}^0\bm{\beta})\cdot \llbracket\bm{v}_h\rrbracket_{F}, [\bm{\xi}]_{\rm n}\rangle_{\mathcal{F}_h^0/\mathcal{F}_h^\star}\\
		\le&~ Ch|\bm{\beta}|_{1,\infty,\Omega}\|\chi\|_{k+1,\infty,\Omega}\|\bm{v}_h\|_{0,\Omega}^2,
	\end{aligned}
\end{equation}
where the approximation property of $\bm{P}_{\mathcal{F},h}^0$, trace inequality, inverse inequality and \eqref{eq:phivapprox3} are used. Further, using  Assumption \ref{as:normal-beta} (normal domination), we obtain
\begin{equation}\label{eq:approxK3_1}
	\begin{aligned}
	&~ -\langle\frac{[\alpha-\alpha_d]_F}{2}\bm{\beta}\cdot \llbracket\bm{v}_h\rrbracket_{F}, [\bm{\xi}]_{\rm n}\rangle_{\mathcal{F}_h^\star}\\
	\le&~ Ch^{1/2}\|\chi\|_{k+1,\infty,\Omega}\big(\sum_{F\in \mathcal{F}_h^\star}\|\bm{\beta}\cdot\llbracket \bm{v}_h\rrbracket_F\|_{0,F}^2\big)^{1/2}\|\bm{v}_h\|_{0,\Omega}\\
	\le&~ Ch^{1/2}\|\chi\|_{k+1,\infty,\Omega}\big(\sum_{F\in \mathcal{F}_h^\star}\||\bm{\beta}\cdot\bm{n}|^{1/2} \llbracket\bm{v}_h\rrbracket_F\|_{0,F}^2\big)^{1/2}\|\bm{v}_h\|_{0,\Omega}.
	\end{aligned}
\end{equation}
Finally, collecting \eqref{eq:approxJ1}-\eqref{eq:approxK3_1}, we then arrive at
$$
a_h^{rc}(\bm{v}_h,\bm{\xi})\le Ch^{1/2}\|\chi\|_{k+1,\infty,\Omega}\triplenorm{\bm{v}_h}_{rc}\|\bm{v}_h\|_{0,\Omega}\le C h^{1/2}\|\chi\|_{k+1,\infty,\Omega}\triplenorm{\bm{v}_h}_{rc}^2,
$$
namely, (\ref{eq:diffestimaterc}) with $\chi_2=C \|\chi\|_{k+1,\infty,\Omega}$.
\end{proof}

\begin{remark}[compared to the scalar case] We note that the standard $L^2$ projection is enough for the scalar case \cite{ayuso2009discontinuous}. For the vector case, however, we need the tailored projection $\bm{\Pi}_h$ \eqref{eq:projection} to deal with the facets on which the normal component of $\bm{\beta}$ is not dominated. 
\end{remark}

The next theorem provides the stability result for the variational form \eqref{eq:variational-form}.
\begin{theorem}[inf-sup condition] \label{thm:infsup}
	With the hypothesis of Lemma \ref{lm:weightinfsup}, there exist positive constants $\zeta$ and $h_0$ independent of $h$, such that, for $h<h_0$,
	\begin{equation}\label{eq:infsup}
		\sup_{\bm{v}_h\in \bm{V}_h^k}\frac{a_h(\bm{u}_h,\bm{v}_h)}{\triplenorm{\bm{v}_h}}\ge \zeta \triplenorm{\bm{u}_h}, \quad \forall \bm{u}_h\in \bm{V}_h^k.
	\end{equation}

\end{theorem}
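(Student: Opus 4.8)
The plan is to exhibit an explicit test function: given $\bm{u}_h \in \bm{V}_h^k$, take $\bm{v}_h = \bm{\Pi}_h(\bm{u}_h\varphi)$, which lies in $\bm{V}_h^k$ by construction of the projection. The point of departure is the algebraic splitting
\[
a_h(\bm{u}_h, \bm{\Pi}_h(\bm{u}_h\varphi)) = a_h(\bm{u}_h, \bm{u}_h\varphi) - a_h^d(\bm{u}_h, \bm{\xi}) - a_h^{rc}(\bm{u}_h, \bm{\xi}), \qquad \bm{\xi} := \bm{u}_h\varphi - \bm{\Pi}_h(\bm{u}_h\varphi).
\]
For the first term I would invoke the weighted coercivity \eqref{eq:weightinfsup1}, and for the last two the projection-difference bounds \eqref{eq:diffestimatediff} and \eqref{eq:diffestimaterc}. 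This produces a lower bound of the shape
\[
a_h(\bm{u}_h, \bm{\Pi}_h(\bm{u}_h\varphi)) \ge \tfrac{\chi_*+\kappa}{2}\triplenorm{\bm{u}_h}_d^2 + \tfrac{\chi_*\min\{C_{up},1\}}{2}\triplenorm{\bm{u}_h}_{rc}^2 - \chi_1\triplenorm{\bm{u}_h}_d\triplenorm{\bm{u}_h}_{rc} - \chi_2 h^{1/2}\triplenorm{\bm{u}_h}_{rc}^2.
\]

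The crux is to absorb the indefinite cross term. The essential observation is that, by Lemma \ref{lm:diffestimate}, the constants $\chi_1,\chi_2$ are \emph{independent of $\kappa$}, whereas the diffusive coercivity constant $(\chi_*+\kappa)/2$ grows linearly in $\kappa$. I would therefore apply Young's inequality in the form $\chi_1\triplenorm{\bm{u}_h}_d\triplenorm{\bm{u}_h}_{rc} \le \tfrac{\chi_*+\kappa}{4}\triplenorm{\bm{u}_h}_d^2 + \tfrac{\chi_1^2}{\chi_*+\kappa}\triplenorm{\bm{u}_h}_{rc}^2$, and then fix $\kappa$ large enough (compatibly with \eqref{eq:chicondition2}) that $\chi_1^2/(\chi_*+\kappa) \le \chi_*\min\{C_{up},1\}/8$. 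With $\kappa$ frozen, I would finally choose $h_0$ so small that $\chi_2 h^{1/2} \le \chi_*\min\{C_{up},1\}/8$ for all $h<h_0$, leaving
\[
a_h(\bm{u}_h, \bm{\Pi}_h(\bm{u}_h\varphi)) \ge \tfrac{\chi_*+\kappa}{4}\triplenorm{\bm{u}_h}_d^2 + \tfrac{\chi_*\min\{C_{up},1\}}{4}\triplenorm{\bm{u}_h}_{rc}^2 \ge c_0\triplenorm{\bm{u}_h}^2
\]
with $c_0>0$ independent of $h$ and $\varepsilon$.

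To close the inf-sup quotient I must bound the test function from above via $\triplenorm{\bm{\Pi}_h(\bm{u}_h\varphi)} \le \triplenorm{\bm{u}_h\varphi} + \triplenorm{\bm{\xi}}$. The first summand is controlled by the boundedness estimate \eqref{eq:weightinfsup2}. For $\triplenorm{\bm{\xi}}$ I would feed the three superconvergence bounds of Lemma \ref{lm:diffPivphi} into the energy norm \eqref{eq:energynorm}: \eqref{eq:phivapprox2} handles the curl contribution, \eqref{eq:phivapprox3} together with an $h_F^{-1}$ scaling handles the tangential-jump seminorm, and \eqref{eq:phivapprox1}, \eqref{eq:phivapprox3} handle the $\triplenorm{\cdot}_{rc}$ pieces, all ultimately bounded by $\|\bm{u}_h\|_{0,\Omega}$, which is dominated by $b_0^{-1/2}\triplenorm{\bm{u}_h}_{rc}$. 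This gives $\triplenorm{\bm{\Pi}_h(\bm{u}_h\varphi)} \le C_\varphi\triplenorm{\bm{u}_h}$, and dividing the two estimates yields \eqref{eq:infsup} with $\zeta = c_0/C_\varphi$.

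The main obstacle is not any individual estimate—the two hard lemmas are already in hand—but the correct ordering of the quantifiers: one must verify that $\chi_1,\chi_2$ genuinely do not degrade as $\kappa\to\infty$, fix $\kappa$ solely to dominate the diffusive cross term, and only then shrink $h_0$ to kill the $O(h^{1/2})$ reactive-convective remainder. A secondary point demanding care is that every constant entering $c_0$ and $C_\varphi$ stays independent of $\varepsilon$, which holds because $\varepsilon$ enters only as a bounded factor in $\triplenorm{\cdot}_d$ and is never inverted.
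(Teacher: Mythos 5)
Your proposal is correct and follows essentially the same route as the paper's proof: the same test function $\bm{v}_h=\bm{\Pi}_h(\bm{u}_h\varphi)$, the same splitting $a_h(\bm{u}_h,\bm{\Pi}_h(\bm{u}_h\varphi))=a_h(\bm{u}_h,\bm{u}_h\varphi)-a_h(\bm{u}_h,\bm{\xi})$ bounded via Lemmas \ref{lm:weightinfsup} and \ref{lm:diffestimate}, and the same choice of constants, since the paper's condition $\chi_*+\kappa\ge 8\chi_1^2/(\chi_*\min\{C_{up},1\})$ is exactly your Young-inequality absorption and its $h_0=\bigl(\chi_*\min\{C_{up},1\}/(8\chi_2)\bigr)^2$ is your smallness condition on $\chi_2 h^{1/2}$. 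The quantifier-ordering issue you highlight (fix $\kappa$ first, then $h_0$) is indeed the crux, and it is licensed by Lemma \ref{lm:diffestimate} holding with $\chi_1,\chi_2$ independent of $\kappa$, just as you argue.
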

\begin{proof}	
	For $\bm{u}_h\in \bm{V}_h^k$, let $\bm{v}_h := \bm{\Pi}_h(\bm{u}_h\varphi)\in\bm{V}_h^k$ with the tailored projection $\bm{\Pi}_h$ defined in \eqref{eq:projection}. We only need to prove that
	\begin{equation}
	a_h(\bm{u}_h,\bm{v}_h)\ge c_2\triplenorm{\bm{u}_h}^2, \quad 
		\triplenorm{\bm{v}_h}\le c_1\triplenorm{\bm{u}_h},
\end{equation}
for some positive constants $c_1$ and $c_2$. From \eqref{eq:weightinfsup1} and \eqref{eq:projection-difference}, we have
$$
\begin{aligned}
	a_h(\bm{u}_h,\bm{\Pi}_h(\bm{u}_h\varphi))
	=&~a_h(\bm{u}_h,\bm{\Pi}_h(\bm{u}_h\varphi)-\bm{u}_h\varphi)+a_h(\bm{u}_h,\bm{u}_h\varphi)\\
	\ge&~ -\chi_1\triplenorm{\bm{u}_h}_{d}\triplenorm{\bm{u}_h}_{rc}-\chi_2h^{1/2}\triplenorm{\bm{u}_h}^2_{rc}+\frac{\chi_*+\kappa}{2} \triplenorm{\bm{u}_h}_{d}^2\\
	&~\quad+\frac{\chi_*\min\{C_{up},1\}}{2}\triplenorm{\bm{u}_h}^2_{rc}.
\end{aligned}
$$
If we choose $\kappa$  and $h_0$ that satisfy $\chi_*+\kappa\ge \frac{8\chi_1^2}{\chi_*\min\{C_{up},1\}}$ and $h<h_0 :=(\frac{\chi_*\min\{C_{up},1\}}{8\chi_2})^2$ respectively, we obtain
$$
a_h(\bm{u}_h,\bm{\Pi}_h(\bm{u}_h\varphi))\ge \frac{\chi_*\min\{C_{up},1\}}{4}\triplenorm{\bm{u}_h}^2.
$$
From \eqref{eq:energynorm}, Lemma \ref{lm:diffPivphi} and \eqref{eq:weightinfsup2},  we easily deduce
$$
\triplenorm{\bm{\Pi}_h(\bm{u}_h\varphi)}\le C\triplenorm{\bm{u}_h\varphi} \leq c_1 \triplenorm{\bm{u}_h}.
$$
Thus \eqref{eq:infsup} follows.
\end{proof}

\section{A priori error estimate} \label{sc:error}
In this section, we show a priori error estimate in the norm (\ref{eq:energynorm}). Invoking the projection $\bm{\Pi}_h$ defined in (\ref{eq:projection}) and the approximation property in Lemma \ref{lm:projectionapprox}, we have the following property:
\begin{equation}\label{eq:projectinapproxT}
	\|\bm{u}-\bm{\Pi}_h\bm{u}_h\|_{r,T}\le Ch^{k+1-r}|\bm{u}|_{k+1,T},\quad r=0,1,2,\quad T\in\mathcal{T}_h.
\end{equation}
Further, using trace inequality, we deduce that
\begin{equation}\label{eq:projectionapproxF}
		\|\bm{u}-\bm{\Pi}_h\bm{u}_h\|_{0,F}\le Ch_T^{k+1/2}|\bm{u}|_{k+1,T},\quad \forall F\in\mathcal{F}_h \text{ and }F\subset \partial T.
\end{equation}
Then we are able to give the following error estimate.
\begin{theorem}[error estimate] \label{tm:estimate}
	Let $\bm{u}\in\bm{H}^{k+1}(\Omega)$ be the solution of (\ref{eq:Hcurl-cd}), and let $\bm{u}_h$ be the solution of the discrete problem (\ref{eq:variational-form}). There exists a constant $C>0$ depending on the domain $\Omega$, the shape-regularity constant, and $\bm{\beta},\gamma,\theta,\alpha,\alpha_d$ such that
	\begin{equation}\label{eq:err}
		\triplenorm{\bm{u}-\bm{u}_h}\le Ch^k(\varepsilon^{1/2}+h^{1/2})|\bm{u}|_{k+1,\Omega}.
	\end{equation}
\end{theorem}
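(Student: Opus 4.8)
The plan is to run a standard Strang/Céa-type argument built on the discrete inf-sup stability of Theorem~\ref{thm:infsup}, the Galerkin orthogonality~\eqref{eq:consistency}, and the approximation estimates~\eqref{eq:projectinapproxT}--\eqref{eq:projectionapproxF}, the one genuinely delicate point being the extraction of the half power $h^{1/2}$ in the convective terms.

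First I would split the error through the tailored projection, writing $\bm{u}-\bm{u}_h = \bm{\eta}+\bm{e}_h$ with $\bm{\eta}:=\bm{u}-\bm{\Pi}_h\bm{u}$ and $\bm{e}_h:=\bm{\Pi}_h\bm{u}-\bm{u}_h\in\bm{V}_h^k$. Estimating $\triplenorm{\bm{\eta}}$ term by term, the volume and curl contributions are controlled by~\eqref{eq:projectinapproxT} and the facet-jump contributions by~\eqref{eq:projectionapproxF}; this gives $\triplenorm{\bm{\eta}}_d\le C\varepsilon^{1/2}h^k|\bm{u}|_{k+1,\Omega}$ and $\triplenorm{\bm{\eta}}_{rc}\le Ch^{k+1/2}|\bm{u}|_{k+1,\Omega}$, hence $\triplenorm{\bm{\eta}}\le Ch^k(\varepsilon^{1/2}+h^{1/2})|\bm{u}|_{k+1,\Omega}$. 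For $\bm{e}_h$, I would apply the inf-sup condition together with consistency to write $\zeta\triplenorm{\bm{e}_h}\le\sup_{\bm{v}_h}a_h(\bm{e}_h,\bm{v}_h)/\triplenorm{\bm{v}_h}=\sup_{\bm{v}_h}\big(-a_h(\bm{\eta},\bm{v}_h)\big)/\triplenorm{\bm{v}_h}$, so the whole problem reduces to the continuity bound $|a_h(\bm{\eta},\bm{v}_h)|\le Ch^k(\varepsilon^{1/2}+h^{1/2})|\bm{u}|_{k+1,\Omega}\triplenorm{\bm{v}_h}$.

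Splitting $a_h=a_h^d+a_h^{rc}$, the diffusive part is routine: its four terms are bounded exactly as in~\eqref{eq:traceinverse} and Step~1 of Lemma~\ref{lm:diffestimate}, by Cauchy--Schwarz together with the trace inequality applied to $\bm{\eta}$ and the inverse/trace inequalities applied to the discrete $\bm{v}_h$, giving $a_h^d(\bm{\eta},\bm{v}_h)\le C\varepsilon^{1/2}h^k|\bm{u}|_{k+1,\Omega}\triplenorm{\bm{v}_h}_d$.

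The main obstacle will be the reaction--convection part, because a direct estimate of the volume term $(L_{\bm{\beta},h}\bm{\eta},\bm{v}_h)$ places a derivative on $\bm{\eta}$ and yields only $O(h^k)$, which would destroy the advection-dominated gain. To recover $h^{k+1/2}$ I would first use~\eqref{eq:Lie-1} to split off the genuinely zeroth-order contribution $\big([-(\nabla\cdot\bm{\beta})I+\nabla\bm{\beta}+(\nabla\bm{\beta})^T]\bm{\eta},\bm{v}_h\big)$, which is $O(h^{k+1})$ by~\eqref{eq:projectinapproxT}, and then integrate the remaining term by parts with~\eqref{eq:Lie-2} to transfer the operator onto the discrete $\bm{v}_h$, producing a volume term of the form $(L_{\bm{\beta},h}\bm{v}_h,\bm{\eta})$ plus element-boundary integrals. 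The key point is that, with the operator now on $\bm{v}_h$, replacing $\bm{\beta}$ by its elementwise-constant projection $\bm{P}_h^0\bm{\beta}$ makes $L_{\bm{P}_h^0\bm{\beta}}\bm{v}_h\in\bm{\mathcal{P}}_{k-1}(T)$, which is annihilated against $\bm{\eta}$ by the orthogonality~\eqref{eq:projectionT}, exactly as in $J_1+J_2$ of Lemma~\ref{lm:diffestimate}; the residual, estimated with the inverse inequality on $\bm{v}_h$ and $\|\bm{\beta}-\bm{P}_h^0\bm{\beta}\|_{0,\infty}\le Ch|\bm{\beta}|_{1,\infty}$, is again $O(h^{k+1})$. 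The element-boundary integrals produced here are merged with the facet terms already present in $a_h^{rc}$ and regrouped using~\eqref{eq:beta-id} and~\eqref{eq:alpha-id1}--\eqref{eq:alpha-id2}; estimated with the trace bound~\eqref{eq:projectionapproxF}, which supplies the factor $h^{k+1/2}$, they reduce to the norm pieces of $\triplenorm{\bm{v}_h}_{rc}$, with the single exception of the term pairing $\bm{\beta}\cdot\llbracket\bm{v}_h\rrbracket_F$ with $[\bm{\eta}]_{\rm n}$. This last term is handled precisely as $K_3$ in Lemma~\ref{lm:diffestimate}: on $\mathcal{F}_h^\star$ the normal-domination Assumption~\ref{as:normal-beta} turns $\bm{\beta}\cdot\llbracket\bm{v}_h\rrbracket_F$ into the controllable $|\bm{\beta}\cdot\bm{n}|^{1/2}\llbracket\bm{v}_h\rrbracket_F$, while on $\mathcal{F}_h^0\setminus\mathcal{F}_h^\star$ the moment condition~\eqref{eq:projectionF} lets me subtract $\bm{P}_{\mathcal{F},h}^0\bm{\beta}$ from $\bm{\beta}$ and gain an extra factor $h$. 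Collecting everything yields $a_h^{rc}(\bm{\eta},\bm{v}_h)\le Ch^{k+1/2}|\bm{u}|_{k+1,\Omega}\triplenorm{\bm{v}_h}_{rc}$; feeding the combined bound back through the inf-sup inequality gives $\triplenorm{\bm{e}_h}\le Ch^k(\varepsilon^{1/2}+h^{1/2})|\bm{u}|_{k+1,\Omega}$, and the triangle inequality $\triplenorm{\bm{u}-\bm{u}_h}\le\triplenorm{\bm{\eta}}+\triplenorm{\bm{e}_h}$ completes the proof.
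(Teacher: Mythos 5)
Your proposal is correct and follows essentially the same route as the paper's proof: the same splitting $\bm{u}-\bm{u}_h=(\bm{u}-\bm{\Pi}_h\bm{u})+(\bm{\Pi}_h\bm{u}-\bm{u}_h)$, inf-sup plus consistency reducing everything to bounding $a_h(\bm{u}-\bm{\Pi}_h\bm{u},\bm{v}_h)$, integration by parts via \eqref{eq:Lie-2} to put the advection on $\bm{v}_h$, the orthogonality \eqref{eq:projectionT} against $\bm{P}_h^0\bm{\beta}$, the facet regrouping via \eqref{eq:beta-id} and \eqref{eq:alpha-id1}--\eqref{eq:alpha-id2}, and the $\mathcal{F}_h^\star$ versus $\mathcal{F}_h^0\setminus\mathcal{F}_h^\star$ split with \eqref{eq:projectionF} for the hard term $\tilde{K}_3$. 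Your only deviation---applying \eqref{eq:Lie-1} first to peel off the zeroth-order part before transposing with \eqref{eq:Lie-2}, rather than transposing first and then expanding $\mathcal{L}_{\bm{\beta},h}\bm{v}_h$ as the paper does---is an equivalent reorganization of the same computation.
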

\begin{proof}
We define $\bm{e}_h:=\bm{u}-\bm{\Pi}_h\bm{u},\quad \bm{\varepsilon}_h:=\bm{u}_h-\bm{\Pi}_h\bm{u}$. Thanks to Theorem \ref{thm:infsup} (inf-sup condition) and consistency (\ref{eq:consistency}), we have
\begin{equation}\label{eq:deltaerr}
  \zeta\triplenorm{\bm{\varepsilon}_h} \le \sup_{\bm{v}_h \in \bm{V}_h^k}  \frac{a_h(\bm{\varepsilon}_h,\bm{v}_h)}{\triplenorm{\bm{v}_h}}
   = \sup_{\bm{v}_h \in \bm{V}_h^k} \frac{a_h(\bm{e}_h,\bm{v}_h)}{\triplenorm{\bm{v}_h}}.
\end{equation}
The estimate of $a_h(\bm{e}_h, \bm{v}_h)$ is divided into the following steps. 

\noindent{\it Step 1: diffusive part}. The estimate of the diffusive part is standard and similar to \eqref{eq:diffencediffusion} by using local trace inequality, 
inverse inequality and \eqref{eq:projectinapproxT}--\eqref{eq:projectionapproxF}:
\begin{equation}\label{eq:differr}
a^{d}_h(\bm{e}_h,\bm{v}_h) \le C\varepsilon^{1/2}h^k\triplenorm{\bm{v}_h}_{d}|\bm{u}|_{k+1,\Omega}.
\end{equation}

\noindent{\it Step 2: reactive-convective part}. Invoking \eqref{eq:Lie-2}, we obtain
$$
\begin{aligned}
(L_{\bm{\beta},h}\bm{e}_h,\bm{v}_h) &= (\bm{e}_h,\mathcal{L}_{\bm{\beta},h} \bm{v}_h)+\langle\bm{\beta},\llbracket\bm{e}_h\cdot\bm{v}_h\rrbracket\rangle_{\mathcal{F}_h}\\
	&=((\nabla\bm{\beta})^T\bm{v}_h-(\nabla\cdot\bm{\beta})\bm{v_h},\bm{e}_h)-((\nabla_h \bm{v}_h)^T\bm{\beta},\bm{e}_h)+\langle\bm{\beta},\llbracket\bm{e}_h\cdot\bm{v}_h\rrbracket\rangle_{\mathcal{F}_h}.
\end{aligned}
$$
Since $(\nabla_h \bm{v}_h)^T\bm{P}_h^0\bm{\beta}\in \bm{V}_h^{k-1}$, the definition of projection $\bm{\Pi}_h$ \eqref{eq:projectionT} leads to 
$$
((\nabla_h\bm{v}_h)^T\bm{P}_h^0\bm{\beta},\bm{e}_h)_T =0 \quad \forall T \in \mathcal{T}_h.
$$
Hence, we can rewrite the convective-reactive part as
\begin{equation*}\label{eq:rcerr}
	\begin{aligned}
		& \quad a^{rc}_h(\bm{e}_h,\bm{v}_h)\\
		&= ((\nabla\bm{\beta})^T\bm{v}_h-(\nabla\cdot\bm{\beta})\bm{v_h}+\gamma\bm{v}_h,\bm{e}_h)-((\nabla_h\bm{v}_h)^T(\bm{\beta}-\bm{P}_h^0\bm{\beta}),\bm{e}_h)\\
		&\quad+\langle \llbracket \bm{\beta}\times \bm{e}_h\rrbracket_{\rm t},\vavg{\bm{v}_h}\rangle_{\mathcal{F}_h^0}-\langle \llbracket \bm{\beta}\times \bm{e}_h\rrbracket_{\rm t},\llbracket{\bm{v}_h\rrbracket_{\rm t}}\times\frac{\llbracket\alpha_d\rrbracket}{2}\rangle_{\mathcal{F}_h^0}\\
		&\quad -\langle\left[\bm{e}_h\right]_{\rm{n}},
		\{\bm{\beta}\cdot\bm{v}_h\}\rangle_{\mathcal{F}_h^0} +\langle\left[\bm{e}_h\right]_{\rm{n}},
		\llbracket\bm{\beta}\cdot\bm{v}_h\rrbracket\cdot\frac{\llbracket\alpha\rrbracket}{2}\rangle_{\mathcal{F}_h^0} \\
		&\quad +\langle\llbracket \bm{e}_h\rrbracket_{\rm t},\llbracket{\bm{\beta}\times \bm{v}_h}\rrbracket_{\rm t}\times\frac{\llbracket\alpha\rrbracket}{2}\rangle_{\mathcal{F}_h^0}-\langle\llbracket \bm{e}_h\rrbracket_{\rm t},\llbracket{\bm{\beta}\times \bm{v}_h}\rrbracket_{\rm t}\times\frac{\llbracket\alpha_d\rrbracket}{2}\rangle_{\mathcal{F}_h^0}\\
		&\quad+\langle \tau_F|\llbracket\alpha-\alpha_d\rrbracket|[\bm{e}_h]_{\rm{n}},[\bm{v}_h]_{\rm{n}}\rangle_{\mathcal{F}_h^0} \\
		&\quad-\langle \bm{\beta}\cdot\bm{n},\bm{e}_h\cdot\bm{v}_h\rangle_{\mathcal{F}_{h,D}^{\partial,-} \cup \mathcal{F}_{h,N}^{\partial,-} } 
		+\langle\bm{\beta},\llbracket\bm{e}_h\cdot\bm{v}_h\rrbracket\rangle_{\mathcal{F}_h} :=  \sum_{i=1}^{11} \tilde{J}_i.
	\end{aligned}
\end{equation*}
The estimates of $\tilde{J}_i~(i=1,\cdots,11)$ are given as follows. 

\noindent{\it Step 2-1: terms without weight}. Using the approximation properties of $\bm{\Pi}_h$ \eqref{eq:projectinapproxT} and \eqref{eq:projectionapproxF}, the inverse inequality, we obtain
\begin{align}
	\tilde{J}_1 &\le Ch^{k+1}\|\bm{v}_h\|_{0,\Omega}|\bm{u}|_{k+1,\Omega}, \label{eq:errJ1} \\
	\tilde{J}_2 &\le Ch^{k+2}|\bm{\beta}|_{1,\infty}|\bm{v}_h|_{1,\Omega}|\bm{u}|_{k+1,\Omega}\le Ch^{k+1}\|\bm{v}_h\|_{0,\Omega}|\bm{u}|_{k+1,\Omega}. \label{eq:errJ2}
\end{align}
Using \eqref{eq:beta-id} and \eqref{eq:projectionapproxF}, we deduce
\begin{equation}\label{eq:errJ351011}
\begin{aligned}
	& \quad \tilde{J}_3+\tilde{J}_5+\tilde{J}_{10}+\tilde{J}_{11}\\
	&=-\langle \bm{\beta}\cdot \bm{n}^+\llbracket\bm{e}_h\rrbracket_{F},\vavg{\bm{v}_h}\rangle_{\mathcal{F}_h^0}+\langle\bm{\beta},\llbracket\bm{e}_h\cdot\bm{v}_h\rrbracket\rangle_{\mathcal{F}_h^0}+\langle \bm{\beta}\cdot\bm{n},\bm{v}_h\cdot\bm{e}_h\rangle_{\mathcal{F}_{h,D}^{\partial,+} \cup \mathcal{F}_{h,N}^{\partial,+} }\\
		&= \langle \bm{\beta}\cdot\bm{n}^+\llbracket\bm{v}_h\rrbracket_{F},\vavg{\bm{e}_h}\rangle_{\mathcal{F}_h^0}+\langle \bm{\beta}\cdot\bm{n},\bm{v}_h\cdot\bm{e}_h\rangle_{\mathcal{F}_{h,D}^{\partial,+} \cup \mathcal{F}_{h,N}^{\partial,+} }\\
		&\le Ch^{k+1/2}\big(\sum_{F\in \mathcal{F}_h}\||\bm{\beta}\cdot\bm{n}|^{1/2}\llbracket\bm{v}_h\rrbracket_{F}\|_{0,F}^2\big)^{1/2}|\bm{u}|_{k+1,\Omega}.
	\end{aligned}
\end{equation}

\noindent{\it Step 2-2: weighed terms}. Further, \eqref{eq:alpha-id1} and \eqref{eq:alpha-id2} lead to  
\begin{equation}\label{eq:errJ4678}
\begin{aligned}
&~\tilde{J}_4+\tilde{J}_6+\tilde{J}_7+\tilde{J}_8\\
		=&~\frac{1}{2}\langle\bm{\beta}\cdot\llbracket\alpha\rrbracket\llbracket\bm{e}_h\rrbracket_{F},\llbracket\bm{v}_h\rrbracket_{F}\rangle_{\mathcal{F}_h^0}-\langle\frac{[\alpha-\alpha_d]_F}{2}\bm{\beta}\cdot \llbracket\bm{e}_h\rrbracket_{F} ,[\bm{v}_h]_{\rm n}\rangle_{\mathcal{F}_h^0}\\
		&~+\langle\frac{[\alpha-\alpha_d]_F}{2}\bm{\beta}\cdot \llbracket\bm{v}_h\rrbracket_{F} ,[\bm{e}_h]_{\rm n}\rangle_{\mathcal{F}_h^0} =:~\tilde{K}_1+\tilde{K}_2+\tilde{K}_3.
	\end{aligned}
\end{equation}
Similar to \eqref{eq:approxK1} and \eqref{eq:phivapprox3}, we have 
\begin{align}
	\tilde{K}_1 & \le Ch^{k+1/2}\big(\sum_{F\in \mathcal{F}_h}\||\bm{\beta}\cdot\bm{n}|^{1/2}\llbracket\bm{v}_h\rrbracket_{F}\|_{0,F}^2\big)^{1/2}|\bm{u}|_{k+1,\Omega}, \label{eq:errK1}\\
	\tilde{K}_2+\tilde{J}_9 & \le Ch^{k+1/2}\|\bm{v}_h\|_{\rm n}|\bm{u}|_{k+1,\Omega}. \label{eq:errK2}
\end{align}
The estimate of $\tilde{K}_3$ can also be divided into $\mathcal{F}_h^\star$ and $\mathcal{F}_h^0 \setminus \mathcal{F}_h^\star$, i.e.
\begin{subequations} \label{eq:errK3}
\begin{equation} \label{eq:errK3-1}
\begin{aligned}
& ~  \langle\frac{[\alpha-\alpha_d]_F}{2}\bm{\beta}\cdot \llbracket\bm{v}_h\rrbracket_{F}, [\bm{e}_h]_{\rm n}\rangle_{\mathcal{F}_h^0/\mathcal{F}_h^\star}\\
   =&~\langle\frac{[\alpha-\alpha_d]_F}{2}(\bm{\beta}- \bm{P}_{\mathcal{F},h}^0\bm{\beta})\cdot \llbracket\bm{v}_h\rrbracket_{F}, [\bm{e}_h]_{\rm n}\rangle_{\mathcal{F}_h^0/\mathcal{F}_h^\star}
   \le Ch^{k+1}\|v_h\|_{0,\Omega}|\bm{u}|_{k+1,\Omega},
\end{aligned}
\end{equation}
and 
\begin{equation}\label{eq:errK3-2}
\begin{aligned}
& ~\langle\frac{[\alpha-\alpha_d]_F}{2}\bm{\beta}\cdot \llbracket\bm{v}_h\rrbracket_{F} ,[\bm{e}_h]_{\rm n}\rangle_{\mathcal{F}_h^\star} \\
\le& ~Ch^{k+1/2}\big(\sum_{F\in \mathcal{F}_h^\star}\||\bm{\beta}\cdot\bm{n}|^{1/2} \llbracket\bm{v}_h\rrbracket_F\|_{0,F}^2\big)^{1/2}|\bm{u}|_{k+1,\Omega}.
\end{aligned}
\end{equation}
\end{subequations}
Collecting \eqref{eq:errJ1}-\eqref{eq:errK3-2}, we then arrive at
$$
a_h^{rc}(\bm{e}_h,\bm{v}_h) \le Ch^{k+1/2}\triplenorm{\bm{v}_h}_{rc}|\bm{u}|_{k+1,\Omega}.
$$

In summary, by combing the estimates of diffusive and reactive-convective parts, we obtain
$$
a_h(\bm{e}_h,\bm{v}_h)\le Ch^k(\varepsilon^{1/2}+h^{1/2})\triplenorm{\bm{v}_h}|\bm{u}|_{k+1,\Omega},
$$
which leads to the desired estimate \eqref{eq:err} by using \eqref{eq:deltaerr}, approximation properties of $\bm{\Pi}_h$ \eqref{eq:projectinapproxT}-\eqref{eq:projectionapproxF}, and the triangle inequality. 
\end{proof}

\section{Numerical Experiments} \label{sc:numerical}
In this section, we present several tests for both two-dimensional and three-dimensional problems to verify our theoretical results as well as to display the performance of the proposed DG method in the presence of interior or boundary layers. 
We set the computational domain to be $(0,1)^d$ for $d=2,3$, where the uniform meshes with different mesh sizes are applied. In the case of translational symmetry, known as the transversal electric setting in computational electromagnetics, \eqref{eq:Hcurl-cd} can be reduced to the following two-dimensional (2D) boundary value problem on a domain $\Omega\subset\mathbb{R}^2$ \cite{heumann2013stabilized}
\begin{equation} \label{eq:Hcurl-cd-2d}
	\left\{
	\begin{aligned}
		{\mathbf{R}}\nabla  (\varepsilon \nabla \cdot  ({\rm \mathbf{R}}\bm{u}) ) - {\mathbf{R}}\bm{\beta} 
		\nabla \cdot ({\rm \mathbf{R}} \bm{u}) + \nabla (\bm{\beta} \cdot \bm{u}) +  \gamma
		\bm{u}&= \bm{f} ~~\quad \text{in }\Omega, \\
		({\mathbf{R}}\bm{n} \cdot\bm{u}){\mathbf{R}}\bm{n} + \chi_{\Gamma_D^-} (\bm{u} \cdot \bm{n}) \bm{n} &= \bm{g}_D \quad \text{on } \Gamma_D, \\
		\varepsilon \nabla \cdot({\mathbf{R}} \bm{u}) {\mathbf{R}}\bm{n}+ \chi_{\Gamma_N^-} (\bm{\beta} \cdot \bm{n}) \bm{u} &= \bm{g}_N \quad \text{on } \Gamma_N,
	\end{aligned}
	\right.
\end{equation}
with the $\frac{\pi}{2}$-rotation matrix $\mathbf{R}=\left(\begin{matrix}0 & 1 \\ -1 & 0\end{matrix}\right)$. 

In the 2D case, the flux $\sigma(\bm{u}) := \varepsilon \nabla \cdot (\mathbf{R}\bm{u}) - \bm{\beta} \cdot (\mathbf{R}\bm{u})$ becomes a scalar. Hence, the tangential jump conditions in the strong form \eqref{eq:curl-scheme-B} will be replaced by 
$$
[\bm{u}]_{\rm t} = 0, \quad \llbracket \sigma(\bm{u}) \rrbracket_{\rm t} = \bm{0} \quad \text{on each } F \in \mathcal{F}_h^0, 
$$
where 
$$
[\bm{u}]_{\rm t} := \mathbf{R}\bm{n}^+ \cdot \bm{u}^+ + \mathbf{R}\bm{n}^- \cdot \bm{u}^-, \quad 
\llbracket w \rrbracket_{\rm t} := \mathbf{R}\bm{n}^+w^+ + \mathbf{R}\bm{n}^-w^-.
$$ 
The general choice of $\mathcal{B}$'s operators is the same as \eqref{eq:general-Bs} after re-defining some operators, and the correspondences are summarized below.
$$
\begin{aligned}
\mathcal{B}_1 \bm{v} \text{ on } \mathcal{F}_h^0&: \llbracket \bm{v} \rrbracket_{\rm t} \mapsto [\bm{v}]_{\rm t}, 
\quad  \nabla_h \times \bm{v} \mapsto \nabla_h \cdot (\mathbf{R}\bm{v}), 
\quad \bm{\beta} \times \bm{v} \mapsto \mathbf{R}\bm{\beta} \cdot \bm{v},\\
\mathcal{B}_1^D \bm{v} \text{ on } \mathcal{F}_{h,D}^\partial &: \bm{n} \times \bm{v} \mapsto (\mathbf{R}\bm{n} \cdot \bm{v}) \mathbf{R}\bm{n}, 
\quad [\bm{n} \times (\nabla_h \times \bm{v})] \times \bm{n} \mapsto \nabla_h \cdot (\mathbf{R}\bm{v}) \mathbf{R}\bm{n}.
\end{aligned}
$$
In the same fashion, we can deduce the variational forms, for which the theory discussed in Sections \ref{sc:stability}-\ref{sc:error} still works.

\subsection{Experiment I: 3D smooth solution} We take $\bm{\beta}=(1,2,3)^T$, $\gamma=0$ so that the Assumption \ref{as:Friedrichs} (degenerate Friedrichs system) is valid with $\rho(x) = 0$.  For various diffusion coefficient $\varepsilon$ as $1,10^{-3},10^{-9}$, the forcing term $\bm{f}$ is chosen so that the analytical solution of (\ref{eq:Hcurl-cd}) is given by $\bm{u}(x,y,z)=(\sin(y),\sin(z),\sin(x))^T$. The boundary data is determined 
with the following boundary setting
$$
\Gamma_D:\{x=0 \text{ or } x=1 \text{ or } y=0 \text{ or } y=1\}, \quad
\Gamma_N:\{z=0 \text{ or } z=1\}.
$$
In the DG variational formulation \eqref{eq:variational-form}, we set $\eta=\tau=100$, $\theta=1$, $\alpha_d^\pm=\frac{1}{2}$, $\alpha^\pm = \frac{1+\text{sgn}(\bm{\beta}\cdot \bm{n}^\pm)}{2}$.

Figure \ref{fig:3dorder} represents, on a log-log scale, the convergence diagrams in the DG energy norm $\triplenorm{\cdot}$ and $L^2$ norm for $k=1$ versus the mesh size $h$. In terms of the DG energy norm \eqref{eq:energynorm}, we observe the first-order accuracy when diffusion dominates and order $3/2$ when convection dominates, which is in agreement with Theorem \ref{tm:estimate}. 
We also observe that, possibly due to the smoothness of the solution, the second-order convergence of $L^2$ norm is obtained in all regimes. 
\begin{figure}[!htbp]
	\centering
	\includegraphics[width=.3\textwidth]{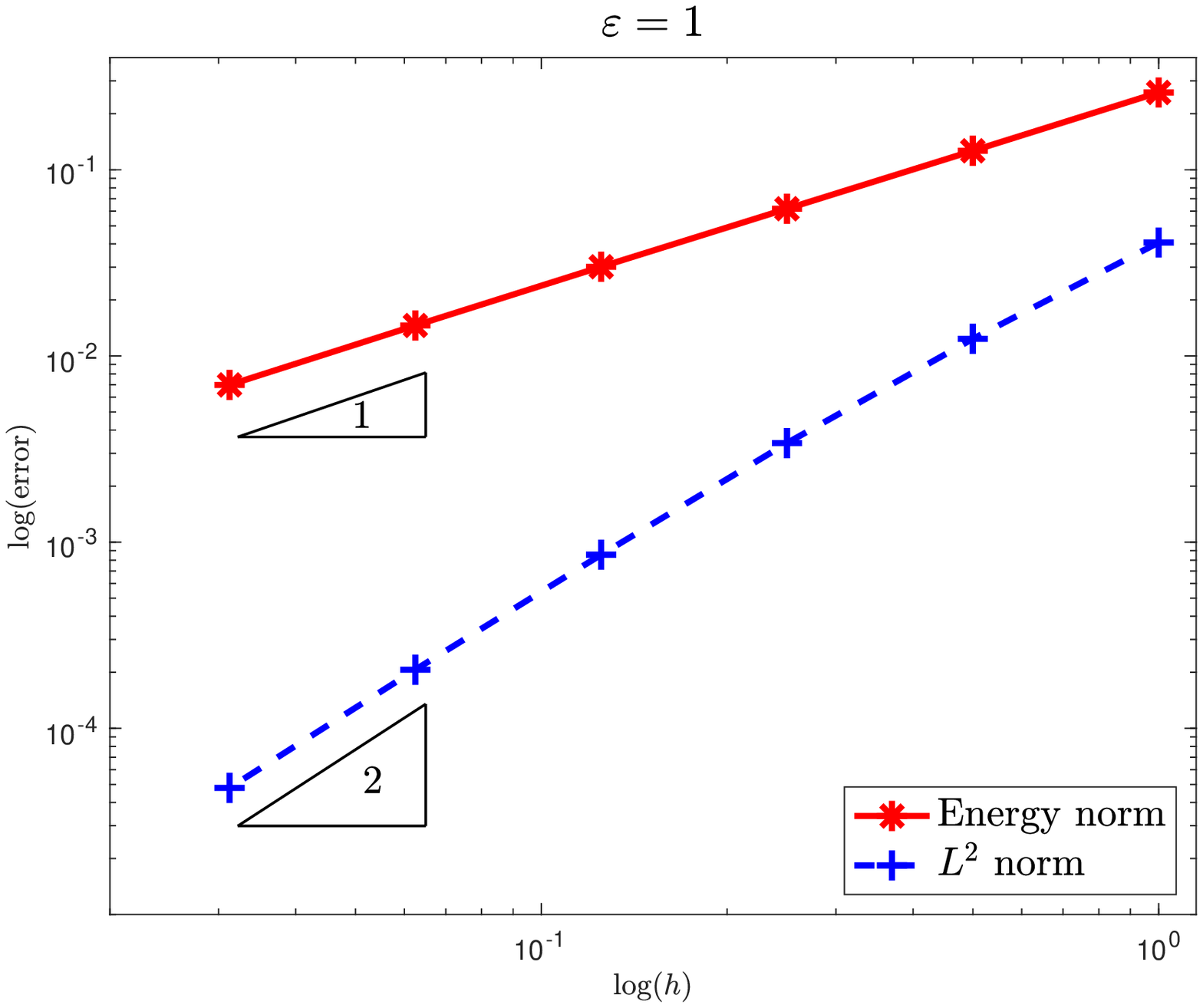}
	\includegraphics[width=.3\textwidth]{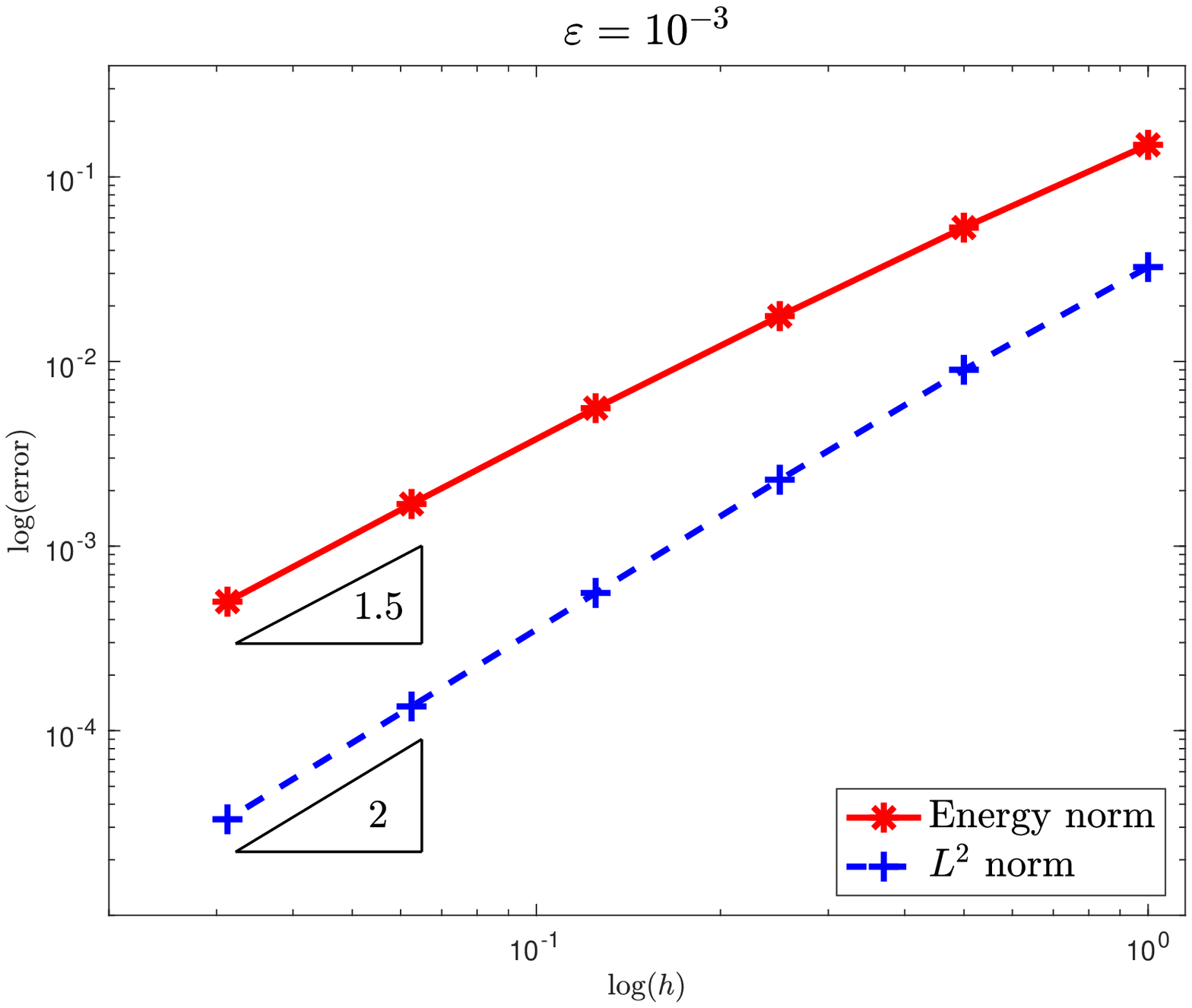}
	\includegraphics[width=.3\textwidth]{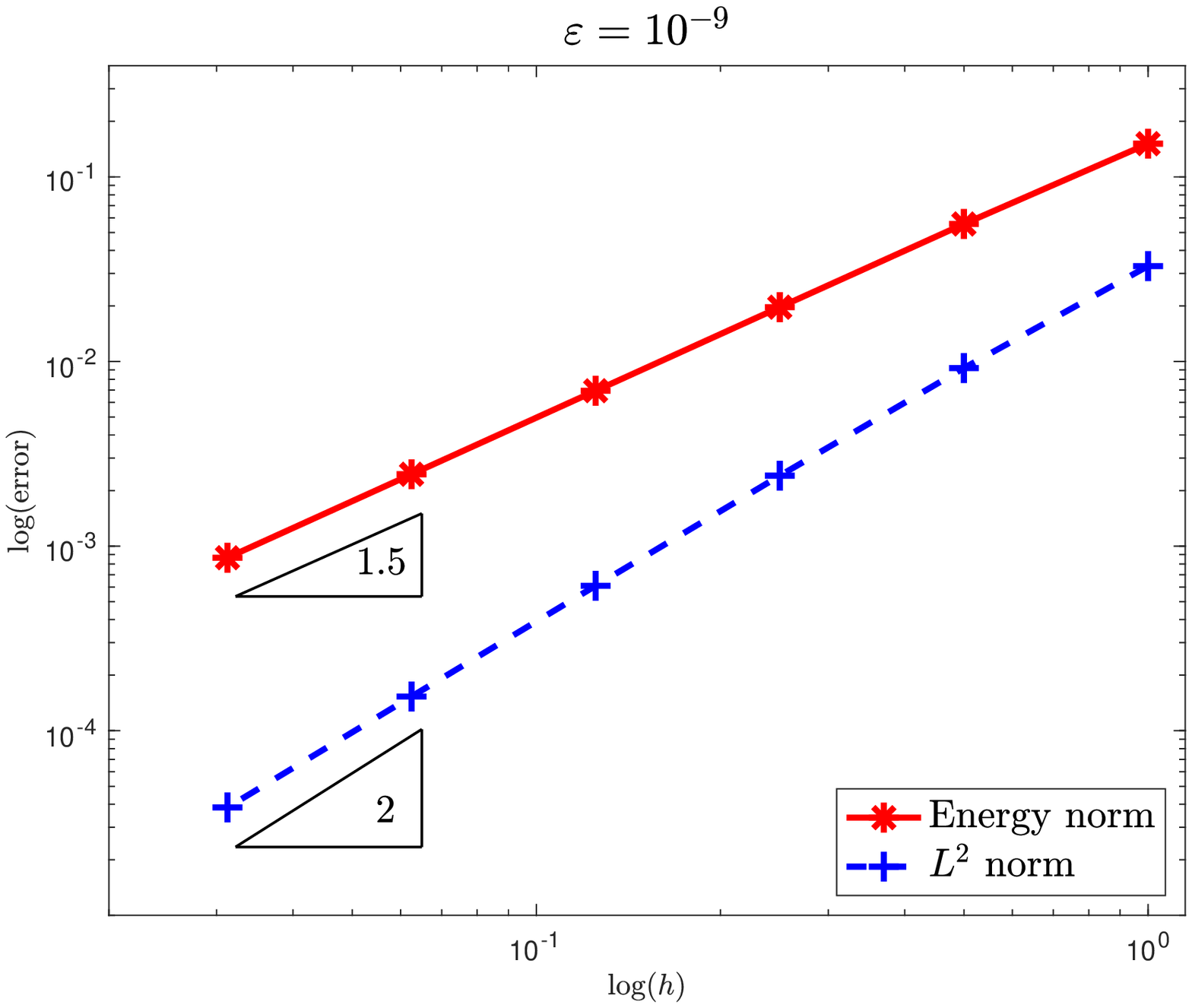}
	\caption{Experiment I. Convergence diagrams in the DG energy norm and $L^2$ norm for the 3D problem with a smooth solution.}\label{fig:3dorder}
\end{figure}

\subsection{Experiment II: 2D smooth solution} To verify the feasibility of the 2D problem \eqref{eq:Hcurl-cd-2d}, we take $\bm{\beta}=(1,1)^T$, $\gamma=0$, and vary the diffusion coefficient $\varepsilon$ as $1,10^{-3},10^{-9}$. The forcing term $\bm{f}$ and boundary data are chosen so that the analytical solution is given by
$\bm{u}(x,y)=(\sin(y),\sin(x))^T$,
with the following boundary setting
$$
\Gamma_D:\{y=0 \text{ or } y=1\},\quad
\Gamma_N:\{x=0 \text{ or } x=1\}.
$$
We set $\eta=\tau=10$, $\theta=1$, $\alpha_d^\pm=\frac{1}{2}$ in the DG scheme. For $\alpha$, we use a slightly different strategy such that  $\alpha^\pm = \frac{1+\text{sgn}(\bm{\beta}\cdot \bm{n}^\pm)}{2}$ for $\varepsilon\le10^{-3}$ and $\alpha^\pm = \frac{1+0.1\text{sgn}(\bm{\beta}\cdot \bm{n}^\pm)}{2}$ for $\varepsilon=1$ to obtain a better convergence result.

Figure \ref{fig:2dorder} shows  the convergence diagrams on a log-log scale in the DG energy norm $\triplenorm{\cdot}$ and $L^2$ norm for $k=1,2,$ versus the mesh size $h$. Similar to the 3D case, $k$-th order accuracy in the DG energy norm is observed when diffusion dominates, and order $k+1/2$ is observed when convection dominates. In all the tests above, the convergence orders in the $L^2$ norm seem to be $k+1$, which is out of the scope of current work. 

\begin{figure}[!htbp]
	\centering
	\includegraphics[width=.3\textwidth]{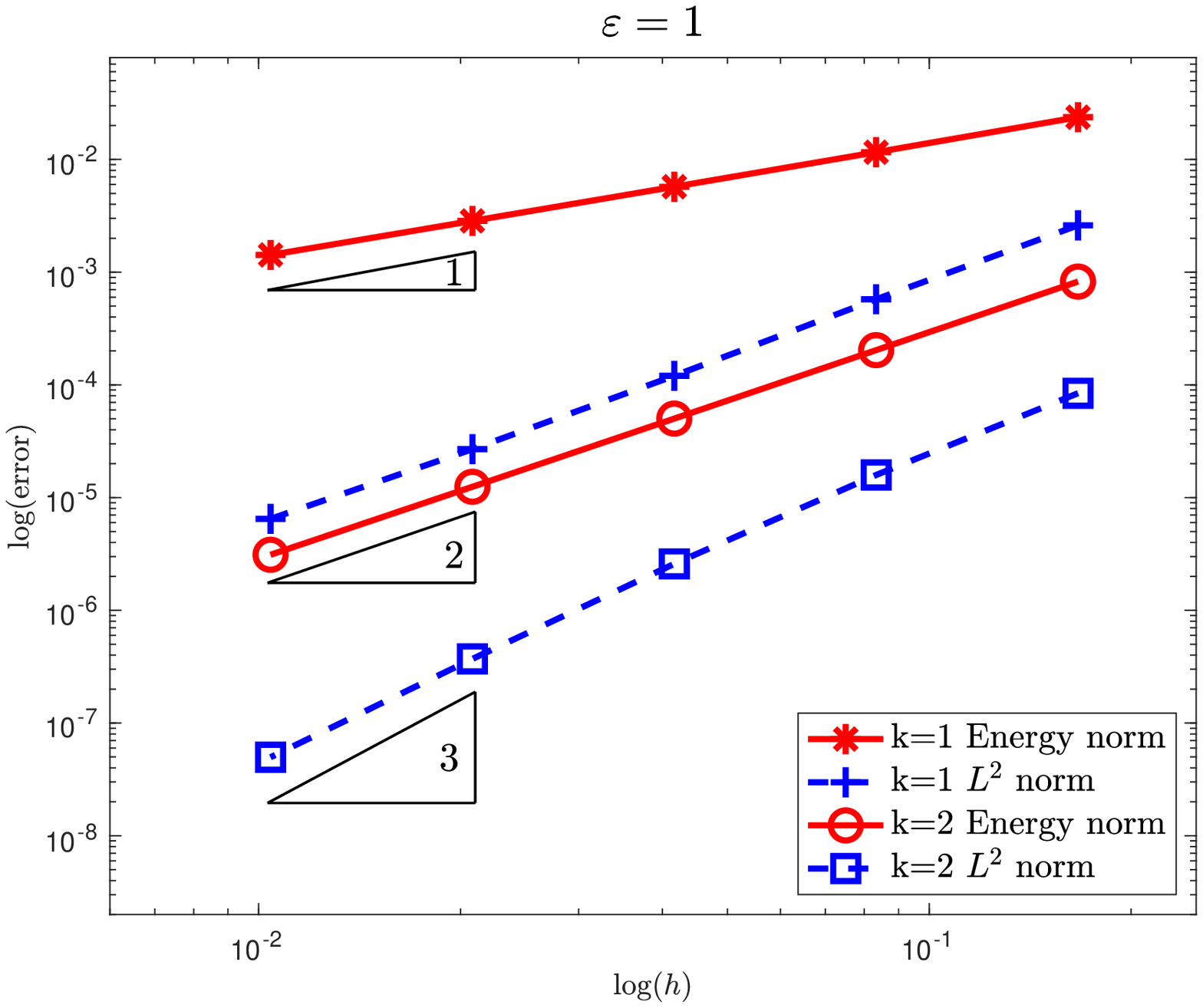}
	\includegraphics[width=.3\textwidth]{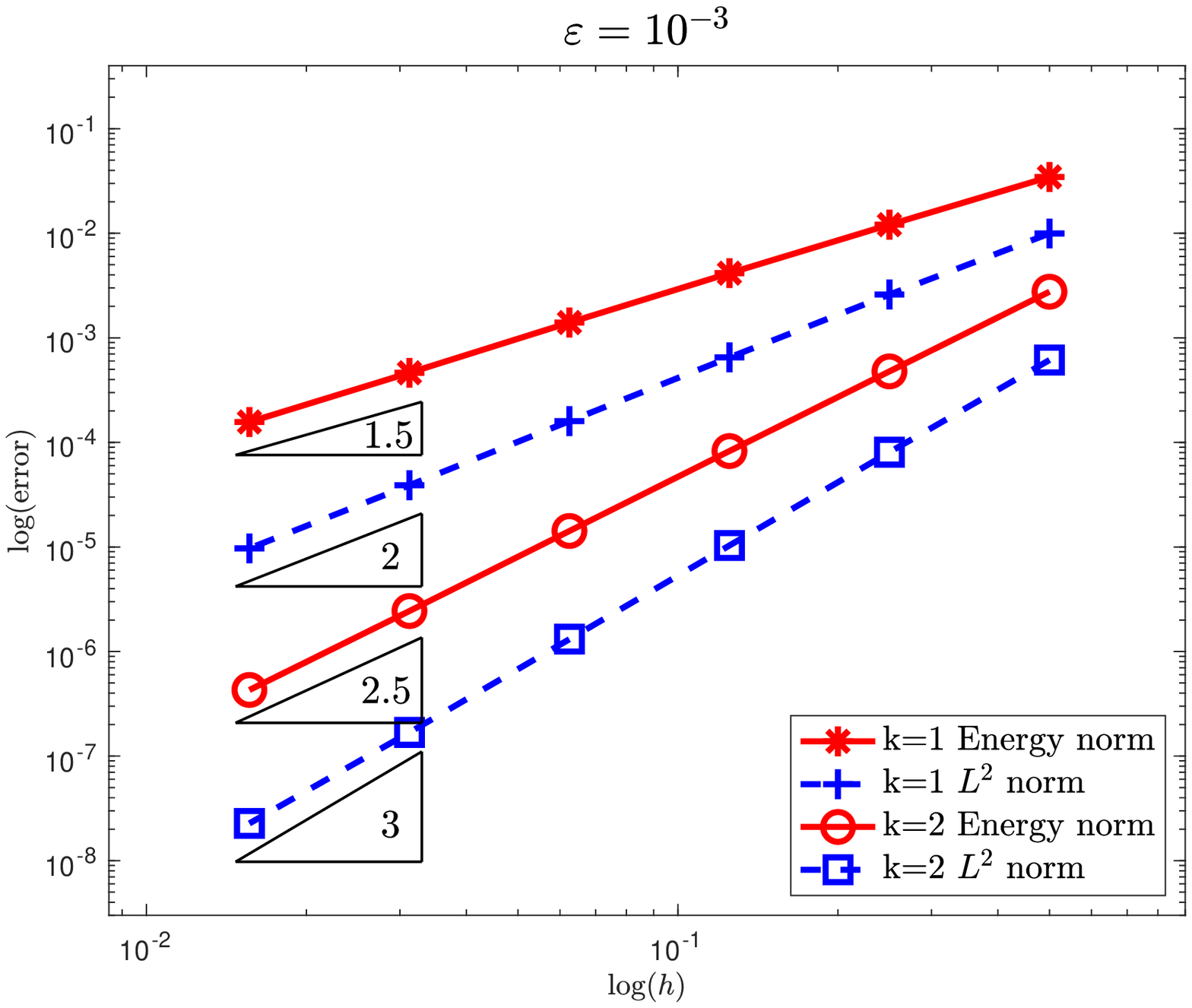}
	\includegraphics[width=.3\textwidth]{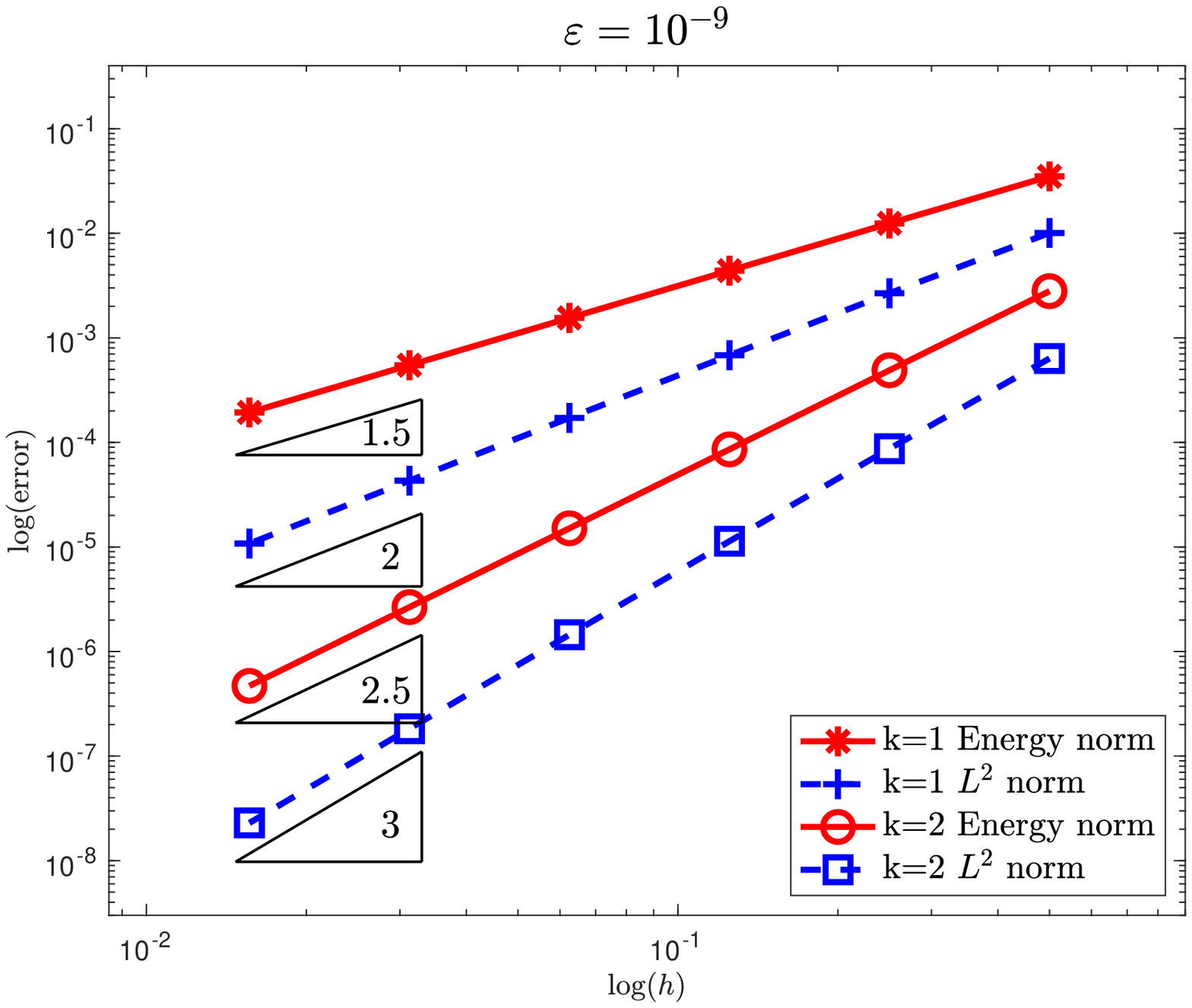}
	\caption{Experiment II. Convergence diagrams in the DG energy norm and $L^2$ norm for the 2D problem with a smooth solution.}\label{fig:2dorder}
\end{figure}

\subsection{Experiment III: Rotating flow} In this experiment, we compare the stabilization effect of different $\alpha$'s. The other parameters are set to be $\varepsilon=10^{-9}$, $\gamma=0$, $\bm{\beta}=[y-1/2,1/2-x]^T$, $\eta=\tau=10$, $\theta=1$ and $\alpha_d^\pm=\frac{1}{2}$. The solution is prescribed along the slit $1/2\times[0,1/2]$ as follows:
$$
\bm{u}(1/2,y)=(	\sin^2(2\pi y),\sin^2(2\pi y))^T, \quad y\in [0,1/2].
$$
In Figure \ref{fig:rot}, we plot the first component of approximation solution $u_1$ for various $\alpha$'s on a uniform mesh with $h=1/16$. As can be seen, in a stronger upwind setting, the continuity of the solution is improved and the oscillation is reduced. Meanwhile, the stronger upwind will lead to a larger $\alpha^{\max}$ that may pollute the constant in the error estimates.  That is, one needs to balance the numerical stability and accuracy when choosing $\alpha$.

\begin{figure}[!htbp]
	\centering
	\includegraphics[width=.3\textwidth]{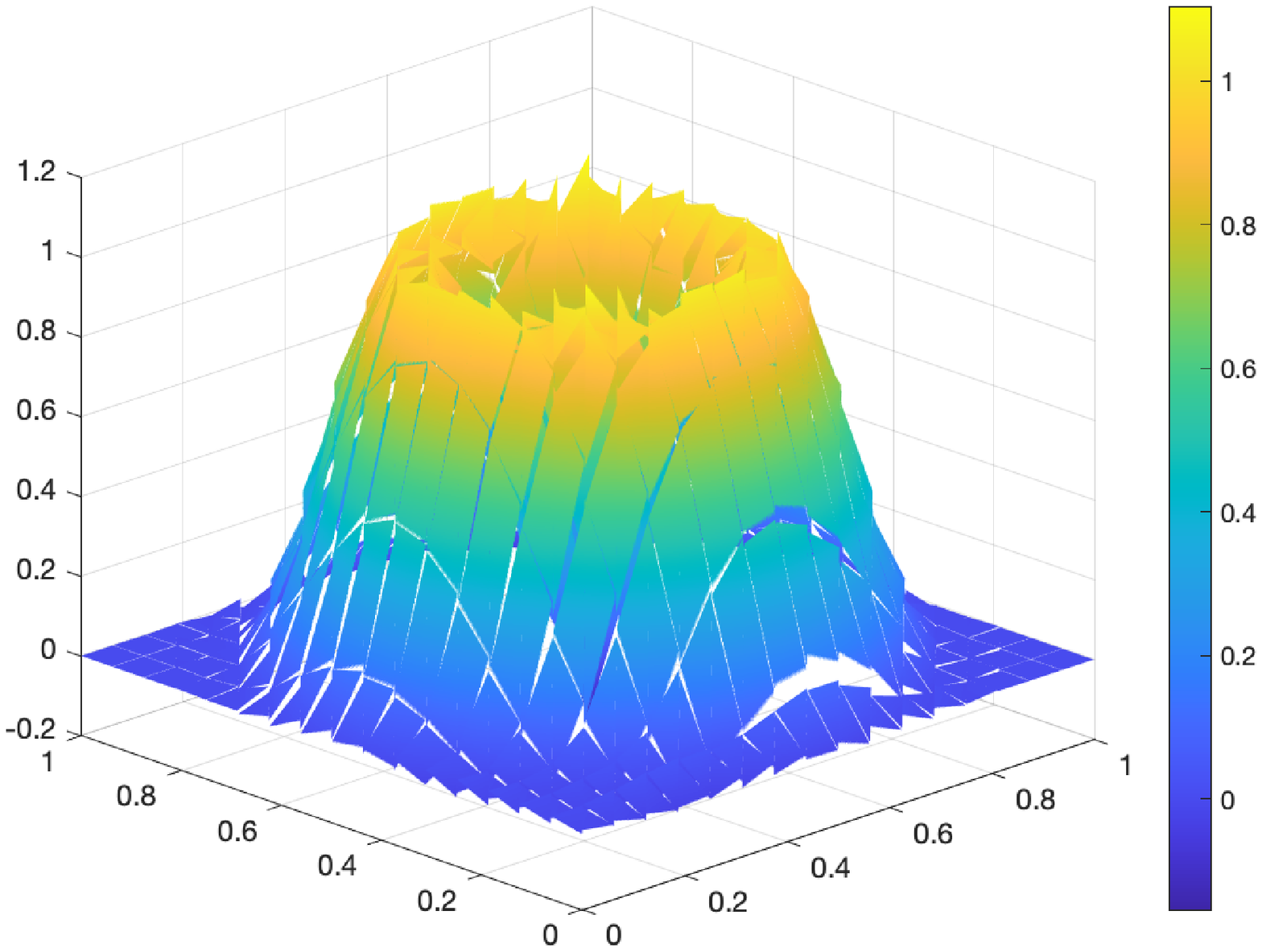}
	\includegraphics[width=.3\textwidth]{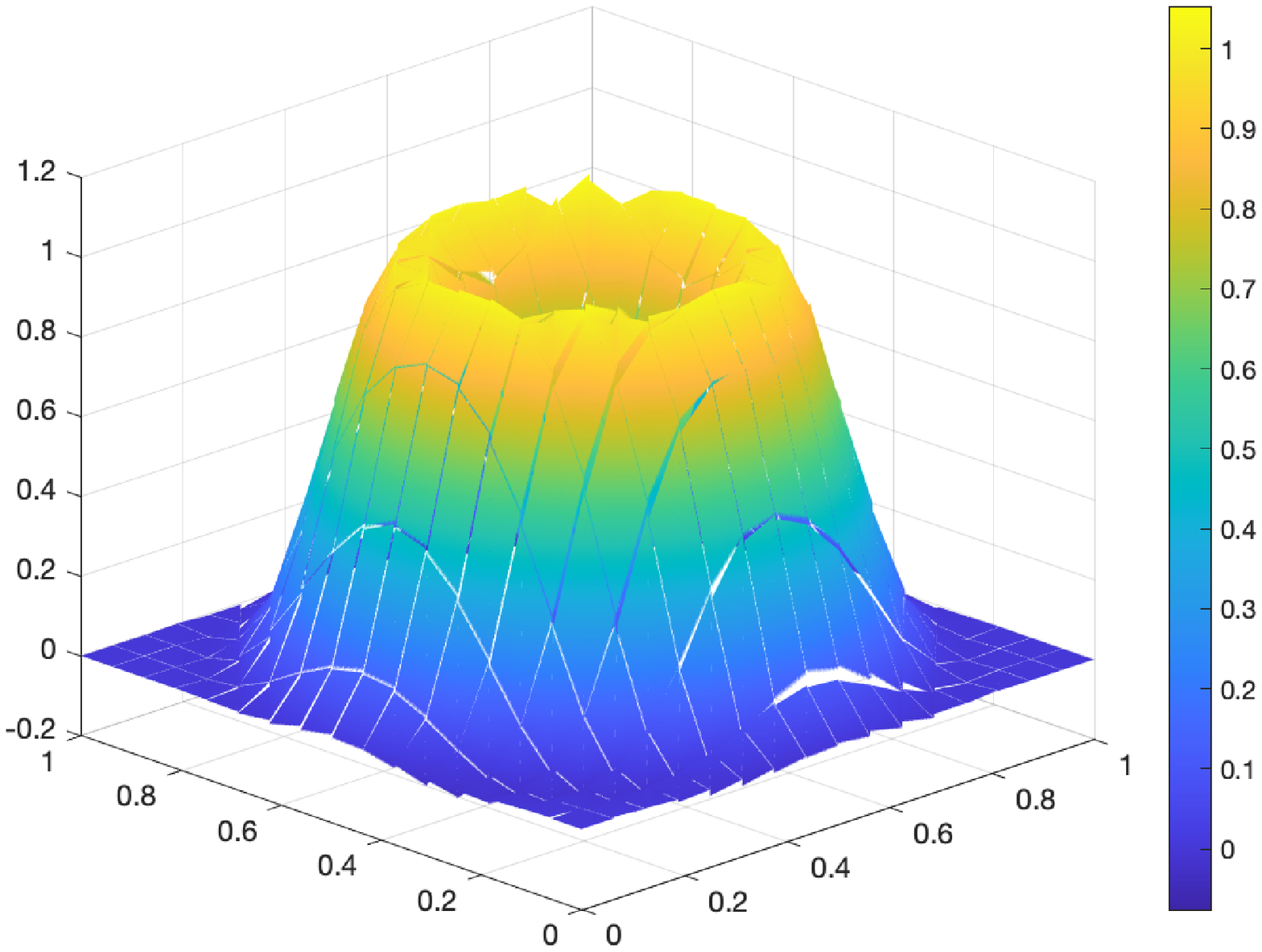}
	\includegraphics[width=.3\textwidth]{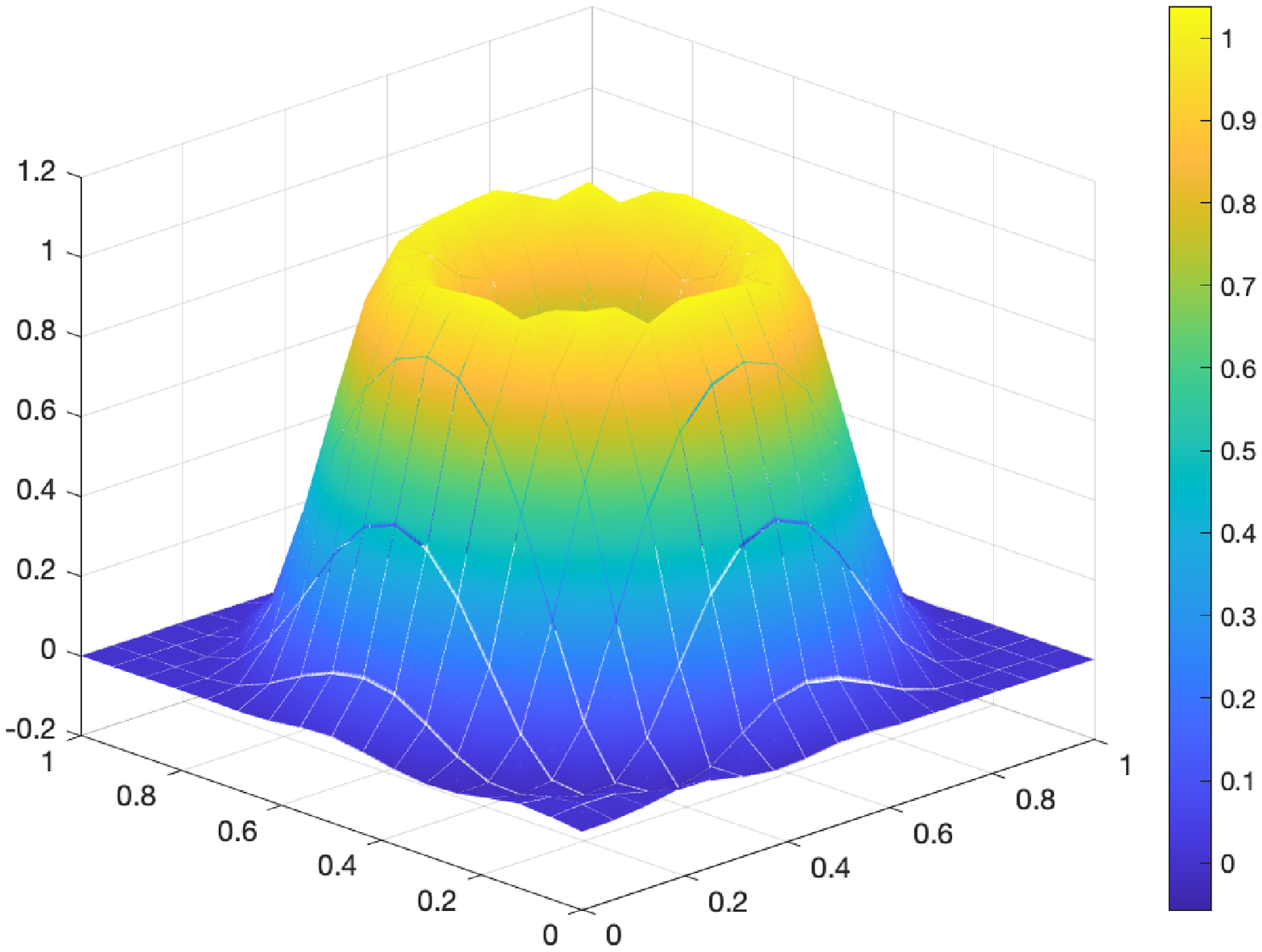}
	\caption{Experiment III. Approximation solution $u_1$ for $\varepsilon=10^{-9}$ with $h=1/16$ under weak upwind $\alpha^\pm=\frac{1+0.1{\rm sgn}(\bm{\beta}\cdot\bm{n}^\pm)}{2}$ (left), standard upwind $\alpha^\pm=\frac{1+{\rm sgn}(\bm{\beta}\cdot\bm{n}^\pm)}{2}$ (middle) and strong upwind $\alpha^\pm=\frac{1+10{\rm sgn}(\bm{\beta}\cdot\bm{n}^\pm)}{2}$ (right).}	\label{fig:rot}
\end{figure}

\subsection{Experiment IV: Internal layers}

The next example is devoted to assessing the performance of the DG method with different $\alpha_d$'s in the presence of interior layers. We set $\varepsilon=10^{-3}$, $\gamma=0$, $\bm{\beta}=[1/2,\sqrt{3}/2]^T$, $\bm{f}=(0,0)^T$, and Dirichlet boundary conditions for the following function:
$$
\bm{u}=\left\{
\begin{aligned}
	&(1,1)^T, \text{ on } \{y=0,0\le x\le 1\},\\
	&(1,1)^T, \text{ on } \{x=0,0\le y\le 0.2\},\\
	&(0,0)^T,\text{ elsewhere. }
\end{aligned}
 \right.
$$
In order to investigate the effect of $\alpha_d$, we consider the following two choices: $\alpha_{d}^\pm=\frac{1}{2}$ (standard) or $\alpha_d^\pm = \frac{1+\text{sgn}(\bm{\beta}\cdot \bm{n}^\pm)}{2}$ (upwind), under a weak upwind setting of $\alpha^\pm=\frac{1+0.1{\rm sgn}(\bm{\beta}\cdot\bm{n}^\pm)}{2}$ and $\tau=0.1$. In Figure \ref{fig:inlayer} , we plot the first component of the approximation solution. It is observed that the upwind setting of $\alpha_d$ induces a milder oscillation in the boundary area (in yellow) but performs a little bit worse in the smooth region. 
In this sense, we can use different strategies for $\alpha_d$ on different computation areas with some prior information to attain numerical solutions with better performance.
\begin{figure}[!htbp]
	\centering
	\includegraphics[width=.45\textwidth]{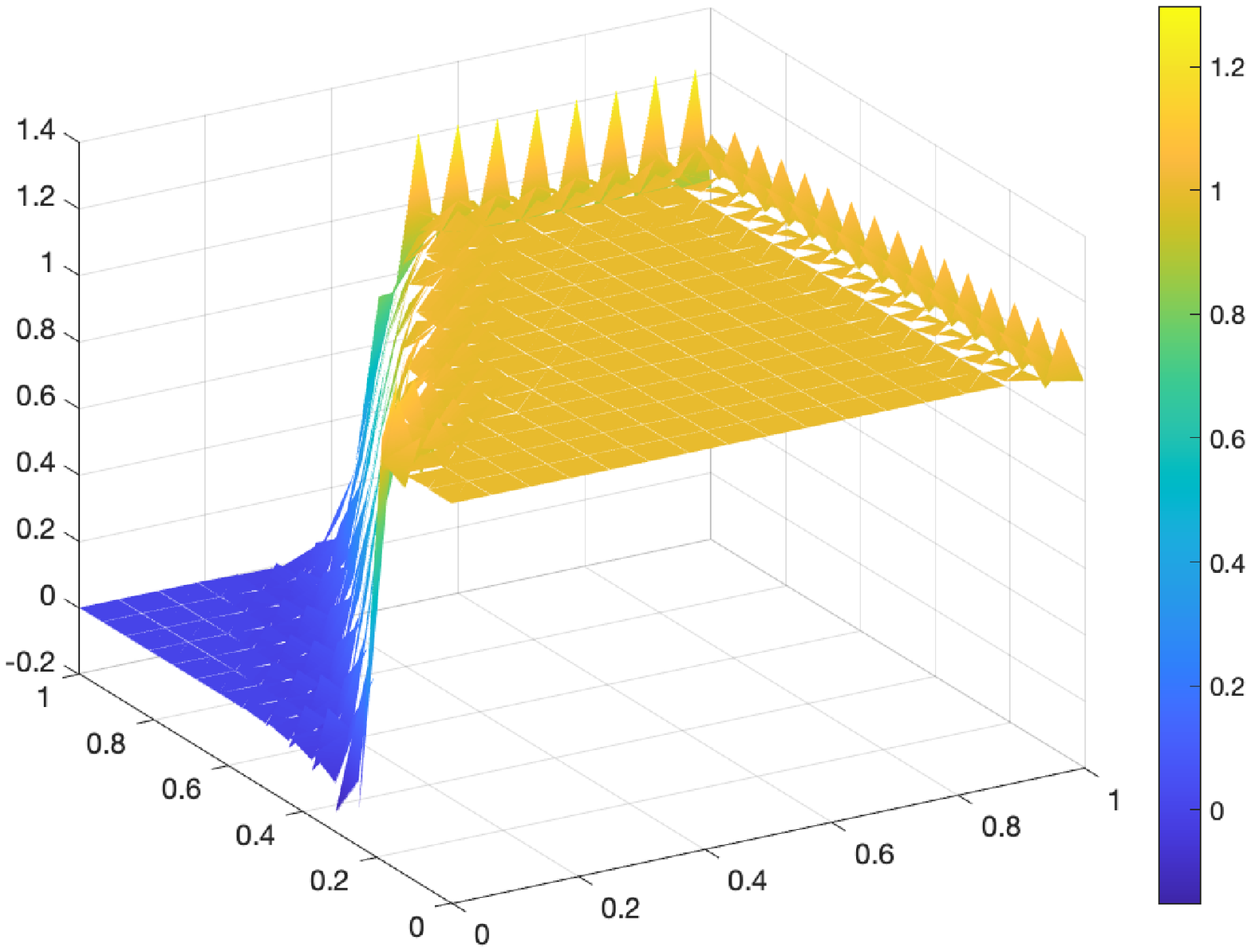}
	\includegraphics[width=.45\textwidth]{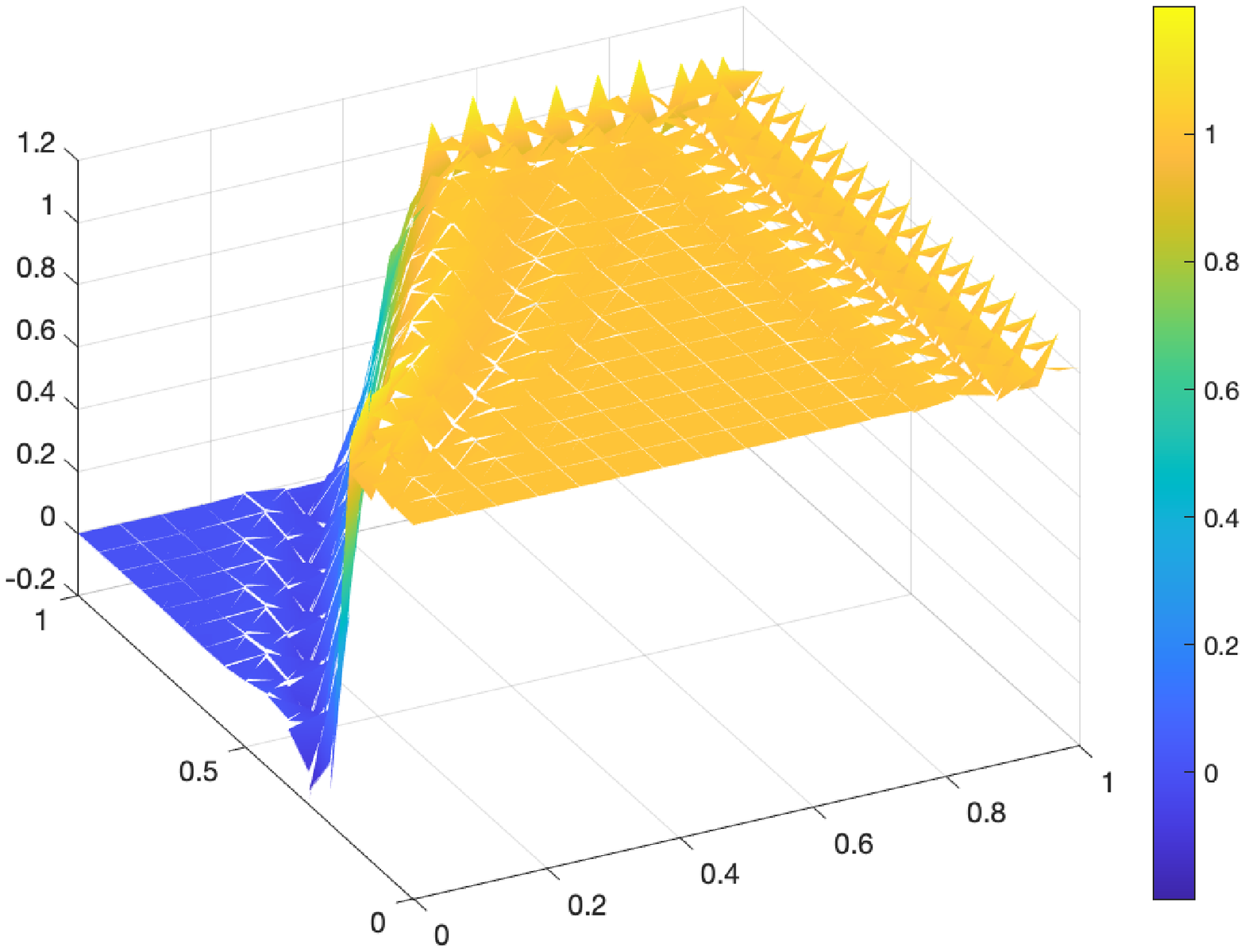}
	\caption{Experiment IV. Approximation solution $u_1$ for $\varepsilon=10^{-3}$ with $h=1/16$ under  standard average $\alpha^\pm_d=\frac{1}{2}$ (left) and upwind $\alpha^\pm_d=\frac{1+{\rm sgn}(\bm{\beta}\cdot\bm{n}^\pm)}{2}$ (right). }\label{fig:inlayer}
\end{figure}

\subsection{Experiment V: Boundary layers}
In this example, we apply the DG method to a boundary layer problem in 2D. The data are $\gamma=0$ and $\bm{\beta}=(1,2)^T$, and we vary the diffusion coefficient $\varepsilon$. The forcing term $\bm{f}$ is chosen to be $(1,1)^T$ and the homogeneous Dirichlet boundary condition is given.

As can be seen in Figure \ref{fig:bdy_layer}, the numerical solutions are stable in the sense that no spurious oscillation is observed with vanishing $\varepsilon$. Since the boundary conditions are imposed in a weak sense, the boundary layer is not captured by the DG approximations, which is also observed in the scalar case \cite{ayuso2009discontinuous}.
\begin{figure}
	\centering
	\includegraphics[width=.45\textwidth]{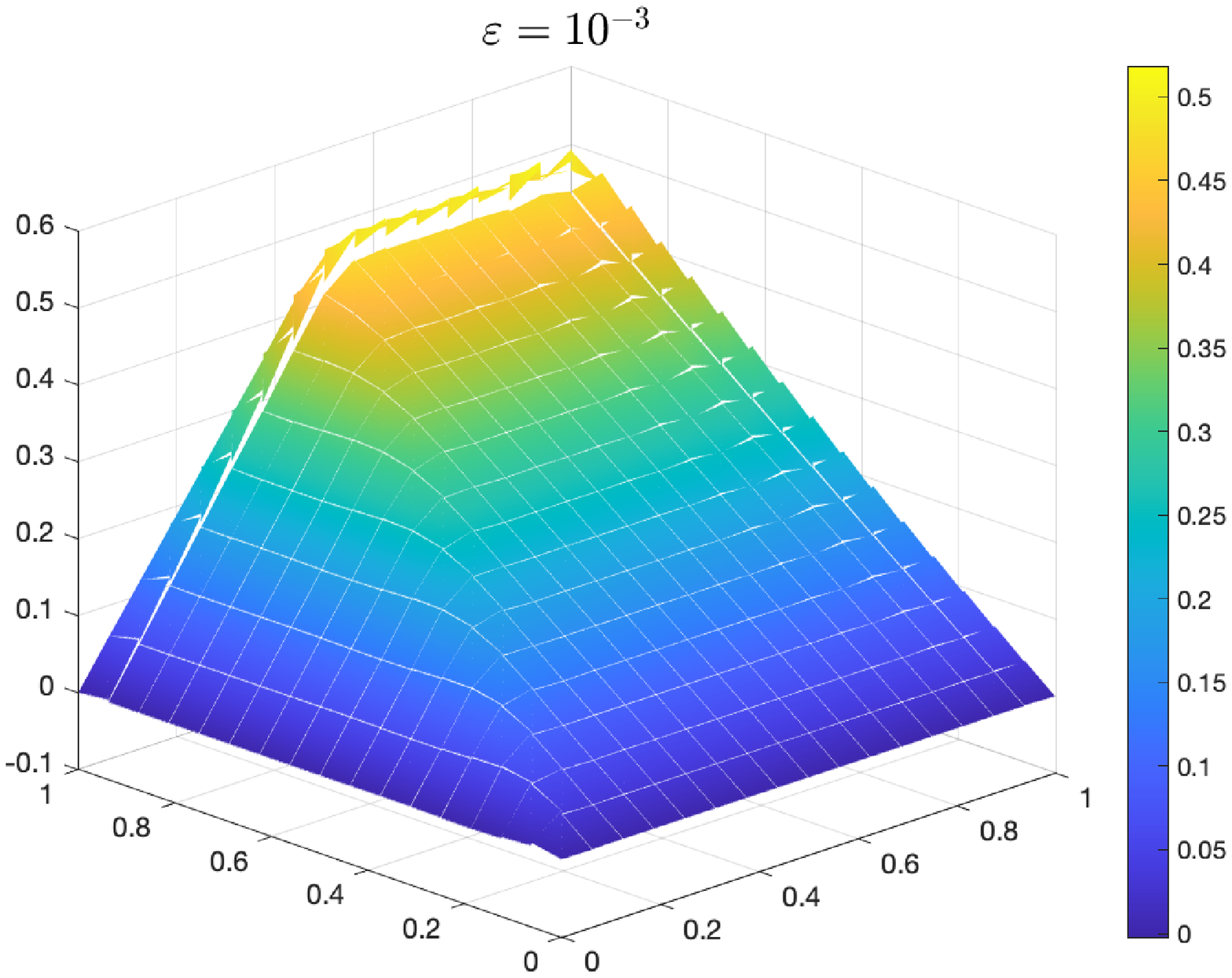}
	\includegraphics[width=.45\textwidth]{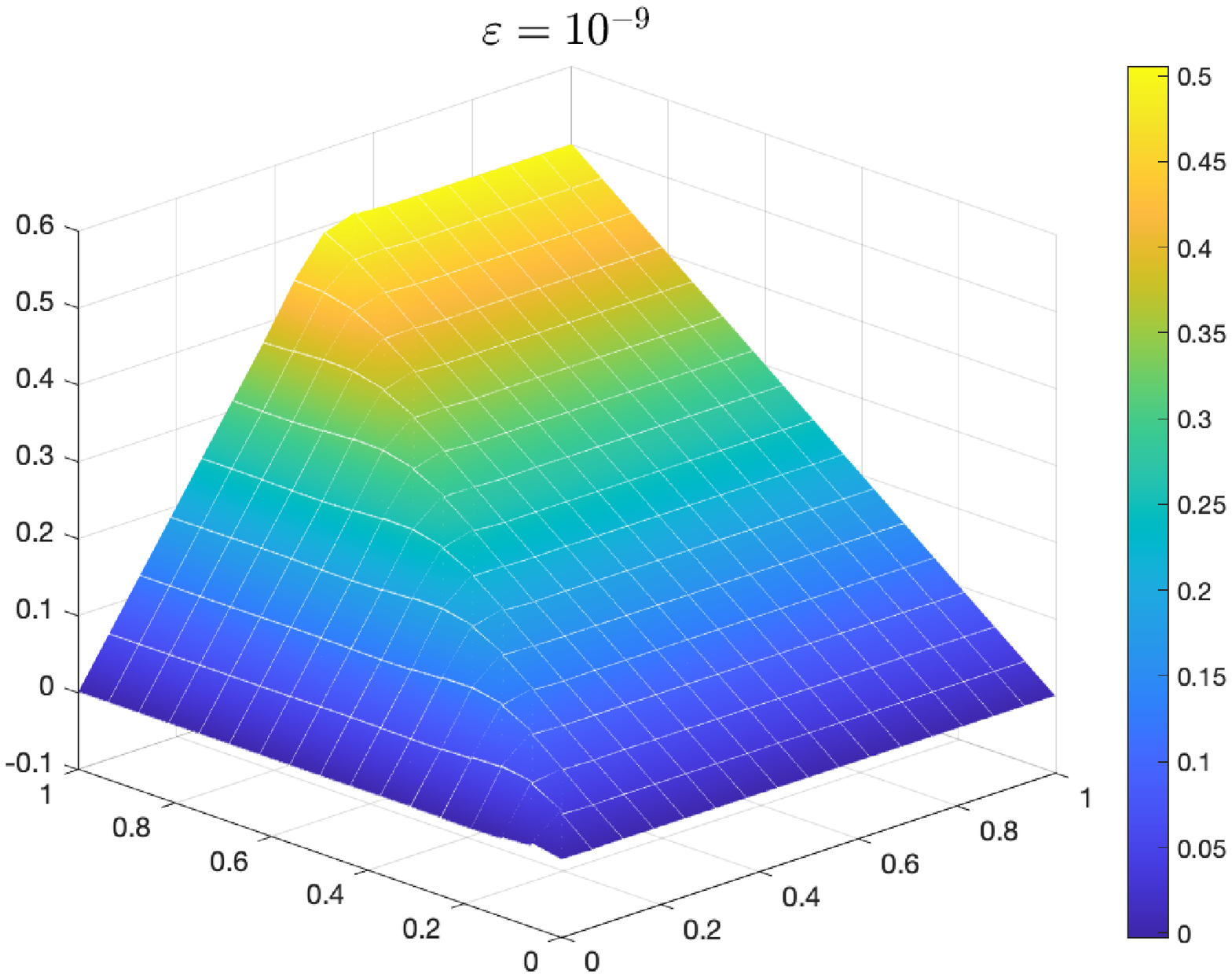}
	\caption{Experiment V. Approximation solution $u_1$ for $\varepsilon=10^{-3}$ (left) and  $\varepsilon=10^{-9}$ (right) with $h=1/16$ under standard upwind stabilization $\alpha^\pm=\frac{1+{\rm sgn}(\bm{\beta}\cdot\bm{n}^\pm)}{2}$ and $\alpha_d^\pm=\frac{1}{2}$ .}\label{fig:bdy_layer}
\end{figure}


\appendix
\section{Validation of Assumption \ref{as:normal-beta}} \label{as:normal-beta-proof}
We start from the case in which $\bm{\beta}=(\beta^1,\cdots,\beta^d)$ is a constant vector. For any element $T$, starting from an arbitrary vertex we have $d$ different edges of which the unit directions, denoted by $\bm{t}_1,\cdots,\bm{t}_d$, form a basis in $\mathbb{R}^d$. Therefore, $\bm{\beta}$ can be decomposed by this basis as $\bm{\beta}=\sum_{i=1}^{d}\alpha_i\bm{t}_i$ for some $\alpha_i$.  Without loss of generality, we assume that the $k$-th component of $\bm{\beta}$ obtains its infinity norm, i.e., $|\beta^k|=\|\bm{\beta}\|_{0,\infty,T}$ or 
$$
|\sum_{i=1}^{d}\alpha_it_i^k|=|\beta^k|=\|\bm{\beta}\|_{0,\infty,T},
$$
where $t_i^k$ is the $k$-th component of $\bm{t}_i$ and $t_i^k\le 1$ since $\bm{t}_i$ is a unit vector. Therefore, we can easily deduce that there exists at least one $\alpha_j$ such that $\alpha_j\ge \frac{\|\bm{\beta\|_{0,\infty,T}}}{d}$. Choose the facet $F$ that opposes the other vertex of the edge associated with $\bm{t}_j$ and there exists a positive constant $C$, depending only on the shape-regularity constant, such that 
$$
|\bm{n}_F\cdot \bm{t}_j |\ge C.
$$
Note that the edge of $\bm{t}_i ~(i\neq j)$ is on the facet $F$, we have $\bm{n}_F\cdot \bm{t_i}=0~(i\neq j)$, whence
$$
|\bm{\beta}\cdot \bm{n}_F|=|\sum_{i=1}^d\alpha_i\bm{t}_i\cdot \bm{n}_F|=|\alpha_j\bm{t}_j\cdot \bm{n}_F|\ge C\frac{\|\bm{\beta}\|_{0,\infty,T}}{d}=C\frac{\|\bm{\beta}\|_{0,\infty,F}}{d}.
$$
Therefore, the Assumption \ref{as:normal-beta} holds on the facet $F$.  For a variable $\bm{\beta}$, since $\bm{\beta} \neq \bm{0}$ and $\bm{\beta} \in \bm{W}^{1,\infty}(\Omega)$ from Assumption \ref{as:b0}, a simple perturbation argument leads to the Assumption \ref{as:normal-beta} for $h < h_0$, where $h_0 > 0$ is a fixed mesh size. 
\bibliographystyle{siamplain}
\bibliography{curl_DG.bib} 

\end{document}